\newtheorem{theorem}{Theorem}[section]
\newtheorem{lemma}[theorem]{Lemma}
\newenvironment{definition}{\medskip\noindent \textbf{Definition.}\rm}{\smallskip}
\newenvironment{remark}{\medskip\noindent \textbf{Remark.}\rm}{\smallskip}
\newenvironment{acknowledgement}{\medskip\noindent \textbf{Acknowledgement.}\rm}{\smallskip}
\numberwithin{equation}{section}
\thanks{The first author is supported in part
by SFB 701 of the German Research Council.  The second author is supported in part by
NSF grant DMS-1161622}
\subjclass[2010]{Primary 35J61. Secondary 58J05}
\keywords{Green's functions, Doob's transform, superharmonic functions, semilinear equations, 
Schr\"{o}dinger operators, weighted manifolds}
\def\RM{\rm}
\def\func#1{\mathop{\mathrm{#1}}\nolimits}
\def\limfunc#1{\mathop{\mathrm{#1}}}
\begin{document}
\title[Estimates of solutions]{Pointwise estimates of solutions to
semilinear elliptic equations and inequalities}
\author{Alexander Grigor'yan}
\address{Department of Mathematics, University of Bielefeld, 33501
Bielefeld, Germany}
\email{grigor@math.uni-bielefeld.de}
\author{Igor Verbitsky}
\address{Department of Mathematics, University of Missouri, Columbia, MO
65211, USA}
\email{verbitskyi@missouri.edu}
\date{\today }

\begin{abstract}
We obtain sharp pointwise estimates for positive solutions to the equation $%
-Lu+Vu^{q}=f$, where $L$ is an elliptic operator in divergence form, $q\in 
\mathbb{R}\setminus \{0\}$, $f\geq 0$ and $V$ is a function that may change
sign, in a domain $\Omega $ in $\mathbb{R}^{n}$, or in a weighted Riemannian
manifold.
\end{abstract}

\maketitle
\tableofcontents


\section{Introduction}

Consider the following elliptic differential equation%
\begin{equation}
-Lu+V\left( x\right) u^{q}=f  \label{uuq}
\end{equation}%
in an open connected set $\Omega \subseteq \mathbb{R}^{n}$, where $q$ is a
non-zero real number, 
\begin{equation}
L=\sum_{i,j=1}^{n}\partial _{x_{i}}\left( a_{ij}\left( x\right) \partial
_{x_{j}}\right)  \label{L}
\end{equation}%
is a divergence form elliptic operator with smooth coefficients $%
a_{ij}=a_{ji}$, $V$ and $f$ are continuous functions in $\Omega $, and $%
f\geq 0$, $f\not\equiv 0$. Note that $V\left( x\right) $ can be signed and
we do not impose any explicit boundary condition on $V$.

Assuming that $u$ is a nonnegative (or positive in the case $q<0$) solution,
our purpose is to obtain pointwise estimates of $u$ in terms of the function 
$h$ that is the minimal positive solution in $\Omega $ of the equation $%
-Lh=f.$ It is not obvious at all, that $u$ should satisfy any bound via $h$,
but nevertheless the following is true.

Assume that the Dirichlet Green function of $L$ in $\Omega $ exists and
denote it by $G^{\Omega }\left( x,y\right) $. Set%
\begin{equation*}
h\left( x\right) =\int_{\Omega }G^{\Omega }\left( x,y\right) f\left(
y\right) dy,
\end{equation*}%
and assume that $h\left( x\right) <\infty $ for all $x\in \Omega $ (note
also that $h\left( x\right) >0$ in $\Omega $), and that the integral%
\begin{equation}
\int_{\Omega }G^{\Omega }\left( x,y\right) h^{q}\left( y\right) V\left(
y\right) dy  \label{Gh}
\end{equation}%
is well-defined. Our main Theorem \ref{T1} states that the following
estimates hold for all $x\in \Omega $.

\begin{itemize}
\item[$\left( i\right) $] If $q=1$ then%
\begin{equation}
u\left( x\right) \geq h\left( x\right) \exp \left( -\frac{1}{h(x)}%
\int_{\Omega }G^{\Omega }\left( x,y\right) h\left( y\right) V\left( y\right)
dy\right) .  \label{uh}
\end{equation}

\item[$\left( ii\right) $] If $q>1$ then 
\begin{equation}
u(x)\geq \frac{h(x)}{\left[ 1+(q-1)\frac{1}{h(x)}\int_{\Omega }G^{\Omega
}\left( x,y\right) h^{q}\left( y\right) V\left( y\right) dy\right] ^{\frac{1%
}{q-1}}},  \label{uhq>1}
\end{equation}%
where the expression in square brackets is necessarily positive, that is,%
\begin{equation}
-(q-1)G^{\Omega }(h^{q}V)(x)<h(x) .  \label{condq>1}
\end{equation}

\item[$\left( iii\right) $] If $0<q<1$ then 
\begin{equation}
u(x)\geq h(x)\left[ 1-(1-q)\frac{1}{h(x)}\int_{\Omega ^{+}}G^{\Omega }\left(
x,y\right) h^{q}\left( y\right) V\left( y\right) dy\right] _{+}^{\frac{1}{1-q%
}},  \label{uhq<1}
\end{equation}%
where 
\begin{equation*}
\Omega ^{+}=\{x\in \Omega :\,\,u(x)>0\}.
\end{equation*}%
In this case we assume that the integral in (\ref{uhq<1}) is well-defined
instead of (\ref{Gh}).

\item[$\left( iv\right) $] If $q<0$, $u>0$ in $\Omega $, and in addition $%
u\left( y\right) \rightarrow 0$ as $y\rightarrow \partial \Omega $ or $%
\left\vert y\right\vert \rightarrow \infty ,$ then (\ref{condq>1}) holds and 
\begin{equation}
u(x)\leq h(x)\left[ 1-(1-q)\frac{1}{h(x)}\int_{\Omega }G^{\Omega }\left(
x,y\right) h^{q}\left( y\right) V\left( y\right) dy\right] ^{\frac{1}{1-q}}.
\label{uhq<0}
\end{equation}
\end{itemize}

Let us emphasize that in the case $\left( iv\right) $ we obtain an upper
bound for $u$ in contrast to the lower bound in the cases $\left( i\right) $-%
$\left( iii\right) $.

In fact, Theorem \ref{T1} holds in much higher generality, when $\Omega $ is
any open subset of any weighted Riemannian manifold, $L$ is the associated
weighted Laplace operator, and equation (\ref{uuq}) can be replaced by an
inequality.

Equation (\ref{uuq}) and its generalizations have attracted attention of
many authors, investigating various aspects from the existence of positive
solutions to pointwise estimates (see, for example, \cite{Ag}, \cite{AS}, 
\cite{BO}, \cite{CZ}, \cite{JMV}, \cite{H}, \cite{HN}, \cite{Mu}, \cite{MV}, 
\cite{P}, \cite{V}, etc). There is no possibility to give a detailed
overview of the literature on this subject, which would have required a full
size survey. We restrict our attention here to those earlier results that
are most closely related to ours.

In the case $q=1$ estimate (\ref{uh}) was known before and is included here
for the sake of completeness. For $V\geq 0$ (\ref{uh}) was proved by Hansen
and Ma \cite[Prop. 1.9]{HM} using the tools of potential theory (see also 
\cite{GH}). For $V\leq 0$ in domains $\Omega $ with boundary Harnack
principle estimate (\ref{uh}) as well a matching upper estimate for $u$ were
obtained in \cite{FV}, \cite{FNV} using a completely different method (but
without sharp constants).

For a general signed $V$ in a relatively compact $\Omega $ estimate (\ref{uh}%
) can be obtained using the Feynman-Kac formula for Brownian motion and
Jensen's inequality. This type of argument was implicit in \cite{AS}, \cite%
{CZ}, \cite[Prop. 2.5]{HZ}. In the form (\ref{uh}) it was stated in \cite%
{GH1}. However, neither the Feynman-Kac formula nor any of the cited above
previous methods allows to treat the nonlinear case $q\neq 1.$

In the case $q>1$ and $V\leq 0$ Kalton and the second author obtained in 
\cite{KV} the necessary condition (\ref{condq>1}), although without a sharp
constant, and gave also a sufficient condition 
\begin{equation}
-G^{\Omega }(h^{q}V)(x)\leq \left( 1-\frac{1}{q}\right) ^{q}\frac{1}{q-1}h(x)
\label{scond}
\end{equation}%
for the existence of a positive solution. Moreover, under (\ref{scond}) they
obtained a two-sided estimate $u\simeq h$ for the minimal positive solution $%
u$ of (\ref{uuq}) in any domain $\Omega $ with the boundary Harnack
principle (the sign $\simeq $ means that the ratio of both sides is bounded
from above and below by positive constants).

In the case $q>1$, $V\leq 0$, and $L=\Delta $, Brezis and Cabr\'{e} \cite{BC}
obtained the sharp necessary condition (\ref{condq>1}) for the existence of
a positive solution in an arbitrary bounded domain $\Omega \subset \mathbb{R}%
^{n}, $ as well as the estimate $u\simeq h$ under (\ref{scond}). The proof
of the necessary condition (\ref{condq>1}) in \cite[Lemma 5.3]{BC} is based
on a direct computation using the explicit form $\Delta
=\sum_{i=1}^{n}\partial _{x_{i}}^{2}$ of the Laplace operator. A much more
expanded version of this computation will appear in our proof in Section \ref%
{SecAux} below.

The case $q>1$, $V\equiv 1$, $f\equiv 0$ has been extensively studied, and
we do not touch it here; we refer the reader to \cite{Dy} and \cite{MV} as
well as to the references therein.

In the case $0<q<1$, $V\leq 0,$ and $L=\Delta $, Brezis and Kamin \cite{BK}
obtained necessary and sufficient conditions for the existence of a bounded,
positive solution of (\ref{uuq}) in $\mathbb{R}^{n}$ and obtained certain
pointwise bounds. Their lower bound is covered by our Theorem \ref{main-thm3}
below (see also \cite{DV1}, \cite{DV2}).

In the case $q<0$ \cite{DGR}, \cite{GhR} obtained a sharp sufficient
condition for the existence of a positive solution of (\ref{uuq}) in the
specific case where $V\left( x\right) $ depends only on the distance from $x$
to $\partial \Omega $ and has a constant sign.

In the present paper we give a unified approach for treating all the values
of $q\in \mathbb{R}\setminus \left\{ 0\right\} $, a general signed potential 
$V$, and a general divergence form operator $L$, not only in arbitrary
domains of $\mathbb{R}^{n}$, but also on an arbitrary Riemannian manifold.
Our estimates $\left( i\right) $-$\left( iv\right) $ are new in this
generality. In many cases these estimates happen to be sharp as one can see
in examples in Section \ref{examples}.

Let us briefly describe the idea of our proof. Assume for simplicity $%
L=\Delta $. Let $\left\{ \Omega _{k}\right\} _{k=1}^{\infty }$ be an
exhaustion of $\Omega $ by relatively compact open sets $\Omega _{k}\subset
\Omega $ with smooth boundaries. We obtain first appropriate estimates for $%
u $ in each $\Omega _{k}$ and then pass to the limit as $k\rightarrow \infty 
$. Define in $\Omega _{k}$ a new function $h$ as the solution of the
following boundary value problem%
\begin{equation*}
\left\{ 
\begin{array}{ll}
-\Delta h=f, & \text{in }\Omega _{k}, \\ 
h=u, & \text{on }\partial \Omega _{k}.%
\end{array}%
\right.
\end{equation*}%
The following argument is used in the proof of Theorem \ref{T2} that treats (%
\ref{uuq}) in relatively compact domains with the Dirichlet boundary
condition. Assume first that $h\equiv 1$ (and then $f=0$ in $\Omega _{k}$).
Fix a $C^{2} $ function $\phi $ on $\mathbb{R}$ (or on an interval in $%
\mathbb{R}$) with $\phi ^{\prime }>0$ and consider the substitution 
\begin{equation*}
v=\phi ^{-1}\left( u\right) .
\end{equation*}%
By the chain rule we have%
\begin{equation*}
\Delta u=\Delta \phi \left( v\right) =\phi ^{\prime }(v)\Delta v+\phi
^{\prime \prime }(v)|\nabla v|^{2},
\end{equation*}%
which implies%
\begin{eqnarray}
-\Delta v+V\frac{\phi (v)^{q}}{\phi ^{\prime }(v)} &=&-\frac{\Delta u-\phi
^{\prime \prime }\left\vert \nabla v\right\vert ^{2}}{\phi ^{\prime }}+V%
\frac{\phi (v)^{q}}{\phi ^{\prime }(v)}  \notag \\
&=&-\frac{V\phi \left( v\right) ^{q}-\phi ^{\prime \prime }\left\vert \nabla
v\right\vert ^{2}}{\phi ^{\prime }}+V\frac{\phi (v)^{q}}{\phi ^{\prime }(v)}
\notag \\
&=&\frac{\phi ^{\prime \prime }}{\phi ^{\prime }}\left\vert \nabla
v\right\vert ^{2}.  \label{3}
\end{eqnarray}%
Now we choose $\phi $ to solve the initial value problem%
\begin{equation*}
\phi ^{\prime }\left( s\right) =\phi ^{q}\left( s\right) ,\ \ \phi \left(
0\right) =1,
\end{equation*}%
and obtain:%
\begin{equation*}
\phi \left( s\right) =\left\{ 
\begin{array}{ll}
e^{s}, & q=1, \\ 
\lbrack (1-q)s+1]^{\frac{1}{1-q}}, & q\neq 1,%
\end{array}%
\right.
\end{equation*}%
in the appropriate domains. In the case $q>0$ the function $\phi $ is
convex, and we obtain from (\ref{3})%
\begin{equation}
-\Delta v+V\geq 0.  \label{vv}
\end{equation}%
Since on $\partial \Omega _{k}$ we have $v=\phi ^{-1}\left( u\right) =\phi
^{-1}\left( 1\right) =0$, we obtain from (\ref{vv}) by the maximum principle
that%
\begin{equation*}
v\left( x\right) \geq -\int_{\Omega _{k}}G^{\Omega _{k}}\left( x,y\right)
V\left( y\right) dy.
\end{equation*}%
Applying $\phi $ to both sides of this inequality gives an appropriate
inequality for $u=\phi \left( v\right) $ in $\Omega _{k}.$

In the case $q<0$ the function $\phi $ is concave, which leads to the
opposite inequality for $v$ and, hence, for $u$.

In the case of a general function $h$, consider a so-called $h$-transform
(or Doob's transform \cite{Do}) of $\Delta $:%
\begin{equation*}
\Delta ^{h}=\frac{1}{h}\circ \Delta \circ h=\frac{1}{h^{2}}\func{div}\left(
h^{2}\nabla \right) +\frac{\Delta h}{h},
\end{equation*}%
and the function $\widetilde{u}=\frac{u}{h}$. Then $\widetilde{u}$ solves
the equation%
\begin{equation*}
-\Delta ^{h}\widetilde{u}+h^{q-1}V\widetilde{u}^{q}=-\frac{\Delta h}{h}
\end{equation*}%
with the boundary value $\widetilde{u}=1$ on $\partial \Omega _{k}$.
Effectively the $h$-transform provides a reduction to the previous case, but
for the operator $\Delta ^{h}$ in place of $\Delta $. The part $\frac{1}{%
h^{2}}\func{div}\left( h^{2}\nabla \right) $ of this operator is a weighted
Laplace operator, for which the same computation (\ref{3}) using the chain
rule works as for $\Delta $. The part $\frac{\Delta h}{h}$ gives in the end
an additional term%
\begin{equation*}
\frac{\Delta h}{h}\left( \frac{\phi (v)-1}{\phi ^{\prime }(v)}-v\right)
\end{equation*}%
on the right hand side of (\ref{vv}) (cf. Lemma \ref{ident}). In the case $%
q>1$ we obtain by the convexity of $\phi $ that the expression in
parentheses is non-positive. Since $\Delta h=-f\leq 0$, the above term is
non-negative which allows us to use the same argument as above. In the case $%
q<1$ this term is non-positive, which gives again a correct sign in the
corresponding inequality.

The actual proof goes a bit differently as we have to overcome one more
difficulty -- a possibility of $h$ vanishing on the boundary, which we have
ignored in the above sketch (see Sections \ref{SecProofB}, \ref{SecT1}).

The above argument allows a version that treats the case $f=0$ in (\ref{uuq}%
) -- see Theorem \ref{T3}.

In Theorem \ref{C4} we provide complementary results: sufficient conditions
for the existence of a positive solution $u$ and two-sided estimates of $u$.
Finally, Theorem \ref{T4} is an abstract version of Theorem \ref{C4} for
solutions of integral equations.

The structure of the paper is as follows. In Section \ref{SecW} we briefly
describe the notion of the weighted manifold and the associated Laplace
operator. In Section \ref{SecMain} we state our main results: Theorems \ref%
{T1}, \ref{T2}, \ref{T3}, \ref{C4} and \ref{T4}. In Section \ref{SecAux} we
prove some Lemmas, in particular containing the aforementioned computation (%
\ref{3}) in the general case. In Section \ref{SecProofB}-\ref{SecProofE} we
prove the above mentioned theorems. In Section \ref{examples} we give some
examples.

\begin{acknowledgement}
The authors are grateful to Alexander Bendikov, Haim Brezis, and Wolfhard
Hansen for stimulating conversations on the subject of this paper.

The second author would like to thank the Mathematics Department at
Bielefeld University for its hospitality and support.
\end{acknowledgement}

\section{Weighted manifolds}

\label{SecW}Let $M$ be a smooth Riemannian manifold with the Riemannian
metric tensor $g=\left( g_{ij}\right) $. The associated Laplace-Beltrami
operator $\mathcal{L}_{0}$ acts on $C^{2}$ functions $u$ on $M$ and is given
in any chart $x_{1},...,x_{n}$ by the formula%
\begin{equation*}
\mathcal{L}_{0}u=\frac{1}{\sqrt{\det g}}\sum_{i,j=1}^{n}\partial
_{x_{i}}\left( \sqrt{\det g}g^{ij}\partial _{x_{j}}u\right)
\end{equation*}%
where $\det g$ is the determinant of the matrix $g=\left( g_{ij}\right) $,
and $\left( g^{ij}\right) $ is the inverse matrix of $\left( g_{ij}\right) $%
. The Riemannian measure $m_{0}$ is given in the same chart by%
\begin{equation*}
dm_{0}=\sqrt{\det g}dx_{1}...dx_{n},
\end{equation*}%
so that $\mathcal{L}_{0}$ is symmetric with respect to $m_{0}.$ Using the
gradient operator $\nabla $ defined by 
\begin{equation*}
\left( \nabla u\right) ^{i}=\sum_{j=1}^{n}g^{ij}\partial _{x_{j}}u
\end{equation*}%
and the divergence $\func{div}$ on vector fields $F^{i}$%
\begin{equation*}
\func{div}F=\frac{1}{\sqrt{\det g}}\sum_{i=1}^{n}\partial _{x_{i}}\left( 
\sqrt{\det g}F^{i}\right) ,
\end{equation*}%
one represents $\mathcal{L}_{0}$ in the form 
\begin{equation*}
\mathcal{L}_{0}=\func{div}\circ \nabla .
\end{equation*}

Let $\omega $ be a smooth positive function on $M$ and consider the measure $%
m$ on $M$ given by 
\begin{equation*}
dm=\omega dm_{0}.
\end{equation*}%
The couple $\left( M,m\right) $ is called a weighted manifold or a manifold
with density, and $\omega $ in this context is called a weight. The
following operator $\mathcal{L}$ 
\begin{equation}
\mathcal{L}u:=\frac{1}{\omega }\func{div}\left( \omega \nabla u\right) =%
\frac{1}{\omega \sqrt{\det g}}\sum_{i,j=1}^{n}\partial _{x_{i}}\left( \omega 
\sqrt{\det g}g^{ij}\partial _{x_{j}}u\right)  \label{Lw}
\end{equation}%
acting on $C^{2}$ functions $u$ on $M$, is called the (weighted) Laplace
operator of $\left( M,m\right) $. It is easy to see that $\mathcal{L}$ is
symmetric with respect to measure $m$.

Of course, for $\omega =1$ we have $\mathcal{L}=\mathcal{L}_{0}$. For a
general weight $\omega $, define the weighted divergence by 
\begin{equation*}
\func{div}_{\omega }=\frac{1}{\omega }\circ \func{div}\circ \omega
\end{equation*}%
and obtain 
\begin{equation*}
\mathcal{L}=\func{div}_{\omega }\circ \nabla .
\end{equation*}%
Note that $\nabla $ remains the Riemannian gradient and does not depend on
the weight $\omega $.

It is easy to show that the weighted Laplace operator $\mathcal{L}$
satisfies the same product and chain rules as the classical Laplace operator
(cf. \cite[Section 3.6]{G1}). Namely, for two $C^{2}$ functions $u,v $ on $M$
we have%
\begin{equation}
\mathcal{L}\left( uv\right) =u\mathcal{L}v+2\langle \nabla u,\nabla v\rangle
+v\mathcal{L}u  \label{product}
\end{equation}%
where $\langle \nabla u,\nabla v\rangle $ is the inner product of the
Riemannian gradients, which is independent of the weight $\omega $. Also,
for any $C^{2}$ function $\phi $ defined on $u\left( M\right) $ we have%
\begin{equation}
\mathcal{L}\phi \left( u\right) =\phi ^{\prime }\left( u\right) \mathcal{L}%
u+\phi ^{\prime \prime }\left( u\right) \left\vert \nabla u\right\vert ^{2}.
\label{chain}
\end{equation}

As an example, consider in an open set $\Omega \subseteq \mathbb{R}^{n}$ the
following operator%
\begin{equation}
Lu=b\left( x\right) \sum_{i,j=1}^{n}\partial _{x_{i}}\left( a_{ij}\left(
x\right) \partial _{x_{j}}u\right) ,  \label{Lba}
\end{equation}%
where $b,a_{ij}$ are smooth functions, $b>0$ and $a_{ij}=a_{ji}$. Assume
that $L$ is elliptic, that is, the matrix $\left( a_{ij}\left( x\right)
\right) $ is positive definite for any $x$ (the uniform ellipticity is not
assumed). Then $L$ coincides with the weighted Laplace operator $\mathcal{L}$
of $\mathbb{R}^{n}$ with the Riemannian metric $g$ and weight $\omega $
given by%
\begin{equation*}
\left( g^{ij}\right) =b\left( a_{ij}\right) ,\ \ \ \ \omega =b^{\frac{n}{2}%
-1}\sqrt{\det a},
\end{equation*}%
where $a=\left( a_{i_{j}}\right) $. Indeed, it follows that%
\begin{equation*}
\det g=\det \left( g_{ij}\right) =\frac{1}{b^{n}\det a},
\end{equation*}%
and substitution into (\ref{Lw}) yields 
\begin{eqnarray*}
\mathcal{L}u &=&\frac{\sqrt{b^{n}\det a}}{b^{\frac{n}{2}-1}\sqrt{\det a}}%
\sum_{i,j=1}^{n}\partial _{x_{i}}\left( b^{\frac{n}{2}-1}\sqrt{\det a}\frac{1%
}{\sqrt{b^{n}\det a}}ba^{ij}\partial _{x_{j}}u\right) \\
&=&b\sum_{i,j=1}^{n}\partial _{x_{i}}\left( a_{ij}\left( x\right) \partial
_{x_{j}}u\right) =Lu.
\end{eqnarray*}%
The measure $m$ associated with $\mathcal{L}$ is given by%
\begin{equation}
dm=\omega \sqrt{\det g}=b^{\frac{n}{2}-1}\sqrt{\det a}\frac{1}{\sqrt{%
b^{n}\det a}}=\frac{1}{b}dx,  \label{dm}
\end{equation}%
where $dx$ is Lebesgue measure.

Therefore, all the results that we obtain for a general weighted manifold $%
\left( M,m\right) $, apply to the operator (\ref{Lba}) in a domain of $%
\mathbb{R}^{n}$ with the measure $m$ from (\ref{dm}). In particular, if $%
b\equiv 1$ as was assumed in the Introduction, then $L$ is given by (\ref{L}%
) and $m$ is Lebesgue measure.

\section{Statements of the main results}

\label{SecMain}For any open connected set $\Omega \subseteq M$ denote by $%
G^{\Omega }\left( x,y\right) $ the infimum of all positive fundamental
solutions of $\mathcal{L}$ in $\Omega $. The following dichotomy is true:
either $G^{\Omega }\left( x,y\right) \equiv \infty $ or $G^{\Omega }\left(
x,y\right) <\infty $ for all $x\neq y$. In the latter case we say that $%
G^{\Omega }$ is finite. If $G^{\Omega }$ is finite then $G^{\Omega }$ is the
symmetric positive Green function of $\mathcal{L}$ in $\Omega $ (see \cite{G}
and \cite[Ch.13]{G1}). If $\Omega $ is relatively compact then $G^{\Omega }$
is finite and satisfies the Dirichlet boundary condition on the regular part
of $\partial \Omega $.

If $G^{\Omega }$ is finite then, for any function $f\in L_{loc}^{1}\left(
\Omega, m\right) $, set%
\begin{equation*}
G^{\Omega }f\left( x\right) =\int_{\Omega }G^{\Omega }\left( x,y\right)
f\left( y\right) dm\left( y\right) ,
\end{equation*}%
where in the case $f\geq 0$ the integral is understood in the sense of
Lebesgue; for a signed $f$ the integral is understood as follows:%
\begin{equation*}
G^{\Omega }f\left( x\right) =G^{\Omega }f_{+}\left( x\right) -G^{\Omega
}f_{-}\left( x\right)
\end{equation*}%
(where $f_{+}=\max \left( f,0\right) $ and $f_{-}=\max \left( -f,0\right) $%
), assuming that at least one of the values $G^{\Omega }f_{+}\left( x\right) 
$, $G^{\Omega }f_{-}\left( x\right) $ is finite. In this case we say that $%
G^{\Omega }f\left( x\right) $ is well-defined.

Note that if $f\geq 0$ in $\Omega $ and $f>0$ on a set of positive measure
then $G^{\Omega }f>0$ in $\Omega $.

If $\Omega $ is relatively compact then $G^{\Omega }\left( x,\cdot \right)
\in L^{1}\left( \Omega \right) $, which implies that $G^{\Omega }f$ is
finite for any $f\in L^{\infty }\left( \Omega \right) $. For arbitrary $%
\Omega $ it is still true that $G^{\Omega }(x,\cdot )\in L_{\mathrm{loc}%
}^{1}(\Omega )$ for every $x\in \Omega $.

Denote by $\partial _{\infty }M$ the infinity point of the one-point
compactification of $M$ (see for example \cite[Sec. 5.4.3]{G1}). For any
open subset $\Omega \subseteq M$ denote by $\partial _{\infty }\Omega $ the
union of $\partial \Omega $ and $\partial _{\infty }M$, if $\Omega $ is not
relatively compact, and set $\partial _{\infty }\Omega =\partial \Omega $ if 
$\Omega $ is relatively compact.

\begin{definition}
For a function $u$ defined in $\Omega \subseteq M$ let us write%
\begin{equation}
\lim_{y\rightarrow \partial _{\infty }\Omega }u\left( y\right) =0,
\label{lu}
\end{equation}%
if $\lim_{k\rightarrow \infty }u\left( y_{k}\right) =0$ for any sequence $%
\left\{ y_{k}\right\} $ in $\Omega $ that converges to a point of $\partial
_{\infty }\Omega $; the latter means, that either $\left\{ y_{k}\right\} $
converges to a point on $\partial \Omega $ or diverges to $\partial _{\infty
}M$. In the same way we understand similar equalities and inequalities
involving $\limsup $ and $\liminf .$
\end{definition}

For example, if $\Omega $ is relatively compact, then (\ref{lu}) means that $%
\lim u\left( y_{k}\right) =0$ for any sequence $\left\{ y_{k}\right\} $
converging to a point on $\partial \Omega $. If $\Omega =M$ then $\partial
\Omega =\emptyset $ and (\ref{lu}) means that $\lim u\left( y_{k}\right) =0$
for any sequence $y_{k}\rightarrow \partial _{\infty }M$, that is, for any
sequence $\left\{ y_{k}\right\} $ that leaves any compact subset of $M$. In
particular, for $M=\mathbb{R}^{n}$ (\ref{lu}) is equivalent to $u\left(
y\right) \rightarrow 0$ as $\left\vert y\right\vert \rightarrow 0$.

We will use the notation 
\begin{equation*}
\chi _{u}\left( x\right) =\left\{ 
\begin{array}{ll}
1, & u\left( x\right) >0, \\ 
0, & u\left( x\right) \leq 0.%
\end{array}%
\right.
\end{equation*}

\begin{theorem}
\label{main-thm2}\label{T1}Let $M$ be an arbitrary weighted manifold, and
let $\Omega \subseteq M$ be a connected open subset of $M$ with a finite
Green function $G^{\Omega }$. Suppose $V,f\in C(\Omega )$ and assume $f\geq
0,$ $f\not\equiv 0$ in $\Omega $. Let $u\in C^{2}(\Omega )$ satisfy 
\begin{equation}
\text{in the case }q>0:\ \text{\ }-\mathcal{L}u+Vu^{q}\geq f\ \ \text{in}%
\,\,\Omega ,\ \ u\geq 0,  \label{semilinear2A}
\end{equation}%
or,%
\begin{equation}
\text{in the case }q<0:\ \left\{ 
\begin{array}{l}
-\mathcal{L}u+Vu^{q}\leq f\ \ \text{in}\,\,\Omega \text{,}\  \\ 
\lim_{y\rightarrow \partial _{\infty }\Omega }u\left( y\right) =0,%
\end{array}%
\right. \text{\ }u>0.  \label{semilinear2B}
\end{equation}%
Set $h=G^{\Omega }f$ and assume that $h<\infty $ in $\Omega $. Assume also
that $G^{\Omega }(h^{q}V)(x)$ (respectively $G^{\Omega }(\chi _{u}h^{q}V)(x)$
in the case $0<q<1$) is well-defined for all $x\in \Omega $. Then the
following statements hold for all $x\in \Omega $.

$\left( i\right) $ If $q=1$, then 
\begin{equation}
u(x)\geq h(x)e^{-\frac{1}{h(x)}G^{\Omega }(hV)(x)}.  \label{main-exp}
\end{equation}

$\left( ii\right) $ If $q>1$, then necessarily 
\begin{equation}
-(q-1)G^{\Omega }(h^{q}V)(x)<h(x),  \label{main-cond}
\end{equation}%
and the following estimate holds: 
\begin{equation}
u(x)\geq \frac{h(x)}{\left[ 1+(q-1)\frac{G^{\Omega }(h^{q}V)(x)}{h(x)}\right]
^{\frac{1}{q-1}}}.  \label{main1}
\end{equation}

$\left( iii\right) $ If $0<q<1$, then 
\begin{equation}
u(x)\geq h(x)\left[ 1-(1-q)\frac{G^{\Omega }(\chi _{u}h^{q}V)(x)}{h(x)}%
\right] _{+}^{\frac{1}{1-q}}.  \label{main}
\end{equation}

$\left( iv\right) $ If $q<0$ then necessarily \emph{(\ref{main-cond})}
holds, and 
\begin{equation}
u(x)\leq h(x)\left[ 1-(1-q)\frac{G^{\Omega }(h^{q}V)(x)}{h(x)}\right] ^{%
\frac{1}{1-q}}.  \label{main-negative}
\end{equation}
\end{theorem}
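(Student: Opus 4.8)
The proof proceeds via exhaustion: fix a relatively compact connected open set $\Omega'\Subset\Omega$ with smooth boundary, and reduce everything to $\Omega'$, then let $\Omega'\uparrow\Omega$. The heart of the argument is the substitution and the Doob transform sketched in the Introduction, which I would execute carefully in the weighted setting using the product rule \eqref{product} and chain rule \eqref{chain} for $\mathcal{L}$. Concretely, on $\Omega'$ introduce $h'=G^{\Omega'}f + (\text{harmonic extension of }u|_{\partial\Omega'})$, so that $-\mathcal{L}h'=f$ in $\Omega'$ and $h'=u$ on $\partial\Omega'$; then pass to $\widetilde u=u/h'$, which satisfies an inequality for the Doob-transformed operator $\mathcal L^{h'}=\frac1{h'}\circ\mathcal L\circ h'$ with $\widetilde u=1$ on $\partial\Omega'$. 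Next apply the change of variables $v=\phi^{-1}(\widetilde u)$ with $\phi$ solving $\phi'=\phi^q$, $\phi(0)=1$, so that $\phi^{-1}(1)=0$ and hence $v=0$ on $\partial\Omega'$. After the computation (this is essentially Lemma \ref{ident} referenced in the text, giving the extra term $\frac{\mathcal L h'}{h'}\bigl(\frac{\phi(v)-1}{\phi'(v)}-v\bigr)$), convexity/concavity of $\phi$ and the sign of $\mathcal L h'=-f\le0$ combine to yield $-\mathcal L^{h'} v + h'^{\,q-1}V \ge 0$ (for $q>0$) or $\le 0$ (for $q<0$) in $\Omega'$. The maximum principle for $\mathcal L^{h'}$ on $\Omega'$ then gives $v(x)\ge -\int_{\Omega'}G^{\Omega'}_{\mathcal L^{h'}}(x,y)\,h'^{\,q-1}(y)V(y)\,dm'(y)$ (or the reverse for $q<0$), and since Green's functions transform as $G^{\Omega'}_{\mathcal L^{h'}}(x,y)=G^{\Omega'}(x,y)\,h'(y)/h'(x)$, this is $v(x)\ge -\frac{1}{h'(x)}G^{\Omega'}(h'^{\,q}V)(x)$. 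Applying $\phi$ and recalling $u=h'\phi(v)$ recovers the four stated formulas with $h'$ in place of $h$ on $\Omega'$.

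\textbf{Passing to the limit.} Let $\Omega_k\uparrow\Omega$ be an exhaustion by relatively compact smooth domains. The function $h_k:=G^{\Omega_k}f + (\text{harmonic extension of }u|_{\partial\Omega_k})$ satisfies $h_k\ge G^{\Omega_k}f\to G^\Omega f=h$ as $k\to\infty$, by monotone convergence of the Dirichlet Green functions $G^{\Omega_k}\uparrow G^\Omega$. One also needs $h_k\to h$ (not just $\liminf h_k\ge h$): this should follow because $u$ itself is a supersolution (resp.\ subsolution) of $-\mathcal Lw=f$ up to the perturbation $Vu^q$, so $h_k\le u$ plus controllable corrections — or, more cleanly, one runs the whole argument with $h_k$ and passes to the limit in the final inequalities \eqref{main-exp}--\eqref{main-negative}, using $G^{\Omega_k}(h_k^{\,q}V)\to G^\Omega(h^qV)$, which in turn requires dominated/monotone convergence and the hypothesis that $G^\Omega(h^qV)$ (resp.\ $G^\Omega(\chi_u h^qV)$) is well-defined. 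For $q<0$ the boundary hypothesis $u(y)\to0$ as $y\to\partial_\infty\Omega$ is what forces $h_k\downarrow h$ and also what makes the necessity of \eqref{main-cond} come out: if the bracket in \eqref{main-negative} were nonpositive, $\phi(v)$ would blow up, contradicting $u<\infty$ (this is the analogue of the Brezis--Cabré computation alluded to in Section \ref{SecAux}).

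\textbf{The necessity of \eqref{main-cond} for $q>1$.} This is extracted from the same inequality: $v(x)\ge -\frac{1}{h'(x)}G^{\Omega'}(h'^{\,q}V)(x)$ combined with the fact that $\phi(s)=[(1-q)s+1]^{1/(1-q)}$ is only defined (as a finite positive number) for $(1-q)s+1>0$, i.e.\ $(q-1)s<1$; since $v(x)=\phi^{-1}(u(x)/h'(x))$ is finite, we get $(q-1)v(x)<1$, and plugging in the lower bound for $v$ gives $-(q-1)\frac{1}{h'(x)}G^{\Omega'}(h'^{\,q}V)(x)<1$, then pass to the limit. One must be careful that $G^{\Omega'}(h'^qV)$ may a priori be $-\infty$; but then the inequality $(q-1)v<1$ is still fine, and in the limit the well-definedness of $G^\Omega(h^qV)$ rules that out.

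\textbf{Main obstacle.} The delicate point is not the formal computation (which is the content of Lemma \ref{ident}) but the boundary behaviour of $h_k$ and the justification of the limit: $h$ may vanish at $\partial\Omega$, the Doob transform $\mathcal L^{h_k}$ degenerates there, and one must ensure the maximum principle and the Green's-function identity $G_{\mathcal L^{h_k}}^{\Omega_k}=h_k(y)G^{\Omega_k}(x,y)/h_k(x)$ are applied on a domain where $h_k$ stays bounded away from $0$ — which it does, since $h_k=u$ on $\partial\Omega_k$ and $u>0$ there in the relevant cases, but for $q>0$ with $u\ge0$ merely, one needs a further approximation replacing $u$ by $u+\varepsilon$ or working on $\{u>0\}$ (hence the appearance of $\Omega^+$ and $\chi_u$ in case $(iii)$). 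Controlling this, and checking that the well-definedness hypotheses on $G^\Omega(h^qV)$ are exactly what is needed to pass to the limit, is where the real work lies; I expect the paper handles it via the refinements promised in Sections \ref{SecProofB} and \ref{SecT1}.
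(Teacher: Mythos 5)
Your overall strategy (exhaustion, the substitution $v=\phi^{-1}(u/h)$ with $\phi'=\phi^{q}$, convexity of $\phi$ combined with $\mathcal{L}h\le 0$, the maximum principle, then a limit) is the one the paper uses, and your identification of the key computation with Lemma~\ref{ident} is correct. However, your limiting argument for $q>0$ has a genuine gap coming from your choice of $h'$. You insist on the equality $h'=u$ on $\partial\Omega_k$ and therefore set $h_k=G^{\Omega_k}f+P_k$, where $P_k$ is the harmonic extension of $u|_{\partial\Omega_k}$. For $q>0$ there is no boundary condition on $u$ at all, so $P_k$ need not tend to $0$: in general $h_k\to h+w$ for some nonnegative $\mathcal{L}$-harmonic $w\not\equiv 0$, and your proposed justification (that $u$ is a supersolution ``plus controllable corrections'') does not repair this. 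Nor can you simply accept the limit with $h+w$ in place of $h$: the right-hand side $H\,\phi\bigl(-G^{\Omega}(H^{q}V)/H\bigr)$ is not monotone in $H$ when $V$ is signed, so a lower bound in terms of $h+w\ge h$ does not imply the stated bound in terms of $h$. The fix is to observe that the compact-domain result (Theorem~\ref{T2}) only requires the one-sided boundary inequality $u\ge h$ on $\partial\Omega_k$, so one should take $h_k=G^{\Omega_k}f$ with zero boundary values (then $u\ge 0=h_k$ on $\partial\Omega_k$ trivially), and $h_k\uparrow h$ by monotone convergence of the Green functions. For $q<0$ your construction is salvageable, since $u\to 0$ at $\partial_\infty\Omega$ forces $P_k\to 0$; the paper instead works directly in $\Omega$ with $h_\varepsilon=\varepsilon+G^{\Omega}f$ and the global maximum principle of Lemma~\ref{Lemmax}.

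Two further points. First, the paper deliberately avoids the transformed Green kernel $G^{\Omega_k}_{\mathcal{L}^{h}}(x,y)=G^{\Omega_k}(x,y)h(y)/h(x)$, which degenerates where $h\to 0$ on the boundary; instead it rewrites the transformed inequality as $-\mathcal{L}(hv)+h^{q}V\ge 0$ for the original operator and applies the ordinary maximum principle to $hv$. Your route is morally equivalent but would require justifying the kernel identity and the maximum principle for $\mathcal{L}^{h}$ up to a boundary where $h$ vanishes. Second, you omit the preliminary reduction to locally H\"{o}lder continuous $f$ (needed so that $G^{\Omega_k}f\in C^{2}$ and the pointwise chain-rule computation is legitimate), and, for $q\ge 1$, the separate argument establishing $u>0$ in $\Omega$ before one can pass to the limit $(u/(u+\varepsilon))^{q}\uparrow 1$ by monotone convergence. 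These are exactly the refinements carried out in Sections~\ref{SecProofB} and~\ref{SecT1}.
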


Note that the condition $f\not\equiv 0$ implies $h>0$ in $\Omega $. Note
also that without loss of generality the open set $\Omega $ in Theorem \ref%
{T1} can be taken to be $M$. However, we have preferred the present
formulation for the sake of convenience in applications.

\begin{remark}
\RM
In the case $q\geq 1,$ it follows from (\ref{main-exp}) and (\ref{main1})
that the condition%
\begin{equation*}
G^{\Omega }\left( h^{q}V\right) \left( x\right) <+\infty
\end{equation*}%
implies $u\left( x\right) >0$. Moreover, if for some $C>0$ and all $x\in
\Omega $%
\begin{equation*}
G^{\Omega }\left( h^{q}V\right) \left( x\right) \le Ch\left( x\right) ,
\end{equation*}%
then $u\geq ch$ in $\Omega $ with some constant $c=c\left( C\right) >0$.

In the case $0<q<1$ the function $u$ can vanish in $\Omega $, but the
estimate of $u$ cannot depend on the values of $V$ on the set $\left\{
u=0\right\} $. This explains the appearance of the factor $\chi _{u}$ and
the subscript $+$ on the right-hand side of (\ref{main}).

In the case $q<0$, the boundary condition $\lim_{y\rightarrow \partial
_{\infty }\Omega }u\left( y\right) =0$ is needed as without this condition,
for positive $V$, the function $u+C$ would also be a solution to (\ref%
{semilinear2B}) for any $C>0$, so that $u$ could not admit any upper bound.
\end{remark}

\begin{remark}
\RM The lower estimates of Theorem~\ref{T1} $\left( i\right) ,\left(
ii\right) ,\left( iii\right) $ remain valid even if the expression $%
G^{\Omega }(h^{q}V)$ is not well-defined in the above sense, provided it is
understood as follows%
\begin{equation}
G^{\Omega }(h^{q}V)(x):=\liminf_{n\rightarrow \infty }\int_{\Omega
_{n}}G^{\Omega _{n}}(x,y)h^{q}(y)V(y)dy,  \label{improper}
\end{equation}%
where $\left\{ \Omega _{n}\right\} $ is any exhaustion of $\Omega $ by
relatively compact subsets with smooth boundaries. The same is true for the
upper estimate of $\left( iv\right) $ where one can use $\limsup $ in place
of $\liminf $.

In the case $q=1$ and $h=G^{\Omega }f$, this means 
\begin{equation}
G^{\Omega }(hV)(x)=G_{2}^{\Omega }f(x)=\liminf_{n\rightarrow \infty
}\int_{\Omega _{n}}G_{2}^{\Omega _{n}}(x,y)f(y)dy,\quad x\in \Omega ,
\label{improper2}
\end{equation}%
where $G_{2}^{\Omega }$ stands for the second iteration of the Green kernel
with respect to $V(y)dy$: 
\begin{equation}
G_{2}^{\Omega }(x,y)=\int_{\Omega }G^{\Omega }(x,z)G^{\Omega
}(z,y)V(z)dz,\quad x,y\in \Omega .  \label{K2}
\end{equation}%
In some cases $G_{2}^{\Omega }(x,y)$ in (\ref{K2}) can be understood as an
improper integral. (See Example 1 in Section \ref{examples} below.)
\end{remark}

\begin{remark}
\RM Suppose $q>1$ in Theorem~\ref{T1}. The necessary condition (\ref%
{main-cond}) for the existence of a positive solution of (\ref{semilinear2A}%
) in the case $V\leq 0$ was proved in \cite{KV}, without the sharp constant $%
\frac{1}{q-1}$, but for general quasi-metric kernels, including a wide
variety of differential and integral operators. It was also shown in \cite%
{KV} that the stronger condition 
\begin{equation}
-G^{\Omega }(h^{q}V)(x)\leq \left( 1-\frac{1}{q}\right) ^{q}\frac{1}{q-1}%
\,h(x),\quad x\in \Omega ,  \label{sufficient}
\end{equation}%
is sufficient for the existence of a solution $u$ such that 
\begin{equation*}
h\leq u\leq C(q)\,h
\end{equation*}%
Brezis and Cabr\'{e} \cite{BC} subsequently proved the necessity of (\ref%
{main-cond}) with the sharp constant $\frac{1}{q-1}$ in the case of $%
\mathcal{L}=\Delta $ in bounded domains of $\mathbb{R}^{n}$ (see also
Theorem \ref{T4} below).
\end{remark}

In the proof of Theorem \ref{T1}, we use Theorem \ref{T2} below that deals
with relatively compact sets $\Omega \subset M$. Fix a function $h\in
C^{2}\left( \Omega \right) \cap C\left( \overline{\Omega }\right) $ such
that 
\begin{equation}
h>0\text{ \ in\ \ }\Omega \ \ \text{and }-\mathcal{L}h\geq 0\ \ \text{in\ \ }%
\Omega .  \label{h}
\end{equation}%
Consider in $\Omega $ the following boundary value inequalities:%
\begin{equation}
\begin{cases}
-\mathcal{L}u+Vu^{q}\geq -\mathcal{L}h\ \ \ \text{in }\Omega \\ 
u\geq h\quad \text{on }\partial \Omega \\ 
u\geq 0\ \ \ \ \text{in }\Omega%
\end{cases}%
\ \ \ \ \text{in the case }q>0,  \label{semilinearA}
\end{equation}%
and%
\begin{equation}
\begin{cases}
-\mathcal{L}u+Vu^{q}\leq -\mathcal{L}h\ \ \ \text{in }\Omega \quad \\ 
u\leq h\ \ \ \ \text{on }\partial \Omega \\ 
u>0\ \ \ \ \ \text{in }\Omega%
\end{cases}%
\ \ \ \text{in the case }q<0,  \label{semilinearB}
\end{equation}%
where $V\in C(\Omega )$ and $u\in C^{2}(\Omega )\cap C\left( \overline{%
\Omega }\right) $. In the next theorem we compare $u$ and $h$ as follows.

\begin{theorem}
\label{main-thm}\label{T2} Let $\left( M,m\right) $ be an arbitrary weighted
manifold, and let $\Omega \subset M$ be a relatively compact connected open
subset of $M$. Let a function $h\in C^{2}\left( \Omega \right) \cap C\left( 
\overline{\Omega }\right) $ satisfy \emph{(\ref{h})}.

Let $V\in C(\Omega )$ and suppose that $u\in C^{2}(\Omega )\cap C\left( 
\overline{\Omega }\right) $ is a solution to either \emph{(\ref{semilinearA})%
} or \emph{(\ref{semilinearB})}. Assume also that $G^{\Omega }(h^{q}V)(x)$
(respectively $G^{\Omega }(\chi _{u}h^{q}V)(x)$ in the case $0<q<1$) is
well-defined for all $x\in \Omega $. Then statements $\left( i\right) $-$%
\left( iv\right) $ of Theorem \emph{\ref{T1}} hold.
\end{theorem}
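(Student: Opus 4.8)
The plan is to reduce, via Doob's $h$-transform, to the model situation $h\equiv 1$ sketched in the Introduction, and then to run the substitution $v=\phi^{-1}(u/h)$ with $\phi$ solving $\phi'=\phi^{q}$, $\phi(0)=1$. First I would introduce the transformed operator $\mathcal{L}^{h}w=\frac1h\mathcal{L}(hw)$ and record, using the product rule (\ref{product}), the splitting
\[
\mathcal{L}^{h}=\mathcal{L}_{h}+\frac{\mathcal{L}h}{h},
\]
where $\mathcal{L}_{h}$ is the weighted Laplace operator associated with the measure $h^{2}\,dm$; in particular $\mathcal{L}_{h}$ still obeys the chain rule (\ref{chain}). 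Putting $\widetilde u=u/h$, dividing (\ref{semilinearA}) (for $q>0$), resp.\ (\ref{semilinearB}) (for $q<0$), by $h>0$ and using $\mathcal{L}u=h\,\mathcal{L}^{h}\widetilde u$ yields the transformed inequality
\[
-\mathcal{L}_{h}\widetilde u+h^{q-1}V\widetilde u^{\,q}\ \ge\ \frac{\mathcal{L}h}{h}\,(\widetilde u-1)\qquad (q>0),
\]
with the reversed inequality when $q<0$, together with the boundary values $\widetilde u\ge 1$ (resp.\ $\widetilde u\le 1$) on $\partial\Omega$.

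Next I would carry out the substitution. With $\phi(s)=e^{s}$ for $q=1$ and $\phi(s)=[(1-q)s+1]^{1/(1-q)}$ otherwise, on the maximal interval where $\phi>0$, the function $\phi$ is increasing, convex for $q>0$ and concave for $q<0$, with $\phi'=\phi^{q}>0$ and $\phi'(0)=1$. On the open set $\Omega^{+}=\{u>0\}$ (which equals $\Omega$ when $q<0$ and, as the argument itself will show, also when $q\ge 1$ provided the relevant integral is finite) set $v=\phi^{-1}(\widetilde u)$. Applying the chain rule for $\mathcal{L}_{h}$, dividing by $\phi'(v)>0$ and using $\phi(v)^{q}/\phi'(v)=1$ turns the nonlinear term into the linear potential term $h^{q-1}V$ and gives the $|\nabla v|^{2}$ term the favourable sign; writing $\frac{\phi(v)-1}{\phi'(v)}=\bigl(\frac{\phi(v)-1}{\phi'(v)}-v\bigr)+v$ and absorbing the remaining $v$ into the zeroth‑order part of $\mathcal{L}^{h}$ produces
\[
-\mathcal{L}^{h}v+h^{q-1}V\ \ge\ \frac{\mathcal{L}h}{h}\Bigl(\frac{\phi(v)-1}{\phi'(v)}-v\Bigr)\qquad (q>0),
\]
and the reverse for $q<0$. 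The tangent‑line inequality for convex, resp.\ concave, $\phi$, namely $\phi(v)-1\le v\phi'(v)$, resp.\ $\phi(v)-1\ge v\phi'(v)$, together with $\mathcal{L}h\le 0$, makes the right–hand side $\ge 0$, resp.\ $\le 0$, so that
\[
-\mathcal{L}^{h}v+h^{q-1}V\ \ge\ 0\qquad(\text{resp. }\le 0).
\]

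I would then undo the transform: multiplying by $h$ and using $h\,\mathcal{L}^{h}v=\mathcal{L}(hv)$ gives $-\mathcal{L}(hv)\ge -h^{q}V$, resp.\ $\le$. Consequently $hv+G^{\Omega}(h^{q}V)$ — with $h^{q}V$ replaced by $\chi_{u}h^{q}V$, and $\Omega$ by $\Omega^{+}$, when $0<q<1$ — is $\mathcal{L}$‑superharmonic if $q>0$ and $\mathcal{L}$‑subharmonic if $q<0$, since $-\mathcal{L}G^{\Omega}g=g$. Verifying that this function has boundary limit $0$ and invoking the minimum, resp.\ maximum, principle yields $v\ge -G^{\Omega}(h^{q}V)/h$, resp.\ $v\le -G^{\Omega}(h^{q}V)/h$; because $v$ takes values in the domain of $\phi$, this also forces (\ref{main-cond}) in the cases $q>1$ and $q<0$. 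Applying the increasing function $\phi$ to both sides then gives exactly (\ref{main-exp}), (\ref{main1}), (\ref{main}) and (\ref{main-negative}).

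The real content lies in the boundary and degeneracy issues compressed into the previous paragraph. Since $h$ may vanish on $\partial\Omega$, the ratio $\widetilde u$ and hence $v$ may blow up there; I would handle this by observing that $hv$ depends on $u$ only through $h^{q}u^{1-q}-h$ up to a constant (and through $h\ln(u/h)$ when $q=1$), which tends to $0$ on $\partial\Omega$ because $u\in C(\overline\Omega)$ together with $u\ge h$ (for $q>0$), resp.\ $u\le h$ (for $q<0$), on $\partial\Omega$. When $0<q<1$ the solution $u$ may vanish inside $\Omega$, so $v$ is only defined on $\Omega^{+}$; there one proves the estimate on $\Omega^{+}$ and then checks that the comparison function extends to an $\mathcal{L}$‑superharmonic function on all of $\Omega$ — using that $\mathcal{L}h=0$ on $\{u=0\}$, which follows from the differential inequality evaluated at an interior minimum of $u$ — so that the minimum principle applies on $\Omega$ and the estimate, read with the cutoff $\chi_{u}$ and the truncation $[\,\cdot\,]_{+}$, holds on $\{u=0\}$ as the trivial identity $0=0$. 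Finally, the minimum principle is used in its quasi‑everywhere form to absorb the possibly irregular part of $\partial\Omega$. I expect the superharmonicity of the comparison function across $\{u=0\}$ in the range $0<q<1$, together with the boundary analysis when $h$ degenerates on $\partial\Omega$, to be the main obstacle; the rest is the algebra of the chain rule.
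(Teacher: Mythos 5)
Your algebraic core is exactly the paper's: the $h$-transform splitting $\mathcal{L}(hw)=h\widetilde{\mathcal{L}}w+w\mathcal{L}h$ with $\widetilde{\mathcal{L}}$ the weighted Laplacian for $h^{2}dm$, the chain rule, the choice $\phi'=\phi^{q}$, $\phi(0)=1$, the tangent-line inequality $\frac{\phi(v)-1}{\phi'(v)}-v\le 0$ for convex $\phi$, and the maximum principle (this is Lemmas \ref{L1}--\ref{Cor} and the ``special case'' proof in Section \ref{SecProofB}). The gap is precisely in the part you compress into the last paragraph, which the paper itself flags as the actual difficulty. Your claim that $hv=\frac{1}{1-q}\bigl(h^{q}u^{1-q}-h\bigr)$ (resp.\ $h\ln(u/h)$) tends to $0$ on $\partial\Omega$ ``because $u\in C(\overline\Omega)$ and $u\ge h$ on $\partial\Omega$'' is false at boundary points where $h$ and $u$ both vanish. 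The hypothesis $u\ge h$ holds only \emph{on} $\partial\Omega$, not near it, so the ratio $u/h$ is uncontrolled as $x\to y$ with $h(y)=u(y)=0$: for $q=1$ the term $h\ln u$ need not vanish (e.g.\ $u\sim e^{-1/h}$ gives limit $-1$); for $q>1$ the term $h^{q}/u^{q-1}$ can blow up (e.g.\ $h\simeq d$, $u\simeq d^{3}$, $q=2$); for $q<0$ the term $(u/h)^{-q}u$ can blow up (e.g.\ $h\simeq d$, $u\simeq\sqrt{d}$, $q<-1$). In each of these cases the one-sided boundary condition needed for Lemma \ref{Lemmax} ($\liminf hv\ge0$, resp.\ $\limsup hv\le0$) is not available, and the maximum principle cannot be invoked. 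Likewise, for $0<q<1$ your proposed extension of the comparison function across the interior set $\{u=0\}$ is not substantiated: $h^{q}u^{1-q}$ is merely continuous, not $C^{2}$, across $\{u=0\}$, and superharmonicity there does not follow from the observation that $\mathcal{L}h=0$ at interior minima of $u$.

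The paper's resolution, which is the one missing ingredient in your write-up, is an $\varepsilon$-regularization: for $q>0$ set $u_{\varepsilon}=u+\varepsilon$ and $v_{\varepsilon}=\phi^{-1}(u_{\varepsilon}/h)$, so that $v_{\varepsilon}\in C^{2}(\Omega)$ everywhere (no interior zeros), the differential inequality becomes $-\mathcal{L}(hv_{\varepsilon})\ge -h^{q}(u/u_{\varepsilon})^{q}V$, and the lower bound $u_{\varepsilon}\ge\varepsilon$ forces $hv_{\varepsilon}\to 0$ at boundary points where $h$ vanishes (e.g.\ $h^{q}u_{\varepsilon}^{1-q}\le h^{q}\varepsilon^{1-q}$ for $q>1$, and $h\ln u_{\varepsilon}\ge h\ln\varepsilon$ for $q=1$); for $q<0$ one instead sets $h_{\varepsilon}=h+\varepsilon$ and $v_{\varepsilon}=\phi^{-1}(u/h_{\varepsilon})$, which is continuous up to $\overline\Omega$ with $v_{\varepsilon}\le0$ on $\partial\Omega$. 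One then lets $\varepsilon\to0$ using monotone convergence; this limit is also what produces the factor $\chi_{u}$ in case $(iii)$, since $u/u_{\varepsilon}\uparrow\chi_{u}$, and it is where the positivity of $u$ for $q\ge1$ is actually proved (via the bound with $V_{+}$ and the local finiteness of $G^{\Omega}(h^{q}V_{+})$), rather than merely asserted. Without some such regularization your boundary and interior-degeneracy steps do not go through.
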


\begin{remark}
\RM In the linear case $q=1$, we obtain a simple proof of the well-known
lower estimate of solutions to the Schr\"{o}dinger equation: 
\begin{equation}
u(x)\geq h(x)e^{-\frac{1}{h(x)}G^{\Omega }(hV)(x)},\quad \text{for all }%
\,\,x\in \Omega .  \label{main-exp2}
\end{equation}%
This estimate in the special case $h=1$ is usually deduced via the
Feynman-Kac formalism (see \cite{AS}, \cite{CZ}) using Jensen's inequality.
In the case $V\geq 0$, alternative proofs based on potential theory methods
in a very general setting are given in \cite{G}, \cite{GH}. In the case $%
V\leq 0$, a similar lower estimate and a matching upper estimate (but
without sharp constants) are obtained in \cite{FV}, \cite{FNV} for general
quasi-metric kernels.

An interesting special case is when $h$ is the solution of the Dirichlet
problem: 
\begin{equation}
\begin{cases}
-\mathcal{L}h=1\quad \text{in}\,\,\Omega , \\ 
h=0\quad \text{on}\,\,\partial \Omega .%
\end{cases}
\label{exit-time}
\end{equation}%
In other words, $h(x)=\mathbb{E}_{x}[\tau _{\Omega }]$, where $\tau _{\Omega
}=\inf \{t\!:\,X_{t}\not\in \Omega \}$ is the first exit time from $\Omega $
of the (rescaled) Brownian motion $X_{t}$, and $x\in \Omega $ is a starting
point. For bounded $C^{1,1}$ domains, $h(x)\simeq d_{\Omega }(x)$, where 
\begin{equation}
d_{\Omega }(x)=\limfunc{dist}(x,\partial \Omega ).  \label{dOm}
\end{equation}%
This gives sharp estimates: 
\begin{equation}
u(x)\geq c\,d_{\Omega }(x)\,e^{-\frac{c}{d_{\Omega }(x)}G^{\Omega
}(d_{\Omega }V)(x)},\quad \text{for all }\,\,x\in \Omega .  \label{main-exp1}
\end{equation}%
if $q=1$, as well as the corresponding estimates for other values of $q$.

For bounded Lipschitz domains with sufficiently small Lipschitz constant
(less than $(n-1)^{1/2}$, which is sharp), it is known that (see \cite{AAC}) 
\begin{equation*}
h(x)\simeq \rho (x)=\min (1,G^{\Omega }(x,x_{0})),
\end{equation*}%
where $x_{0}$ is a fixed pole in $\Omega $, and so (\ref{main-exp1}) holds
with $\rho $ in place of $d_{\Omega }$. The corresponding estimates hold for
other values of $q\in \mathbb{R}$ as well.
\end{remark}

Returning again to the case of an arbitrary (not necessarily relatively
compact) $\Omega $, in the next theorem we give estimates of solutions $u$
of (\ref{semilinear2A})-(\ref{semilinear2B}) with $f=0.$ They are applicable
to the so-called gauge ($q=1$), \textquotedblleft large\textquotedblright\
solutions ($q>1$), or \textquotedblleft ground state\textquotedblright\
solutions ($-\infty <q<1$) to the corresponding equations in unbounded
domains in $\mathbb{R}^{n}$ or non-compact manifolds.

\begin{theorem}
\label{main-thm3}\label{T3} Let $M$ be an arbitrary weighted manifold, and
let $\Omega \subseteq M$ be an open connected set with a finite Green
function $G^{\Omega }$. Suppose $V\in C(\Omega )$. Let $u\in C^{2}(\Omega )$
satisfy either the inequality 
\begin{equation}
-\mathcal{L}u+V\,u^{q}\geq 0,\quad u\geq 0\,\,\text{in}\,\,\Omega ,\ \ \text{%
if }q>0,  \label{semilinearC}
\end{equation}%
or 
\begin{equation}
-\mathcal{L}u+V\,u^{q}\leq 0,\quad u>0\,\,\text{in}\,\,\Omega ,\ \text{if }%
q<0.  \label{semilinearD}
\end{equation}%
Assume also that $G^{\Omega } V (x)$ (respectively $G^{\Omega }(\chi _{u}
V)(x)$ in the case $0<q<1$) is well-defined for all $x\in \Omega $. Then the
following statements hold for all $x \in \Omega$.

$(i)$ If $q=1$ and 
\begin{equation}
\liminf_{y\rightarrow \partial _{\infty }\Omega }\,u\left( y\right) \geq 1
\label{u=1-thm3}
\end{equation}%
then 
\begin{equation}
u(x)\geq e^{-\ G^{\Omega }V(x)}.  \label{est1-thm3}
\end{equation}

$(ii)$ If $q>1$ and 
\begin{equation}
\lim_{y\rightarrow \partial _{\infty }\Omega }\,u\left( y\right) =+\infty \,,
\label{u=infty-thm3}
\end{equation}%
then necessarily $G^{\Omega }V(x)>0$, and 
\begin{equation}
u(x)\geq \left[ (q-1)\,G^{\Omega }V(x)\right] ^{-\frac{1}{q-1}}.
\label{est2-thm3}
\end{equation}

$(iii)$ If $0<q<1$, then 
\begin{equation}
u(x)\geq \left[ -(1-q)\,G^{\Omega }(\chi _{u}V)(x)\right] _{+}^{\frac{1}{1-q}%
}.  \label{est3-thm3}
\end{equation}

$(iv)$ If $q<0$, and 
\begin{equation}
\lim_{y\rightarrow \partial _{\infty }\Omega }\,u\left( y\right) =0,
\label{u=0-thm3}
\end{equation}%
then necessarily $G^{\Omega }V(x) < 0$, and 
\begin{equation}
u(x)\leq \left[ -(1-q)\,G^{\Omega }V(x)\right] ^{\frac{1}{1-q}}.
\label{est4-thm3}
\end{equation}
\end{theorem}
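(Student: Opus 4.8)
The plan is to derive Theorem~\ref{T3} from Theorem~\ref{T2} by an exhaustion argument, using that the Green functions of a growing family of relatively compact subdomains increase to $G^\Omega$. Fix $x\in\Omega$ and a connected exhaustion $\Omega_1\Subset\Omega_2\Subset\cdots$, with $\bigcup_k\Omega_k=\Omega$, by relatively compact open sets with smooth boundaries and $x\in\Omega_1$. On each $\Omega_k$ I will apply Theorem~\ref{T2} taking the auxiliary function $h$ to be a suitable positive \emph{constant}; any positive constant satisfies \eqref{h}, since $-\mathcal{L}h=0\ge0$, and all the well-definedness hypotheses of Theorem~\ref{T2} are automatic on the relatively compact $\Omega_k$. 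The remaining work is the choice of the constant and the passage to the limit, which is carried out by splitting $V=V_+-V_-$ and using monotone convergence, so that $G^{\Omega_k}V_\pm(x)\uparrow G^\Omega V_\pm(x)$ and hence $G^{\Omega_k}V(x)\to G^\Omega V(x)$ (well defined by hypothesis; cf. the Remark following Theorem~\ref{T1} on the $\liminf$ interpretation).

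For $(i)$ and $(ii)$, put $c_k:=\inf_{\partial\Omega_k}u$. Any sequence $y_k\in\partial\Omega_k$ leaves every compact subset of $\Omega$, so $y_k\to\partial_\infty\Omega$; thus \eqref{u=1-thm3} gives $\liminf_k c_k\ge1$ and \eqref{u=infty-thm3} gives $c_k\to+\infty$, so $c_k>0$ for large $k$. For such $k$, $h\equiv c_k$ and $u$ satisfy \eqref{semilinearA} on $\Omega_k$, because $-\mathcal{L}u+Vu^q\ge0=-\mathcal{L}h$, $u\ge0$, and $u\ge c_k=h$ on $\partial\Omega_k$. Applying Theorem~\ref{T2}$(i)$--$(ii)$ and simplifying via $G^{\Omega_k}(c_k^qV)=c_k^q\,G^{\Omega_k}V$ gives
\[u(x)\ge c_k\,e^{-G^{\Omega_k}V(x)}\qquad(q=1),\]
\[u(x)\ge\bigl(c_k^{-(q-1)}+(q-1)\,G^{\Omega_k}V(x)\bigr)^{-\frac1{q-1}}\qquad(q>1),\]
the latter with $c_k^{-(q-1)}+(q-1)G^{\Omega_k}V(x)>0$ (this is \eqref{main-cond} on $\Omega_k$, after dividing by $c_k^{q-1}$). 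Since $q-1>0$ we have $c_k^{-(q-1)}\to0$ in case $(ii)$; letting $k\to\infty$ yields \eqref{est1-thm3} and \eqref{est2-thm3}. The strict sign $G^\Omega V(x)>0$ in $(ii)$ follows because $0<u(x)<\infty$ forces, in the limit, $(q-1)G^\Omega V(x)\ge u(x)^{-(q-1)}>0$.

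Case $(iv)$ is the mirror image: take $c_k:=\sup_{\partial\Omega_k}u>0$, which tends to $0$ by \eqref{u=0-thm3}; then $h\equiv c_k$ and $u$ satisfy \eqref{semilinearB} on $\Omega_k$, and Theorem~\ref{T2}$(iv)$ gives $u(x)\le\bigl(c_k^{1-q}-(1-q)G^{\Omega_k}V(x)\bigr)^{\frac1{1-q}}$ with positive bracket. Now $1-q>1$, so $c_k^{1-q}\to0$, and the limit is \eqref{est4-thm3}; moreover $u(x)>0$ forces the limiting bracket to be $\ge u(x)^{1-q}>0$, i.e. $G^\Omega V(x)<0$.

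Case $(iii)$ is where I expect the real obstacle. Here $u$ carries no boundary hypothesis, so $\inf_{\partial\Omega_k}u$ may be $0$ and no positive constant need lie below $u$ on $\partial\Omega_k$. If $c_k:=\inf_{\partial\Omega_k}u>0$ for infinitely many $k$, then Theorem~\ref{T2}$(iii)$ with $h\equiv c_k$ gives $u(x)\ge\bigl(c_k^{1-q}-(1-q)G^{\Omega_k}(\chi_uV)(x)\bigr)_+^{\frac1{1-q}}$, and since $c_k^{1-q}\ge0$ only enlarges the bracket, dropping it and letting $k\to\infty$ yields \eqref{est3-thm3}. In general one instead runs the exhaustion through the relatively compact components of the superlevel sets $\{u>\varepsilon\}$ for regular values $\varepsilon\downarrow0$ (smooth boundary by Sard's theorem, with $u=\varepsilon$ there), applies Theorem~\ref{T2}$(iii)$ with $h\equiv\varepsilon$, and sends $\varepsilon\to0$. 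The delicate point — the main obstacle — is to verify that this limit produces the estimate with the full kernel $G^\Omega$ rather than with the Green function of $\{u>0\}$, and that \eqref{est3-thm3} survives at points where $u$ vanishes (there it asserts $G^\Omega(\chi_uV)(x)\ge0$). I would handle this via the substitution $v=\phi^{-1}(u)$ with $\phi(s)=[(1-q)s+1]^{1/(1-q)}$ on $\{u>0\}$: the chain rule \eqref{chain} together with $\phi'=\phi^q$ gives $-\mathcal{L}v+V\ge q\,u^{q-1}|\nabla v|^2\ge0$ there, while $v\to-\tfrac1{1-q}$ along $\partial\{u>0\}\cap\Omega$, so that $w:=v+\tfrac1{1-q}=\tfrac1{1-q}\,u^{1-q}$ is a nonnegative supersolution of $-\mathcal{L}w\ge-V$ vanishing on the inner boundary; comparing $w$ with Green potentials, and using that wherever $V\le0$ the equation forces $u$ to be superharmonic (hence $u>0$ on $\Omega$ or $u\equiv0$), is what reconciles the two Green functions and controls the zero set of $u$. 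Getting this bookkeeping right is, in my view, the only hard part.
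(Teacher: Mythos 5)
Your treatment of cases $(i)$, $(ii)$ and $(iv)$ is correct and essentially identical to the paper's: the paper also applies Theorem \ref{T2} on an exhaustion with $h\equiv\nu_n=\inf_{\partial\Omega_n}u$ (resp.\ $\sup$ for $q<0$), and extracts the strict signs of $G^\Omega V(x)$ from \eqref{main-cond} plus $0<u(x)<\infty$ exactly as you do. (For $q=1$ the paper instead first shows $u>0$ and then runs the maximum principle of Lemma \ref{Lemmax} directly on $\Omega$ with $v=\ln u$, but your variant via the constants $c_k$ is equally valid.)

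Case $(iii)$ is where your argument has a genuine gap, and you have correctly located it but not closed it. Your fallback of exhausting by components of the superlevel sets $\{u>\varepsilon\}$ produces, in the limit, a potential taken with respect to the Green function of $\{u>0\}$ (or of those components), not $G^{\Omega}$; since $G^{\{u>0\}}\le G^{\Omega}$ and $V$ is signed, there is no one-sided comparison that converts one estimate into the other. The subsequent sketch does not repair this: the dichotomy ``$u>0$ on $\Omega$ or $u\equiv0$'' does not follow from superharmonicity of $u$ on the subset $\{V\le 0\}$ only, and the comparison of $w=\tfrac{1}{1-q}u^{1-q}$ with Green potentials is again a comparison on $\{u>0\}$, not on $\Omega$. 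The device you are missing is in the Remark following Lemma \ref{Cor}: since the comparison function is $h\equiv1$ with $\mathcal{L}h=0$, the term $\left(\frac{\phi(v)-1}{\phi'(v)}-v\right)\mathcal{L}h$ vanishes identically and the normalization $\phi(0)=1$ is not needed. One may therefore take the \emph{shift-free} solution of $\phi'=\phi^{q}$, namely $\phi(v)=[(1-q)v]^{\frac{1}{1-q}}$ on $(0,\infty)$, and set $v_n=\phi^{-1}(u+\varepsilon_n)=\frac{(u+\varepsilon_n)^{1-q}}{1-q}$ for $\varepsilon_n\downarrow0$. Then $v_n>0$ everywhere, in particular on $\partial\Omega_n$, with no boundary hypothesis on $u$; Lemma \ref{Cor} gives $-\mathcal{L}v_n+\left(\frac{u}{u+\varepsilon_n}\right)^{q}V\ge0$, the maximum principle on $\Omega_n$ applies with the full kernel $G^{\Omega_n}$, and letting $n\to\infty$ (so that $\left(\frac{u}{u+\varepsilon_n}\right)^{q}\uparrow\chi_u$ and $G^{\Omega_n}\uparrow G^{\Omega}$) yields $\frac{u^{1-q}}{1-q}\ge -G^{\Omega}(\chi_uV)$, which is \eqref{est3-thm3}. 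No superlevel sets, Sard's theorem, or reconciliation of Green functions is required.
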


In the next theorem we provide criteria for the existence of positive
solutions for the equation 
\begin{equation}
-\mathcal{L}u+u^{q}V=f\quad \mathrm{in}\,\,\Omega  \label{diff}
\end{equation}%
under some additional assumptions and give two-sided pointwise estimates for
these solutions.

\begin{theorem}
\label{C4}Let $M$ be a weighted manifold and $\Omega \subset M$ be a
connected relatively compact open set with smooth boundary. Let $f\geq 0$
and $V$ be locally H\"{o}lder continuous functions in $\Omega $ and in
addition $f\in C\left( \overline{\Omega }\right) $. Set $h=G^{\Omega }f$.
Then the following statements hold.

$\left( i\right) $ For $q>1$ and $V\leq 0$, suppose that for all $x\in
\Omega $%
\begin{equation}
-G^{\Omega }\left( h^{q}V\right) \left( x\right) \leq \left( 1-\frac{1}{q}%
\right) ^{q}\frac{1}{q-1}\,h(x).  \label{cond1-corr}
\end{equation}%
Then \emph{(\ref{diff})} has a nonnegative solution $u\in C^{2}\left( \Omega
\right) \cap C\left( \overline{\Omega }\right) $, and it satisfies for all $%
x\in \Omega $%
\begin{equation}
\frac{h(x)}{\left[ 1+(q-1)\frac{G^{\Omega }(h^{q}V)(x)}{h(x)}\right] ^{\frac{%
1}{q-1}}}\leq u\left( x\right) \leq \frac{q}{q-1}\,h(x).  \label{uh+}
\end{equation}

$\left( ii\right) $ For $q<0$ and $V\geq 0$, suppose that for all $x\in
\Omega $%
\begin{equation}
G^{\Omega }(h^{q}V)(x)\leq \left( 1-\frac{1}{q}\right) ^{q}\frac{1}{1-q}%
\,h(x).  \label{cond2-corr}
\end{equation}%
Then \emph{(\ref{diff})} has a nonnegative solution $u\in C^{2}\left( \Omega
\right) \cap C\left( \overline{\Omega }\right) $, and it satisfies for all $%
x\in \Omega $%
\begin{equation}
\frac{1}{1-\frac{1}{q}}\,h(x)\leq u(x)\leq \left[ 1-(1-q)\frac{G^{\Omega
}(h^{q}V)(x)}{h(x)}\right] ^{\frac{1}{1-q}}h(x).  \label{uh-}
\end{equation}
\end{theorem}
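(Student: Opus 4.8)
The plan is to construct the solution $u$ directly, by solving the integral equation $u=h-G^{\Omega }(Vu^{q})$ — which for the smooth relatively compact $\Omega $ is equivalent to (\ref{diff}) with zero Dirichlet data — via monotone iteration between explicit ordered sub- and supersolutions that are constant multiples of $h$, and then to read off the one missing one-sided pointwise bound from Theorem \ref{T1}. We may assume $f\not\equiv 0$, since otherwise $h\equiv 0$ and all assertions are trivial.

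For part $(i)$, where $q>1$ and $V\leq 0$, consider $T(w)=h+G^{\Omega }(|V|w^{q})$; since $t\mapsto t^{q}$ is increasing and $|V|\geq 0$, $T$ is monotone, and a nonnegative fixed point of $T$ in $C^{2}(\Omega )\cap C(\overline{\Omega })$ solves (\ref{diff}). I would check that $h$ is a subsolution, $T(h)\geq h$, and that $\tfrac{q}{q-1}h$ is a supersolution: using the algebraic identity $\bigl(\tfrac{q}{q-1}\bigr)^{q}\bigl(1-\tfrac{1}{q}\bigr)^{q}=1$ together with (\ref{cond1-corr}),
\begin{equation*}
T\Bigl(\tfrac{q}{q-1}\,h\Bigr)=h+\Bigl(\tfrac{q}{q-1}\Bigr)^{q}G^{\Omega }(|V|h^{q})\leq h+\Bigl(\tfrac{q}{q-1}\Bigr)^{q}\Bigl(1-\tfrac{1}{q}\Bigr)^{q}\tfrac{1}{q-1}\,h=\tfrac{q}{q-1}\,h .
\end{equation*}
Then $u_{0}=h$, $u_{n+1}=T(u_{n})$ is increasing and bounded above by $\tfrac{q}{q-1}h$; by monotone convergence the limit $u$ satisfies $u=T(u)$ and $h\leq u\leq \tfrac{q}{q-1}h$, which is the upper bound in (\ref{uh+}). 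Interior Schauder estimates and a bootstrap (using that $f,V$ are locally H\"{o}lder continuous) give $u\in C^{2}(\Omega )$ solving (\ref{diff}), and the sandwich $0\leq u\leq \tfrac{q}{q-1}h$ with $h\in C(\overline{\Omega })$, $h|_{\partial \Omega }=0$, gives $u\in C(\overline{\Omega })$. Finally, since $u$ solves $-\mathcal{L}u+Vu^{q}=f$ with $u\geq h>0$ and $G^{\Omega }(h^{q}V)$ finite by (\ref{cond1-corr}), Theorem \ref{T1}$(ii)$ supplies the lower bound in (\ref{uh+}).

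For part $(ii)$, where $q<0$ and $V\geq 0$, set $\theta =\tfrac{q}{q-1}=\tfrac{1}{1-1/q}\in (0,1)$ and $T(w)=h-G^{\Omega }(Vw^{q})$; since now $t\mapsto t^{q}$ is \emph{decreasing} and $V\geq 0$, $T$ is again monotone increasing on positive functions. Here $h$ is a supersolution, $T(h)\leq h$, and $\theta h$ is a subsolution: using $\theta ^{q}\bigl(1-\tfrac{1}{q}\bigr)^{q}=1$ and (\ref{cond2-corr}),
\begin{equation*}
T(\theta h)=h-\theta ^{q}G^{\Omega }(Vh^{q})\geq h-\theta ^{q}\Bigl(1-\tfrac{1}{q}\Bigr)^{q}\tfrac{1}{1-q}\,h=h-\tfrac{1}{1-q}\,h=\theta h .
\end{equation*}
Iterating from $u_{0}=h$ downwards yields a decreasing sequence with $\theta h\leq u_{n}\leq h$, hence a limit $u$ with $u=T(u)$ and $\theta h\leq u\leq h$; in particular $u>0$ in $\Omega $ and $u(y)\to 0$ as $y\to \partial _{\infty }\Omega =\partial \Omega $. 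Regularity is handled exactly as before, and the lower bound in (\ref{uh-}) is already the inequality $u\geq \theta h=\tfrac{1}{1-1/q}h$. Since $u$ then satisfies (\ref{semilinear2B}) with $f$ on the right-hand side (as an equality) and $G^{\Omega }(h^{q}V)$ is finite by (\ref{cond2-corr}), Theorem \ref{T1}$(iv)$ delivers the upper bound in (\ref{uh-}).

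The main obstacle is the construction step, and within it the verification that the constant multiple $\tfrac{q}{q-1}h$ (resp. $\tfrac{1}{1-1/q}h$) is a super- (resp. sub-) solution of the integral equation: this is exactly where the sharp constant $\bigl(1-\tfrac{1}{q}\bigr)^{q}$ in (\ref{cond1-corr}) and (\ref{cond2-corr}) is consumed, via the identity $\bigl(\tfrac{q}{q-1}\bigr)^{q}\bigl(1-\tfrac{1}{q}\bigr)^{q}=1$, and a larger constant would break the a priori bound that keeps the iteration convergent. The remaining ingredients — monotone convergence to pass to the limit in the integral equation, interior Schauder regularity plus a bootstrap, and continuity up to $\partial \Omega $ — are routine, and in each case the missing one-sided estimate is an immediate application of Theorem \ref{T1}.
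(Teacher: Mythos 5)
Your proposal is correct, but the existence step is carried out by a genuinely different route than the paper's. The paper proves part $(ii)$ by invoking a ready-made classical sub/supersolution existence theorem (it cites \cite{DGR}, Theorem 1.2.3), with supersolution $\overline{u}=h$ and the \emph{non-constant-multiple} subsolution $\underline{u}=h-\lambda ^{q}G^{\Omega }(h^{q}V)$, $\lambda =\frac{1}{1-1/q}$; the verification that $\underline{u}$ is a subsolution reduces to $\underline{u}\geq \lambda h$, and optimizing over $\lambda $ is exactly where the constant $\left(1-\frac{1}{q}\right)^{q}$ is consumed — the same optimization you perform when checking $T(\theta h)\geq \theta h$. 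What you do instead is essentially transplant the paper's proof of Theorem \ref{T4} (the abstract integral-equation version) into the differential setting: monotone iteration of $T$ between the constant-multiple barriers, followed by a regularity upgrade; this buys a self-contained construction (no external sub/supersolution theorem) and makes the link to Theorem \ref{T4} transparent, at the cost of having to justify the regularity yourself. The one point you should not wave away as routine is the continuity of the limit: a pointwise monotone limit of continuous functions need not be continuous, so before the Schauder bootstrap can start you need local equicontinuity of the iterates $u_{n}$ — which does follow, since $Vu_{n}^{q}$ is uniformly bounded on compact subsets (by the barriers and the positivity of $h$ there), whence $G^{\Omega }(Vu_{n}^{q})$ is uniformly $C^{1,\alpha }$ locally — and only then does $Vu^{q}$ become locally H\"{o}lder and $u\in C^{2}(\Omega )$. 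The remaining ingredients (boundary continuity from the sandwich $0\leq u\leq \frac{q}{q-1}h$, resp.\ $\theta h\leq u\leq h$, and the missing one-sided bounds in (\ref{uh+}), (\ref{uh-}) from Theorem \ref{T1} $(ii)$ and $(iv)$) match the paper exactly.
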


Note that the terms in square brackets in both (\ref{uh+}) and (\ref{uh-})
are positive and $<1$; it follows that in both cases $\left( i\right) $ and $%
\left( ii\right) $ $u\simeq h$ in $\Omega $. Since $h\left( x\right) \simeq
d_{\Omega }\left( x\right) :=\limfunc{dist}\left( x,\partial \Omega \right) $%
, we obtain $u(x)\simeq d_{\Omega }(x)$.

In the next theorem we give an abstract version of Theorem \ref{C4} that
provides an existence result together with pointwise estimates of solutions $%
u$ for the following integral equation with $q\in \mathbb{R}\setminus
\left\{ 0\right\} $:%
\begin{equation}
u(x)+\int_{\Omega }K(x,y)\,u(y)^{q}\,V(y)\,dm(y)=h(x)\quad dm-\mathrm{%
a.e.\,\,in}\,\,\Omega .  \label{int}
\end{equation}%
Here $(\Omega, m)$ is a measure space with $\sigma $-finite nonnegative
measure $m$, $0<u<\infty $ $dm$-a.e., and $K:\Omega \times \Omega
\rightarrow {\bar{\mathbb{R}}_{+}}\cup \{+\infty \}$ is a nonnegative
measurable kernel.

The coefficient $V$ is assumed to be a measurable function in $\Omega $ with
a definite sign (either $V\geq 0$, or $V\leq 0$). In fact, we can use $%
d\omega $ in place of $V\,dm$, with an arbitrary $\sigma $-finite measure $%
\omega $ (either nonnegative, or nonpositive) in $\Omega $, where $%
0<u<+\infty $ $d\omega $-a.e., and the integral equation holds $d\omega $%
-a.e.

For a nonnegative Borel measure $\mu $ in $\Omega $, we will write 
\begin{equation*}
K\mu (x)=\int_{\Omega }K(x,y)\,d\mu (y),
\end{equation*}%
and $Kf(x)=K(fdm)(x)$ for a nonnegative measurable function $f$.

\begin{theorem}
\label{thm3}\label{T4} Let $(\Omega,m)$ be a measure space with $\sigma $%
-finite measure $m$, and let $K$ be a nonnegative kernel on $\Omega \times
\Omega $. Let $h$ be a measurable function such that 
\begin{equation}
0<h<+\infty \quad dm\mathrm{-a.e.}\,\,\mathrm{in}\,\,\Omega .  \label{cond-h}
\end{equation}%
Let $V$ be a measurable function in $\Omega $. Then the following statements
hold.

$\left( i\right) $ For $q>1$, and $V\leq 0$, suppose that the following
condition holds, 
\begin{equation}
-K(h^{q}V)(x)\leq \left( 1-\frac{1}{q}\right) ^{q}\frac{1}{q-1}\,h(x)\quad dm%
\mathrm{-a.e.}\,\,\mathrm{in}\,\,\Omega .  \label{condV-}
\end{equation}%
Then \emph{(\ref{int})} has a minimal positive solution $u$, and it satisfies%
\begin{equation}
h(x)\leq u(x)\leq \frac{q}{q-1}\,h(x)\quad \mathrm{in}\,\,\Omega .
\label{ineqV-}
\end{equation}

$\left( ii\right) $ For $q<0$ and $V\geq 0$, suppose that the following
condition holds, 
\begin{equation}
K(h^{q}V)(x)\leq \left( 1-\frac{1}{q}\right) ^{q}\frac{1}{1-q}\,h(x)\quad dm-%
\mathrm{a.e.}\,\,\mathrm{in}\,\,\Omega .  \label{condV+}
\end{equation}%
Then \emph{(\ref{int})} has a maximal positive solution $u$, and it satisfies%
\begin{equation}
\frac{1}{1-\frac{1}{q}}\,h(x)\leq u(x)\leq h(x)\quad dm-\mathrm{a.e.}%
\,\,\quad \mathrm{in}\,\,\Omega .  \label{ineqV+}
\end{equation}
\end{theorem}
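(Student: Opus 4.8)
\textbf{Proof plan for Theorem \ref{T4}.}

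The plan is to treat the two cases by a monotone iteration (successive approximation) scheme in the cone of measurable functions, using the explicit one–variable inequalities that underlie the estimates of Theorem \ref{T1}. I would start from the elementary scalar fact that drives everything: for $q>1$ and $t\ge 0$, one has $t-t^{q}\le \left(1-\tfrac1q\right)t$ on the range where the iteration lives, more precisely the function $\psi(t)=\left(1+(q-1)t\right)^{-1/(q-1)}$ is the solution of $\psi'=-\psi^{q}$, $\psi(0)=1$, and satisfies $0<\psi\le 1$, $\psi$ convex, with $\psi(t)\ge \left(1-\tfrac1q\right)$-type lower control exactly when \eqref{condV-} holds. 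Concretely, set $u_{0}=h$ and define $u_{n+1}$ by $u_{n+1}(x)+\int_{\Omega}K(x,y)u_{n}(y)^{q}V(y)\,dm(y)=h(x)$, i.e. $u_{n+1}=h-K(u_{n}^{q}V)$. Since $V\le 0$, the map $w\mapsto h-K(w^{q}V)$ is monotone increasing in $w$ on $w\ge 0$; the first step gives $u_{1}=h-K(h^{q}V)\ge h=u_{0}$, so by induction $u_{n}$ is increasing. For the upper bound one shows by induction that $u_{n}\le \tfrac{q}{q-1}h$: assuming it, $u_{n+1}=h-K(u_{n}^{q}V)\le h-\left(\tfrac{q}{q-1}\right)^{q}K(h^{q}V)\le h+\left(\tfrac{q}{q-1}\right)^{q}\left(1-\tfrac1q\right)^{q}\tfrac{1}{q-1}h = h+\tfrac{1}{q-1}h=\tfrac{q}{q-1}h$, where the middle inequality is precisely \eqref{condV-} (and here one uses $V\le 0$ so that $-K(h^{q}V)\ge 0$ is what is being bounded). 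Hence $u_{n}\uparrow u$ pointwise to a finite limit with $h\le u\le \tfrac{q}{q-1}h$, and by monotone convergence (the integrands $u_{n}^{q}|V|$ are monotone) $u$ solves \eqref{int}. Minimality: if $\tilde u>0$ is any other solution, then $\tilde u=h-K(\tilde u^{q}V)\ge h=u_{0}$ and inductively $\tilde u\ge u_{n}$, so $\tilde u\ge u$.

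For case $(ii)$, $q<0$ and $V\ge 0$, the relevant scalar object is $\varphi(t)=\left(1-(1-q)t\right)^{1/(1-q)}$, solving $\varphi'=-\varphi^{q}$, $\varphi(0)=1$, which is concave and decreasing with $0<\varphi\le 1$ on the relevant range, and \eqref{condV+} is exactly what keeps the argument of $\varphi$ in $[0,1)$ and forces the lower bound $\varphi(t)\ge \left(1-\tfrac1q\right)^{-1}$ there. Now I would run the iteration from above: set $u_{0}=h$ and $u_{n+1}=h-K(u_{n}^{q}V)$. Since $q<0$, the map $w\mapsto w^{q}$ is decreasing on $w>0$, and $V\ge 0$, so $w\mapsto h-K(w^{q}V)$ is again monotone increasing in $w$; but now $u_{1}=h-K(h^{q}V)\le h=u_{0}$ because $K(h^{q}V)\ge 0$, so the sequence is decreasing. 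The key induction is the lower bound $u_{n}\ge \tfrac{1}{1-1/q}h>0$ (which also keeps $u_{n}^{q}$ well-defined and finite): assuming it, $u_{n+1}=h-K(u_{n}^{q}V)\ge h-\left(\tfrac{1}{1-1/q}h\right)^{q}K(h^{q}V)=h-\left(1-\tfrac1q\right)^{-q}K(h^{q}V)\ge h-\left(1-\tfrac1q\right)^{-q}\left(1-\tfrac1q\right)^{q}\tfrac{1}{1-q}h=h-\tfrac{1}{1-q}h=\tfrac{1}{1-1/q}h$, again using \eqref{condV+}; here $\left(\tfrac{1}{1-1/q}h\right)^{q}=\left(1-\tfrac1q\right)^{-q}h^{q}$ since $\tfrac{1}{1-1/q}>0$. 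Thus $u_{n}\downarrow u$ with $\tfrac{1}{1-1/q}h\le u\le h$, and passing to the limit in the equation (now the integrands $u_{n}^{q}V$ are monotone \emph{increasing} since $u_{n}\downarrow$ and $q<0$, so again monotone convergence applies) shows $u$ solves \eqref{int}. Maximality follows symmetrically: any positive solution $\tilde u$ satisfies $\tilde u=h-K(\tilde u^{q}V)\le h=u_{0}$, and inductively $\tilde u\le u_{n}$ using monotonicity of the iteration map, hence $\tilde u\le u$.

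The main obstacle, and the place to be careful, is justifying the passage to the limit and the finiteness/positivity needed for $u_{n}^{q}$ to make sense at every stage — in case $(ii)$ this is genuinely delicate because $q<0$ means $u^{q}$ blows up where $u$ is small, so one must carry the two-sided bound $\tfrac{1}{1-1/q}h\le u_{n}\le h$ through the induction \emph{before} one can even write the next iterate, i.e. the lower bound is not a by-product but part of the well-definedness. One should also note $h<\infty$ $dm$-a.e. is given but $h$ need not be bounded, so all inequalities are understood pointwise $dm$-a.e. and the kernel $K$ may take the value $+\infty$; since all integrands are nonnegative (case $(i)$: $u_{n}^{q}|V|\ge 0$; case $(ii)$: $u_{n}^{q}V\ge 0$), the integrals $K(u_{n}^{q}|V|)$ are well-defined in $[0,+\infty]$ and the monotone convergence theorem applies without integrability hypotheses, which is what makes the abstract statement go through. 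A secondary point is that in case $(i)$ one must check $u_{n}\ge h>0$ (immediate from $u_{1}\ge u_{0}=h$ and monotonicity) so that $u_{n}^{q}$ is well-defined there too, and that the convexity/concavity of $\psi,\varphi$ is used only implicitly, through the two scalar inequalities $t^{q}\le t$-type bounds that \eqref{condV-} and \eqref{condV+} encode; no differential-equation machinery is needed here, only the algebraic inequalities, which is why Theorem \ref{T4} is the clean abstract skeleton of Theorem \ref{C4}.
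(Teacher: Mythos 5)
Your proposal is correct and follows essentially the same route as the paper: monotone iteration $u_{k+1}=h-K(u_k^qV)$ starting from $u_0=h$, with the sharp constants coming from the tangent-line identity $1-a_*x_*^q=x_*$, $x_*=\frac{1}{1-1/q}$. The only differences are cosmetic: you verify the invariant lower bound $u_k\geq x_* h$ directly in one induction step, whereas the paper tracks the decreasing scalar sequence $b_{k+1}=1-a b_k^q$ and shows $b_k\downarrow x_*$; and you write out case $(i)$ in full, which the paper omits by citing \cite{KV} and \cite{BC}.
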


\begin{remark}
\RM Statement $(i)$ of Theorem \ref{T4} is essentially known, and we include
it here only for the sake of completeness. It holds under a less restrictive
assumption 
\begin{equation}
-K(H^{q}V)(x)\leq \left( 1-\frac{1}{q}\right)^{q^2} \frac{1}{(q-1)^q}
\,H(x)\quad dm\mathrm{-a.e.}\,\,\mathrm{in}\,\,\Omega ,  \label{condV2-}
\end{equation}%
where $H= -K(h^{q}V)$; in this case, $u\simeq h+H$ (see \cite{KV}).
\end{remark}

\section{Some auxiliary material}

\label{SecAux}In this section we prove some lemmas needed for the proofs of
Theorems \ref{T1}, \ref{T2}. Everywhere $M$ stands for an arbitrary weighted
manifold.

\begin{lemma}
\label{L1}Let $v,h$ be $C^{2}$-functions in $\Omega \subseteq M$, and $\phi $
be a $C^{2}$-function on an interval $I\subset \mathbb{R}$ such that $%
v\left( \Omega \right) \subset I$. Then the following identity is true:%
\begin{equation}
\mathcal{L}\left( h\phi \left( v\right) \right) =\phi ^{\prime }(v)\mathcal{L%
}\left( hv\right) +\phi ^{\prime \prime }(v)|\nabla v|^{2}h+\left( \phi
(v)-v\phi ^{\prime }(v)\right) \mathcal{L}h.  \label{Lh}
\end{equation}%
Consequently, if $\phi ^{\prime }\neq 0$ then%
\begin{equation}
-\mathcal{L}\left( hv\right) =-\frac{\mathcal{L}\left( h\phi (v)\right) }{%
\phi ^{\prime }\left( v\right) }+\frac{\phi ^{\prime \prime }(v)}{\phi
^{\prime }\left( v\right) }|\nabla v|^{2}h+\left( \frac{\phi (v)}{\phi
^{\prime }\left( v\right) }-v\right) \mathcal{L}h.  \label{Lh-id}
\end{equation}
\end{lemma}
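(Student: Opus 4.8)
The plan is to prove the identity \eqref{Lh} by a direct computation using the product rule \eqref{product} and the chain rule \eqref{chain} for the weighted Laplace operator, and then to derive \eqref{Lh-id} by dividing through by $\phi'(v)$ and rearranging. Since $\mathcal{L}$ satisfies the same calculus rules as the ordinary Laplacian, no geometry enters beyond \eqref{product} and \eqref{chain}; the whole argument is algebraic bookkeeping.

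\medskip
\noindent
\textbf{Proof.}
First apply the product rule \eqref{product} to the function $h\phi(v)$, treating $h$ and $\phi(v)$ as the two factors:
\begin{equation*}
\mathcal{L}\bigl(h\phi(v)\bigr)=h\,\mathcal{L}\phi(v)+2\langle\nabla h,\nabla\phi(v)\rangle+\phi(v)\,\mathcal{L}h.
\end{equation*}
By the chain rule \eqref{chain}, $\mathcal{L}\phi(v)=\phi'(v)\mathcal{L}v+\phi''(v)|\nabla v|^2$, and $\nabla\phi(v)=\phi'(v)\nabla v$, so the middle term is $2\phi'(v)\langle\nabla h,\nabla v\rangle$. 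Hence
\begin{equation}
\mathcal{L}\bigl(h\phi(v)\bigr)=h\phi'(v)\mathcal{L}v+h\phi''(v)|\nabla v|^2+2\phi'(v)\langle\nabla h,\nabla v\rangle+\phi(v)\,\mathcal{L}h.
\label{step1}
\end{equation}
On the other hand, the product rule \eqref{product} applied to $hv$ gives
\begin{equation*}
\mathcal{L}(hv)=h\,\mathcal{L}v+2\langle\nabla h,\nabla v\rangle+v\,\mathcal{L}h,
\end{equation*}
so that
\begin{equation}
\phi'(v)\,\mathcal{L}(hv)=\phi'(v)h\,\mathcal{L}v+2\phi'(v)\langle\nabla h,\nabla v\rangle+\phi'(v)v\,\mathcal{L}h.
\label{step2}
\end{equation}
Subtracting \eqref{step2} from \eqref{step1}, the two terms $h\phi'(v)\mathcal{L}v$ and $2\phi'(v)\langle\nabla h,\nabla v\rangle$ cancel, leaving
\begin{equation*}
\mathcal{L}\bigl(h\phi(v)\bigr)-\phi'(v)\,\mathcal{L}(hv)=h\phi''(v)|\nabla v|^2+\bigl(\phi(v)-v\phi'(v)\bigr)\mathcal{L}h,
\end{equation*}
which is precisely \eqref{Lh}. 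Finally, assuming $\phi'\neq 0$ on $I$, divide \eqref{Lh} by $\phi'(v)$ and solve for $-\mathcal{L}(hv)$:
\begin{equation*}
-\mathcal{L}(hv)=-\frac{\mathcal{L}\bigl(h\phi(v)\bigr)}{\phi'(v)}+\frac{\phi''(v)}{\phi'(v)}|\nabla v|^2h+\left(\frac{\phi(v)}{\phi'(v)}-v\right)\mathcal{L}h,
\end{equation*}
which is \eqref{Lh-id}. $\qquad\square$

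\medskip
\noindent
There is no real obstacle here; the only point requiring a modicum of care is the correct cancellation of the two cross-terms (the $h\phi'(v)\mathcal{L}v$ term and the gradient cross-term $2\phi'(v)\langle\nabla h,\nabla v\rangle$) when subtracting, and keeping track of the sign and grouping in the coefficient $\phi(v)-v\phi'(v)$ multiplying $\mathcal{L}h$. One should also note that the hypotheses $v,h\in C^2$ and $v(\Omega)\subset I$ with $\phi\in C^2(I)$ are exactly what is needed to legitimately apply \eqref{product} and \eqref{chain} to the compositions and products involved. The division step in \eqref{Lh-id} is valid wherever $\phi'(v)\neq0$, which is the standing assumption in that part of the statement.
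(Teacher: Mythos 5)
Your proof is correct and is essentially the same computation as the paper's: both reduce to the product rule \eqref{product}, the chain rule \eqref{chain}, and the cancellation of the cross-term $2\phi'(v)\langle\nabla h,\nabla v\rangle$. The paper merely packages that cross-term via the auxiliary ($h$-transformed) operator $\widetilde{\mathcal{L}}u=\mathcal{L}u+2\langle\frac{\nabla h}{h},\nabla u\rangle$ before applying the chain rule, whereas you subtract the two expanded displays directly; the underlying algebra is identical.
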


\begin{proof}
For functions $u\in C^{2}\left( \Omega \right) $, consider the following
operator%
\begin{equation*}
\widetilde{\mathcal{L}}u=\frac{1}{h^{2}}\func{div}_{\omega }(h^{2}\nabla u)=%
\frac{1}{\omega h^{2}}\func{div}(\omega h^{2}\nabla u) ,
\end{equation*}%
that is, the weighted Laplace operator of the weighted manifold $\left(
\Omega,h^{2}dm\right) =\left( \Omega ,\omega h^{2}dm_{0}\right) .$ Using the
product rule for $\func{div}_{\omega }$, we obtain%
\begin{equation*}
\widetilde{\mathcal{L}}u=\mathcal{L}u+2\langle \frac{\nabla h}{h},\nabla
u\rangle .
\end{equation*}%
On the other hand, by the product rule (\ref{product}) for $\mathcal{L}$ we
have%
\begin{equation*}
\mathcal{L}\left( hu\right) =h\mathcal{L}u+2\langle \nabla h,\nabla u\rangle
+u\mathcal{L}h,
\end{equation*}%
which implies the identity%
\begin{equation}
\mathcal{L}\left( hu\right) =h\widetilde{\mathcal{L}}u+u\mathcal{L}h.
\label{hv}
\end{equation}%
Using (\ref{hv}) with $u=\phi \left( v\right) $ and applying the chain rule (%
\ref{chain}) for $\widetilde{\mathcal{L}}$, we obtain%
\begin{align*}
\mathcal{L}\left( h\phi (v)\right) & =h\widetilde{\mathcal{L}}\phi (v)+\phi
(v)\mathcal{L}h \\
& =h\left( \phi ^{\prime }(v)\widetilde{\mathcal{L}}v+\phi ^{\prime \prime
}(v)|\nabla v|^{2}\right) +\phi (v)\mathcal{L}h \\
& =\phi ^{\prime }(v)(h\widetilde{\mathcal{L}}v+v\mathcal{L}h)+\phi ^{\prime
\prime }(v)|\nabla v|^{2}h+\left( \phi (v)-v\phi ^{\prime }(v)\right) 
\mathcal{L}h \\
& =\phi ^{\prime }(v)\mathcal{L}\left( hv\right) +\phi ^{\prime \prime
}(v)|\nabla v|^{2}h+\left( \phi (v)-v\phi ^{\prime }(v)\right) \mathcal{L}h,
\end{align*}%
which proves (\ref{Lh}). Then (\ref{Lh-id}) follows immediately from (\ref%
{Lh}).
\end{proof}

\begin{lemma}
\label{ident} Let $\phi $ be a $C^{2}$ function on an interval $I\subset 
\mathbb{R}$ such that $\phi >0$ and $\phi ^{\prime }>0$ in $I$. For two
functions $v,h\in C^{2}\left( \Omega \right) $, $h>0$, set 
\begin{equation*}
u=h\phi \left( v\right)
\end{equation*}%
assuming that $\phi \left( v\right) $ is well-defined, that is, $v\left(
\Omega \right) \subset I.$

If the function $u$ satisfies the inequality%
\begin{equation}
-\mathcal{L}u+Vu^{q}\geq -\mathcal{L}h  \label{uV}
\end{equation}%
in $\Omega $, where $V\in C\left( \Omega \right) $, $q\in \mathbb{R}%
\setminus \left\{ 0\right\} $, then the function $v$ satisfies in $\Omega $
the inequality%
\begin{equation}
-\mathcal{L}\left( hv\right) +h^{q}V\frac{\phi (v)^{q}}{\phi ^{\prime }(v)}%
\geq \left( \frac{\phi (v)-1}{\phi ^{\prime }(v)}-v\right) \mathcal{L}h+%
\frac{\phi ^{\prime \prime }(v)}{\phi ^{\prime }(v)}|\nabla v|^{2}h.
\label{ident-Lh}
\end{equation}

If instead $u$ satisfies%
\begin{equation}
-\mathcal{L}u+Vu^{q}\leq -\mathcal{L}h,  \label{uV-}
\end{equation}%
then \emph{(\ref{ident-Lh})} holds with $\leq $ instead of $\geq $.
\end{lemma}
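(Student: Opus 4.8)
The plan is to derive \eqref{ident-Lh} directly from Lemma~\ref{L1} by substituting the differential inequality \eqref{uV} into the identity \eqref{Lh-id}. First I would apply \eqref{Lh-id}: since $\phi>0$, $\phi'>0$ on $I$ and $v(\Omega)\subset I$, the function $u=h\phi(v)$ is well-defined and we may write
\begin{equation*}
-\mathcal{L}(hv)=-\frac{\mathcal{L}u}{\phi'(v)}+\frac{\phi''(v)}{\phi'(v)}|\nabla v|^{2}h+\left(\frac{\phi(v)}{\phi'(v)}-v\right)\mathcal{L}h.
\end{equation*}
The idea is now to eliminate $\mathcal{L}u$ using the hypothesis $-\mathcal{L}u+Vu^{q}\geq-\mathcal{L}h$, i.e.\ $-\mathcal{L}u\geq Vu^{q}-\mathcal{L}h$.

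The one point requiring care — and the main (though minor) obstacle — is the sign bookkeeping when dividing by $\phi'(v)$. Since $\phi'(v)>0$, the inequality $-\mathcal{L}u\geq Vu^{q}-\mathcal{L}h$ is preserved after dividing by $\phi'(v)$, so
\begin{equation*}
-\frac{\mathcal{L}u}{\phi'(v)}\geq\frac{Vu^{q}}{\phi'(v)}-\frac{\mathcal{L}h}{\phi'(v)}=\frac{Vh^{q}\phi(v)^{q}}{\phi'(v)}-\frac{\mathcal{L}h}{\phi'(v)},
\end{equation*}
where I have substituted $u^{q}=h^{q}\phi(v)^{q}$ (legitimate since $h>0$ and $\phi(v)>0$, so $u>0$ and $u^{q}$ is unambiguous for every real $q$). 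Combining this lower bound with the identity above gives
\begin{equation*}
-\mathcal{L}(hv)\geq\frac{Vh^{q}\phi(v)^{q}}{\phi'(v)}-\frac{\mathcal{L}h}{\phi'(v)}+\frac{\phi''(v)}{\phi'(v)}|\nabla v|^{2}h+\left(\frac{\phi(v)}{\phi'(v)}-v\right)\mathcal{L}h.
\end{equation*}

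Finally I would move the $h^{q}V\phi(v)^{q}/\phi'(v)$ term to the left-hand side and collect the two multiples of $\mathcal{L}h$: the coefficient becomes $-\frac{1}{\phi'(v)}+\frac{\phi(v)}{\phi'(v)}-v=\frac{\phi(v)-1}{\phi'(v)}-v$, which yields exactly \eqref{ident-Lh}. For the second assertion, the hypothesis \eqref{uV-} reverses to $-\mathcal{L}u\leq Vu^{q}-\mathcal{L}h$; dividing by the positive quantity $\phi'(v)$ again preserves the direction, so every inequality above flips consistently and we obtain \eqref{ident-Lh} with $\leq$. No further estimates are needed; the lemma is a pure algebraic rearrangement of Lemma~\ref{L1} together with the hypothesis, and the only thing to watch is that all divisions are by the strictly positive $\phi'(v)$ so that no inequality sign is inadvertently reversed.
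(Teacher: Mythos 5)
Your overall strategy is exactly the paper's: apply identity (\ref{Lh-id}) of Lemma \ref{L1}, replace $\mathcal{L}u$ using the hypothesis, divide by the positive quantity $\phi'(v)$, and collect the two multiples of $\mathcal{L}h$ into $\frac{\phi(v)-1}{\phi'(v)}-v$. However, the one step you flagged as needing care — the sign bookkeeping — is precisely where you slip. From (\ref{uV}), i.e. $-\mathcal{L}u+Vu^{q}\geq-\mathcal{L}h$, isolating $-\mathcal{L}u$ gives
\begin{equation*}
-\mathcal{L}u\geq -Vu^{q}-\mathcal{L}h,\qquad\text{equivalently}\qquad \mathcal{L}u\leq Vu^{q}+\mathcal{L}h,
\end{equation*}
not $-\mathcal{L}u\geq Vu^{q}-\mathcal{L}h$ as you wrote. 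With your sign, the term $\frac{h^{q}V\phi(v)^{q}}{\phi'(v)}$ appears with a plus on the right-hand side of your penultimate display, and moving it to the left of the inequality $\geq$ produces $-\mathcal{L}(hv)-h^{q}V\frac{\phi(v)^{q}}{\phi'(v)}\geq\cdots$, which is not (\ref{ident-Lh}); your claim that the rearrangement "yields exactly (\ref{ident-Lh})" does not follow from the chain as written. With the correct sign the term enters the right-hand side as $-\frac{h^{q}V\phi(v)^{q}}{\phi'(v)}$, and transposing it gives the $+$ in (\ref{ident-Lh}) as required. Everything else — the legitimacy of writing $u^{q}=h^{q}\phi(v)^{q}$ since $u>0$, the preservation of the inequality under division by $\phi'(v)>0$, the computation $-\frac{1}{\phi'(v)}+\frac{\phi(v)}{\phi'(v)}-v=\frac{\phi(v)-1}{\phi'(v)}-v$, and the uniform reversal of all inequalities for the case (\ref{uV-}) — is correct and matches the paper's proof, so the fix is purely local.
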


\begin{proof}
It follows from $u=h\phi (v)$ and (\ref{uV}) that%
\begin{equation}
\mathcal{L}\left( h\phi \left( v\right) \right) \leq h^{q}V\phi \left(
u\right) ^{q}+\mathcal{L}h.  \label{semilinear2a}
\end{equation}%
Substituting this into (\ref{Lh-id}) we obtain%
\begin{equation*}
-\mathcal{L}\left( hv\right) \geq -\frac{h^{q}V\phi \left( u\right) ^{q}+%
\mathcal{L}h}{\phi ^{\prime }\left( v\right) }+\frac{\phi ^{\prime \prime
}(v)}{\phi ^{\prime }\left( v\right) }|\nabla v|^{2}h+\left( \frac{\phi (v)}{%
\phi ^{\prime }\left( v\right) }-v\right) \mathcal{L}h,
\end{equation*}%
whence (\ref{ident-Lh}) follows. The second claim is proved in the same way.
\end{proof}

\begin{lemma}
\label{Cor} Under the hypotheses of Lemma \emph{\ref{ident}}, assume in
addition that $\mathcal{L}h\leq 0$ in $\Omega $ and $0\in I$. If in $I$ 
\begin{equation}
\phi (0)=1,\quad \phi ^{\prime }>0,\quad \phi ^{\prime \prime }\geq 0,
\label{conv-incr}
\end{equation}%
then the function $v$ satisfies the following differential inequality in $%
\Omega $%
\begin{equation}
-\mathcal{L}\left( hv\right) +h^{q}V\frac{\phi (v)^{q}}{\phi ^{\prime }(v)}%
\geq 0.  \label{Lh+}
\end{equation}%
If instead of \emph{(\ref{conv-incr})} we have 
\begin{equation}
\phi (0)=1,\quad \phi ^{\prime }>0,\quad \phi ^{\prime \prime }\leq 0,
\label{conv-incr-a}
\end{equation}%
then $v$ satisfies in $\Omega $ 
\begin{equation}
-\mathcal{L}\left( hv\right) +h^{q}V\frac{\phi (v)^{q}}{\phi ^{\prime }(v)}%
\leq 0.  \label{Lh-}
\end{equation}
\end{lemma}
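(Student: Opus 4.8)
The plan is to derive Lemma \ref{Cor} as an almost immediate consequence of Lemma \ref{ident} by checking the sign of each of the two ``error'' terms on the right-hand side of \eqref{ident-Lh}. Recall that Lemma \ref{ident} gives, under the inequality \eqref{uV},
\[
-\mathcal{L}(hv)+h^{q}V\frac{\phi(v)^{q}}{\phi^{\prime}(v)}\geq\left(\frac{\phi(v)-1}{\phi^{\prime}(v)}-v\right)\mathcal{L}h+\frac{\phi^{\prime\prime}(v)}{\phi^{\prime}(v)}|\nabla v|^{2}h.
\]
So it suffices to show that, under the hypothesis \eqref{conv-incr}, the entire right-hand side is $\geq 0$ pointwise in $\Omega$; then \eqref{Lh+} follows. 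The second claim follows the same way with all inequalities reversed.

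First I would handle the gradient term $\frac{\phi^{\prime\prime}(v)}{\phi^{\prime}(v)}|\nabla v|^{2}h$. Since $h>0$, $|\nabla v|^{2}\geq 0$, and $\phi^{\prime}>0$ in $I$, this term has the same sign as $\phi^{\prime\prime}$; under \eqref{conv-incr} we have $\phi^{\prime\prime}\geq 0$, so this term is $\geq 0$ (and under \eqref{conv-incr-a} it is $\leq 0$). Next I would handle the term $\left(\frac{\phi(v)-1}{\phi^{\prime}(v)}-v\right)\mathcal{L}h$. Here $\mathcal{L}h\leq 0$ by assumption, so it suffices to show the factor $g(v):=\frac{\phi(v)-1}{\phi^{\prime}(v)}-v$ is $\leq 0$ throughout $I$ under \eqref{conv-incr}, so that the product is $\geq 0$. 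Equivalently, since $\phi^{\prime}>0$, I want $\phi(t)-1-t\,\phi^{\prime}(t)\leq 0$ for all $t\in I$. Set $\psi(t)=\phi(t)-1-t\,\phi^{\prime}(t)$. Then $\psi(0)=\phi(0)-1=0$ by \eqref{conv-incr}, and $\psi^{\prime}(t)=\phi^{\prime}(t)-\phi^{\prime}(t)-t\,\phi^{\prime\prime}(t)=-t\,\phi^{\prime\prime}(t)$. Since $\phi^{\prime\prime}\geq 0$, we get $\psi^{\prime}(t)\leq 0$ for $t\geq 0$ and $\psi^{\prime}(t)\geq 0$ for $t\leq 0$; hence $\psi$ attains its maximum on $I$ at $t=0$, giving $\psi(t)\leq \psi(0)=0$ on all of $I$. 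This is the one genuine (if small) computation — a convexity/tangent-line argument showing $\phi(t)\leq 1+t\,\phi^{\prime}(t)$, which is exactly the statement that the value at $0$ lies above the tangent line to $\phi$ at $t$; intuitively it is the convexity of $\phi$ combined with $\phi(0)=1$. Under \eqref{conv-incr-a} the same computation gives $\psi\geq 0$ on $I$, so that factor is $\geq 0$ and, multiplied by $\mathcal{L}h\leq 0$, contributes a term $\leq 0$.

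Combining the two sign determinations: under \eqref{conv-incr} both terms on the right of \eqref{ident-Lh} are $\geq 0$, so their sum is $\geq 0$ and \eqref{Lh+} follows; under \eqref{conv-incr-a} both terms are $\leq 0$, and using the reversed form of \eqref{ident-Lh} supplied by Lemma \ref{ident} (valid when $u$ satisfies \eqref{uV-}) we get \eqref{Lh-}. I do not anticipate any real obstacle here: the only nontrivial point is the elementary inequality $\psi(t)\le 0$ (resp. $\ge 0$), and even that is a one-line monotonicity argument using $\psi(0)=0$ and the sign of $\psi^{\prime}(t)=-t\,\phi^{\prime\prime}(t)$. One should just be slightly careful that the argument applies on the whole interval $I$ (to both sides of $0$), which the sign analysis of $\psi^{\prime}$ handles automatically.
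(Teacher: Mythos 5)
Your proposal is correct and follows essentially the same route as the paper: apply Lemma \ref{ident} and check that both terms on the right of \eqref{ident-Lh} have the required sign, the only substantive point being the elementary inequality $\frac{\phi(v)-1}{\phi'(v)}-v\le 0$ (resp.\ $\ge 0$). The paper proves that inequality via the mean value theorem and monotonicity of $\phi'$, while you differentiate $\psi(t)=\phi(t)-1-t\phi'(t)$; the two arguments are interchangeable one-line convexity computations.
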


\begin{proof}
Consider the case (\ref{conv-incr}). By the mean value theorem, for any $%
v\in I$ there exists $\xi \in \left[ 0,v\right] $ such that%
\begin{equation*}
\frac{\phi \left( v\right) -1}{v}=\frac{\phi \left( v\right) -\phi \left(
0\right) }{v}=\phi ^{\prime }\left( \xi \right) .
\end{equation*}%
By the convexity of $\phi $ we obtain $\phi ^{\prime }\left( \xi \right)
\leq \phi ^{\prime }\left( v\right) $ provided $v>0$, that is%
\begin{equation*}
\frac{\phi \left( v\right) -1}{v}\leq \phi ^{\prime }\left( v\right) \ \ 
\text{for }v>0,
\end{equation*}%
and the opposite inequality in the case $v<0$. It follows that, for all $%
v\in I$,%
\begin{equation*}
\frac{\phi (v)-1}{\phi ^{\prime }(v)}-v\leq 0.
\end{equation*}%
Substituting into (\ref{ident-Lh}) and using also $\mathcal{L}h\leq 0$ and (%
\ref{conv-incr}), we obtain (\ref{Lh+}). The proof in the case (\ref%
{conv-incr-a}) is similar.
\end{proof}

\begin{remark}
\RM Note that in the case $\mathcal{L}h\equiv 0$ the condition $\phi \left(
0\right) =1$ in (\ref{conv-incr}) and (\ref{conv-incr-a}) is not required as
in this case the term 
\begin{equation*}
\left( \frac{\phi (v)-1}{\phi ^{\prime }(v)}-v\right) \mathcal{L}h
\end{equation*}
vanishes identically.
\end{remark}

\begin{lemma}
\label{max}Suppose $\Omega $ is an open subset of $M$ and $F$ is a l.s.c. $%
\mathcal{L}$-superharmonic function in $\Omega $. Suppose $F=F_{1}+F_{2}$,
where 
\begin{equation}
\liminf_{x\rightarrow \partial _{\infty }\Omega }F_{1}(x)\geq 0\ \ \text{%
and\ \ }F_{2}\geq -P,  \label{F12}
\end{equation}%
where $P=G^{\Omega }\mu $ is a Green potential of a positive measure $\mu $
in $\Omega $ so that $P\not\equiv +\infty $ on every component of $\Omega $.
Then $F\geq 0$ in $\Omega $.
\end{lemma}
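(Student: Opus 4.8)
The plan is to run the argument through a smooth compact exhaustion of $\Omega$ and to invoke two standard facts from the potential theory of $\mathcal{L}$ developed in \cite{G}, \cite[Ch.\ 13]{G1}: that an $\mathcal{L}$-superharmonic function dominates its $\mathcal{L}$-harmonic measure average over the smooth boundary of any relatively compact subdomain, and that the Green potential $P=G^{\Omega}\mu$ of a positive measure is a \emph{potential} (vanishing greatest harmonic minorant), so that such averages of $P$ over an exhausting family shrink to $0$. It clearly suffices to treat each connected component of $\Omega$ separately, so I would assume $\Omega$ connected, hence $P\not\equiv+\infty$ on it and $P\in L^{1}_{\mathrm{loc}}(\Omega,m)$.

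First I would fix an exhaustion $\Omega_{1}\subset\Omega_{2}\subset\cdots$, with $\overline{\Omega_{k}}\subset\Omega_{k+1}$ and $\bigcup_{k}\Omega_{k}=\Omega$, by connected relatively compact open sets with smooth boundaries, and write $\omega_{k}^{x}$ for the $\mathcal{L}$-harmonic measure of $\Omega_{k}$ at $x\in\Omega_{k}$. Two ingredients are used. $(a)$ Since $\overline{\Omega_{k}}\subset\Omega$ and $F$ is l.s.c.\ and $\mathcal{L}$-superharmonic, the super-mean-value inequality over the smooth surface $\partial\Omega_{k}$ gives
\[
F(x)\ \ge\ \int_{\partial\Omega_{k}}F\,d\omega_{k}^{x},\qquad x\in\Omega_{k}.
\]
$(b)$ Writing $P_{k}(x):=\int_{\partial\Omega_{k}}P\,d\omega_{k}^{x}$ and integrating the Green decomposition $G^{\Omega}(x,z)=G^{\Omega_{k}}(x,z)+\int_{\partial\Omega_{k}}G^{\Omega}(y,z)\,d\omega_{k}^{x}(y)$ against $\mu$ (Tonelli) yields $P_{k}(x)=P(x)-G^{\Omega_{k}}\mu(x)$; since $G^{\Omega_{k}}\uparrow G^{\Omega}$ along the exhaustion we get $G^{\Omega_{k}}\mu\uparrow P$ pointwise, hence $P_{k}(x)\downarrow 0$ for every $x$ with $P(x)<\infty$.

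Now fix $x$ with $P(x)<\infty$ and $\delta>0$. The hypothesis $\liminf_{y\to\partial_{\infty}\Omega}F_{1}(y)\ge 0$ means there is a compact $K\subset\Omega$ with $F_{1}>-\delta$ on $\Omega\setminus K$; since $\partial\Omega_{k}\cap\overline{\Omega_{k-1}}=\emptyset$, for all large $k$ one has $\partial\Omega_{k}\subset\Omega\setminus K$, so, using $F_{2}\ge -P$,
\[
F=F_{1}+F_{2}\ \ge\ F_{1}-P\ \ge\ -\delta-P\qquad\text{on }\partial\Omega_{k}.
\]
Integrating against $\omega_{k}^{x}$ and combining with the super-mean-value inequality gives $F(x)\ge -\delta-P_{k}(x)$; letting $k\to\infty$ gives $F(x)\ge -\delta$, and then $\delta\downarrow 0$ gives $F(x)\ge 0$. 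This establishes $F\ge 0$ $m$-a.e.\ in $\Omega$ (the set $\{P=+\infty\}$ is $m$-null), and finally, since $F$ is $\mathcal{L}$-superharmonic, $F(x_{0})\ge \frac{1}{m(B)}\int_{B}F\,dm\ge 0$ for every small ball $B=B(x_{0},r)$, so $F\ge 0$ throughout $\Omega$.

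The only points requiring care are the two potential-theoretic inputs — the super-mean-value inequality for $\mathcal{L}$-superharmonic functions over relatively compact smooth subdomains, and the fact that $G^{\Omega}\mu$ is a potential — together with routine integrability bookkeeping (e.g.\ $F$ is $\omega_{k}^{x}$-integrable because on $\partial\Omega_{k}$ its negative part is $\le\delta+P$ with $\int_{\partial\Omega_{k}}P\,d\omega_{k}^{x}=P_{k}(x)<\infty$, and $\omega_{k}^{x}$ does not charge polar sets); all of this is exactly what is set up in \cite{G}, \cite[Ch.\ 13]{G1}, so no essential obstacle arises. An equivalent way to organize the same computation is to note that $F+P$ is $\mathcal{L}$-superharmonic with $F+P\ge F_{1}$, so its greatest harmonic minorant is $\ge 0$; since $P_{k}\to 0$ this minorant coincides with that of $F$, and $F$ dominates its greatest harmonic minorant, whence $F\ge 0$.
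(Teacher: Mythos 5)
Your proof is correct, and it in fact contains the paper's proof as the ``equivalent way to organize the computation'' in your closing remark. The paper's argument is exactly that two-line reduction: $F+P$ is superharmonic with $F+P\geq F_{1}$, hence $\liminf_{x\to\partial_{\infty}\Omega}(F+P)\geq 0$, so the standard maximum principle gives $F+P\geq 0$; then $F$ is a superharmonic majorant of $-P$, and since $P$ is a potential the least superharmonic majorant of $-P$ is $0$, whence $F\geq 0$. Your main argument instead unfolds both cited ingredients by hand through a smooth exhaustion: the super-mean-value inequality over $\partial\Omega_{k}$ replaces the abstract maximum principle, and the identity $P=G^{\Omega_{k}}\mu+P_{k}$ with $P_{k}\downarrow 0$ is precisely the proof that a Green potential has greatest harmonic minorant zero. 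What your version buys is self-containedness and explicit integrability bookkeeping (which you handle correctly: the negative part of $F$ on $\partial\Omega_{k}$ is controlled by $\delta+P$ with $P_{k}(x)<\infty$); what the paper's version buys is brevity, at the cost of leaning on \cite{AG} and \cite[Sec.~5.4.3]{G1}. The only step I would tighten is the final upgrade from ``$F\geq 0$ off the $m$-null set $\{P=+\infty\}$'' to ``$F\geq 0$ everywhere'': the volume mean-value inequality $F(x_{0})\geq\frac{1}{m(B)}\int_{B}F\,dm$ over metric balls is not the canonical form of superharmonicity for a general weighted Laplacian, and it is cleaner to average over small smooth spheres against harmonic measure (which does not charge the polar set $\{P=+\infty\}$), as you already do on $\partial\Omega_{k}$; this is a cosmetic repair, not a gap.
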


\begin{proof}
Indeed, the function $F+P$ is obviously superharmonic, and $F+P\geq F_{1}$.
Hence $\liminf_{x\rightarrow \partial _{\infty }\Omega }(F+P)(x)\geq 0$, and
by the standard form of the maximum principle $F+P\geq 0$ on $\Omega $ (cf. 
\cite{AG}, \cite[Sec. 5.4.3]{G1}). Hence $F$ is a superharmonic majorant of $%
-P$, whose least superharmonic majorant must be zero (with the same proof as
in the classical case \cite[Theorem 4.2.6]{AG}), which yields $F\geq 0$.
\end{proof}

The following version of the maximum principle will be frequently used.

\begin{lemma}
\label{Lemmax}Let $\Omega $ be an open subset of $M$ and let $v\in
C^{2}\left( \Omega \right) $ satisfy%
\begin{equation*}
\left\{ 
\begin{array}{ll}
-\mathcal{L}v\geq f & \text{in\ }\Omega , \\ 
\liminf_{x\rightarrow \partial _{\infty }\Omega }v\left( x\right) \geq 0, & 
\end{array}%
\right.
\end{equation*}%
where $f\in C\left( \Omega \right) $ such that $G^{\Omega }f$ is well
defined in $\Omega $. Then for all $x\in \Omega $%
\begin{equation}
v\left( x\right) \geq G^{\Omega }f\left( x\right) .  \label{vx}
\end{equation}
\end{lemma}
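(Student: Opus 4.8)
\textbf{Proof proposal for Lemma \ref{Lemmax}.}

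The plan is to reduce this to the general maximum principle of Lemma \ref{max} by splitting $v$ into a part that is superharmonic and a part controlled from below by a Green potential. First I would handle the case in which $f\ge 0$: then $G^{\Omega}f$ is a genuine Green potential, the function $F:=v-G^{\Omega}f$ satisfies $-\mathcal{L}F\ge 0$ (since $-\mathcal{L}(G^{\Omega}f)=f$ in the distributional/classical sense on the domain where the potential is finite), hence $F$ is $\mathcal{L}$-superharmonic, and $\liminf_{x\to\partial_\infty\Omega}F(x)\ge 0$ because $G^{\Omega}f\ge 0$ and $\liminf v\ge 0$. Applying the standard maximum principle (the case $F_2\equiv 0$ of Lemma \ref{max}, or the references \cite{AG}, \cite[Sec.~5.4.3]{G1}) gives $F\ge 0$, i.e. \eqref{vx}.

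For general signed $f$ with $G^{\Omega}f$ well-defined, I would write $f=f_+-f_-$ and correspondingly $G^{\Omega}f=G^{\Omega}f_+-G^{\Omega}f_-$, where by the well-definedness assumption at least one of the two potentials is finite at each point; since $G^{\Omega}f$ is a function (not identically $+\infty$), in fact $G^{\Omega}f_-\not\equiv+\infty$ on each component (otherwise $G^{\Omega}f$ would fail to be well-defined there). Set $P:=G^{\Omega}f_-$, a Green potential of the positive measure $f_-\,dm$ with $P\not\equiv+\infty$ on every component. Then decompose
\begin{equation*}
F:=v-G^{\Omega}f=\bigl(v-G^{\Omega}f_+\bigr)+G^{\Omega}f_-=:F_1+F_2,
\end{equation*}
where $F_1=v-G^{\Omega}f_+$ and $F_2=P\ge 0\ge -P$. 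One checks that $F$ is $\mathcal{L}$-superharmonic: indeed $-\mathcal{L}F=-\mathcal{L}v+\mathcal{L}(G^{\Omega}f)\ge f-f=0$ wherever the potentials are smooth, and lower semicontinuity together with the super-mean-value property on the exceptional (polar) set of $P$ is standard. Moreover $\liminf_{x\to\partial_\infty\Omega}F_1(x)\ge\liminf v-\limsup G^{\Omega}f_+\ge 0$ since a Green potential tends to $0$ along $\partial_\infty\Omega$ (or at least has nonnegative liminf there). Now Lemma \ref{max} applies directly and yields $F\ge 0$, which is \eqref{vx}.

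The main obstacle I expect is the regularity bookkeeping: $v\in C^2$ but $G^{\Omega}f_{\pm}$ need only be superharmonic (possibly $+\infty$ on a polar set), so "$F$ is $\mathcal{L}$-superharmonic" must be justified via the potential-theoretic definition (lower semicontinuity plus super-mean-value inequality) rather than by pointwise differentiation everywhere; and one must make sure the decomposition respects the hypothesis "$G^{\Omega}f$ well-defined" so that $P\not\equiv+\infty$ on each component, which is exactly what Lemma \ref{max} requires. Once those points are in place, the argument is just an invocation of Lemma \ref{max}; no computation is needed beyond $-\mathcal{L}(G^{\Omega}g)=g$ for $g\ge 0$.
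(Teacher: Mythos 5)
Your reduction to Lemma \ref{max} is the right idea, but the decomposition has the signs the wrong way around, and this is a genuine gap rather than bookkeeping. You place $F_1=v-G^{\Omega}f_+$ and justify $\liminf_{x\rightarrow \partial _{\infty }\Omega }F_1\geq 0$ by asserting that a Green potential ``tends to $0$ along $\partial_\infty\Omega$ (or at least has nonnegative liminf there).'' The second alternative is true but useless (you need $\limsup G^{\Omega}f_+\leq 0$, not $\liminf G^{\Omega}f_+\geq 0$, since you are \emph{subtracting} the potential), and the first is false in general: a Green potential of a positive measure need not vanish at $\partial_\infty\Omega$ (think of $\Omega=\mathbb{R}^n$ with $f_+$ supported on a sequence of balls escaping to infinity, or of irregular boundary points). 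The same error already appears in your warm-up case $f\geq 0$: from $G^{\Omega}f\geq 0$ and $\liminf v\geq 0$ you can only conclude $\liminf(v-G^{\Omega}f)\leq \liminf v$, not $\geq 0$. The whole point of Lemma \ref{max} is that the potential part requires \emph{no} boundary control: it is absorbed into $F_2\geq -P$ and killed by the ``least superharmonic majorant of $-P$ is zero'' mechanism. So the correct assignment is the opposite of yours: $F_1=v+G^{\Omega}f_-$ (adding a nonnegative function, so $\liminf F_1\geq\liminf v\geq 0$ trivially) and $F_2=-G^{\Omega}f_+\geq -P$ with $P=G^{\Omega}f_+$. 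This is what the paper does.

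A second, smaller point: to invoke Lemma \ref{max} with $P=G^{\Omega}f_+$ you must ensure $P\not\equiv+\infty$ on each component, and nothing in the hypotheses guarantees $G^{\Omega}f_+<\infty$ (well-definedness only says one of $G^{\Omega}f_\pm$ is finite; the trivial case is $G^{\Omega}f_-\equiv+\infty$, where (\ref{vx}) holds vacuously). The paper handles this by first approximating $f$ from below by functions $f_n\uparrow f$ whose positive parts are compactly supported, applying Lemma \ref{max} to $v+G^{\Omega}f_n^{-}-G^{\Omega}f_n^{+}$ for each fixed $n$, and passing to the limit by monotone convergence. Your proposal needs both corrections — the swap of roles in the decomposition and the approximation step — before it becomes a proof.
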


\begin{proof}
If $G^{\Omega }f_{-}=+\infty $ then (\ref{vx}) is trivially satisfied.
Hence, assume in the sequel that $G^{\Omega }f_{-}<\infty .$ Let us
approximate $f$ from below by a sequence $\left\{ f_{n}\right\} $ of $C^{1}$
functions in $\Omega $ such that $f_{n}\uparrow f$ as $n\rightarrow \infty \ 
$and $G^{\Omega }f_{n}^{-}<\infty $ (where $f_{n}^{\pm }:=\left(
f_{n}\right) _{\pm }$). Moreover, we can also assume that $f_{n}^{+}$ is
compactly supported in $\Omega $.

Fix $n$ and consider in $\Omega $ two functions%
\begin{equation*}
F_{1}=v+G^{\Omega }f_{n}^{-}\ \ \ \text{and\ \ \ }F_{2}=-G^{\Omega
}f_{n}^{+}.
\end{equation*}%
The hypotheses (\ref{F12}) of Lemma \ref{max} are obviously satisfied. The
function%
\begin{equation*}
F=v+G^{\Omega }f_{n}^{-}-G^{\Omega }f_{n}^{+}
\end{equation*}%
is superharmonic in $\Omega $ since%
\begin{equation*}
-\mathcal{L}F=-\mathcal{L}v+f_{n}^{-}-f_{n}^{+}=f-f_{n}\geq 0.
\end{equation*}%
By Lemma \ref{max} we conclude that $F\geq 0$ in $\Omega $ and, hence, 
\begin{equation*}
v\geq G^{\Omega }f_{n}^{+}-G^{\Omega }f_{n}^{-}.
\end{equation*}%
Letting $n\rightarrow \infty $ and using the convergence theorems we obtain (%
\ref{vx})
\end{proof}

\section{Proof of Theorem \protect\ref{T2}}

\label{SecProofB}We start the proof with a particular case of Theorem \ref%
{T2} where the idea of the proof is most transparent and not buried in
technical complications.

\begin{proof}[Proof of Theorem \protect\ref{T2} in the special case $h>0$, $%
u>0$ in $\overline{\Omega }$, and $V\in C(\overline{\Omega })$]
In this case the function $G^{\Omega }(h^{q}V)\left( x\right) $ is finite
for all $x\in \Omega $.

Choose a function $\phi $ (to be used in Lemma~\ref{Cor}) to solve the
initial value problem 
\begin{equation}
\phi ^{\prime }(s)=\phi (s)^{q},\ \ \phi \left( 0\right) =1.  \label{fi'}
\end{equation}%
For $q=1$ this gives 
\begin{equation}
\phi (s)=e^{s},\quad s\in \mathbb{R},  \label{exp}
\end{equation}%
while for $q\not=1$ we obtain 
\begin{equation}
\phi (s)=[(1-q)s+1]^{\frac{1}{1-q}},\quad s\in I_{q},  \label{phi}
\end{equation}%
where the domain $I_{q}$ of $\phi $ is given by: 
\begin{equation}
I_{q}=%
\begin{cases}
\,(-\infty ,\frac{1}{q-1})\quad & \text{if}\,\,q>1, \\ 
\,(-\infty ,+\infty )\quad & \text{if}\,\,q=1, \\ 
\,(-\frac{1}{1-q},+\infty ) & \text{if}\,\,q<1%
\end{cases}
\label{Iq}
\end{equation}%
(see Fig. \ref{fig}).

\begin{figure}[tbh]
\centering
\includegraphics[scale=1.]{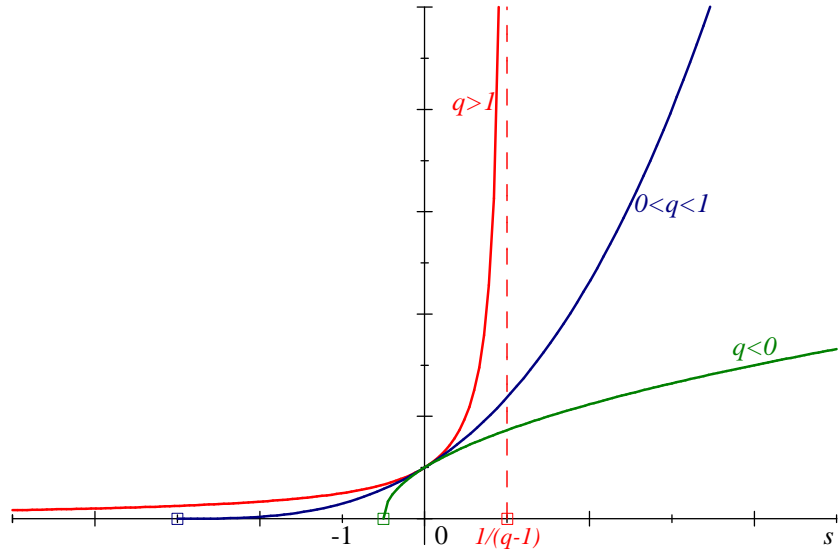}
\caption{Examples of the function $\protect\phi $ in three cases $q>1$, $%
0<q<1$, $q<0$. The boxed points have the abscissa $\frac{1}{q-1}$.}
\label{fig}
\end{figure}

\bigskip Note that in all cases $\phi \left( I_{q}\right) =\left( 0,\infty
\right) $. Also we have 
\begin{equation}
\phi ^{\prime }(s)=[(1-q)s+1]^{\frac{q}{1-q}},\quad \phi ^{\prime \prime
}(s)=q[(1-q)s+1]^{\frac{2q-1}{1-q}}.  \label{der}
\end{equation}%
In particular, $\phi ^{\prime }>0$ in $I_{q}$, whereas $\phi ^{\prime \prime
}>0$ for $q>0$ and $\phi ^{\prime \prime }<0$ for $q<0$. Consequently, the
inverse function $\phi ^{-1}$ is well-defined on $\left( 0,\infty \right) $.

In the case $0<q<1$ it will be convenient for us to extend the domain of $%
\phi $ to all $s\leq -\frac{1}{1-q}$ by setting $\phi \left( s\right) =0$ so
that in this case we have for all $s\in \left( -\infty ,\infty \right) $%
\begin{equation}
\phi \left( s\right) =[(1-q)s+1]_{+}^{\frac{1}{1-q}}.  \label{fi+}
\end{equation}

Observe that all the estimates (\ref{main-exp}), (\ref{main1}), (\ref{main})
that we need to prove in the case $q>0$ can be written in the unified form%
\begin{equation}
\frac{u(x)}{h(x)}\geq \phi \left( -\frac{1}{h(x)}G^{\Omega
}(h^{q}V)(x)\right) ,  \label{u/h}
\end{equation}%
for all $x\in \Omega $. Similarly, estimate (\ref{main-negative}) in the
case $q<0$ is equivalent to the opposite inequality%
\begin{equation}
\frac{u(x)}{h(x)}\leq \phi \left( -\frac{1}{h(x)}G^{\Omega
}(h^{q}V)(x)\right) .  \label{u/h-}
\end{equation}
Since by hypothesis the functions $h$ and $u$ are positive in $\overline{%
\Omega }$, the function 
\begin{equation}
v=\phi ^{-1}\left( \frac{u}{h}\right)  \label{v}
\end{equation}%
is well-defined in $\overline{\Omega }$ and belongs to the class $%
C^{2}\left( \Omega \right) \cap C\left( \overline{\Omega }\right) $.

Consider first the case $q>0$. In this case we will deduce (\ref{u/h}) from
the following inequality for $v$:%
\begin{equation}
v\left( x\right) \geq -\frac{1}{h\left( x\right) }G^{\Omega }(h^{q}V)\left(
x\right) ,  \label{lower-v}
\end{equation}%
for all $x\in \Omega $. Indeed, if (\ref{lower-v}) holds then applying $\phi 
$ to both sides of (\ref{lower-v}) and observing that $\phi \left( v\right) =%
\frac{u}{h}$, we obtain (\ref{u/h}). However, we should first verify that
the both sides of (\ref{lower-v}) are in the domain of $\phi $. In the cases 
$q=1$ and $0<q<1$ the (extended) domain of $\phi $ is $\left( -\infty
,+\infty \right) $, so that there is no problem. In the case $q>1$ we have $%
v\left( x\right) \in I_{q}=\,(-\infty ,\frac{1}{q-1})$ by (\ref{v}), which
implies that the right hand side of (\ref{lower-v}), being bounded by $%
v\left( x\right) $, is also in $I_{q}$. This argument also shows that in $%
\Omega $ 
\begin{equation*}
\frac{1}{q-1}>-\frac{1}{h\left( x\right) }G^{\Omega }(h^{q}V)\left( x\right)
,
\end{equation*}%
which proves (\ref{main-cond}).

To prove (\ref{lower-v}) observe that the function $u=h\phi \left( v\right) $
satisfies 
\begin{equation*}
-\mathcal{L}u+Vu^{q}\geq -\mathcal{L}h\geq 0
\end{equation*}%
in $\Omega $ as required by Lemma \ref{Cor}. In the case $q>0$ the function $%
\phi $ satisfies (\ref{conv-incr}), and we obtain by inequality (\ref{Lh+})
of Lemma \ref{Cor} and by (\ref{fi'}) that in $\Omega $ 
\begin{equation}
-\mathcal{L}\left( hv\right) +h^{q}V\geq 0.  \label{ineq}
\end{equation}%
Since $u\geq h$ on $\partial \Omega $, it follows that on $\partial \Omega $%
\begin{equation*}
hv=h\phi ^{-1}\left( \frac{u}{h}\right) \geq h\phi ^{-1}\left( 1\right) =0.
\end{equation*}%
Since $hv$ satisfies (\ref{ineq}) and the boundary condition $hv\geq 0$ on $%
\partial \Omega $, we obtain by the maximum principle that in $\Omega $%
\begin{equation}
hv\geq -G^{\Omega }\left( h^{q}V\right) ,  \label{hv>}
\end{equation}%
which is equivalent to (\ref{lower-v}).

Consider now the case $q<0$. Then we have 
\begin{equation*}
-\mathcal{L}u+Vu^{q}\leq -\mathcal{L}h
\end{equation*}%
and, hence, obtain by inequality (\ref{Lh-}) of Lemma~\ref{Cor} and (\ref%
{fi'}) that in $\Omega $, 
\begin{equation}
-\mathcal{L}\left( hv\right) +h^{q}V\leq 0.  \label{ineq-a}
\end{equation}%
In this case we have $u\leq h$ on $\partial \Omega $, which implies $hv\leq
0 $ on $\partial \Omega $. Using (\ref{ineq-a}) with this boundary
condition, we obtain that in $\Omega $ 
\begin{equation*}
hv\leq -G^{\Omega }(h^{q}V)
\end{equation*}%
and, hence,%
\begin{equation}
v\leq -\frac{1}{h}G^{\Omega }(h^{q}V).  \label{upper-v1}
\end{equation}%
Since $v\left( x\right) \in I_{q}=(-\frac{1}{1-q},+\infty )$, it follows
that both sides of (\ref{upper-v1}) belong to $I_{q}$. Consequently, we have%
\begin{equation*}
-\frac{1}{1-q}<-\frac{1}{h}G^{\Omega }(h^{q}V),
\end{equation*}%
which proves (\ref{main-cond}). Applying $\phi $ to both sides of (\ref%
{upper-v1}), we obtain (\ref{u/h-}) and, hence, (\ref{main-negative}).
\end{proof}

\begin{proof}[Proof of Theorem~\protect\ref{T2} in the general case]
We will use the same function $\phi $ as defined above by (\ref{exp})-(\ref%
{phi}), but it will be convenient to extend the domain $I_{q}$ of $\phi $ to
the endpoints of the interval $I_{q}$ by taking the limits of $\phi $ at the
endpoints. The extended domain of $\phi $ is therefore the interval%
\begin{equation*}
\overline{I}_{q}:=%
\begin{cases}
\,[-\infty ,\frac{1}{q-1}]\quad & \text{if}\,\,q>1, \\ 
\,[-\infty ,+\infty ]\quad & \text{if}\,\,q=1, \\ 
\,[-\frac{1}{1-q},+\infty ] & \text{if}\,\,q<1.%
\end{cases}%
\end{equation*}%
Moreover, in the case $0<q<1$ we extend $\phi \left( s\right) $ further to
all $s\in \left[ -\infty ,+\infty \right] $ by using (\ref{fi+}).

With these extensions the required estimates (\ref{main-exp}), (\ref{main1})
and (\ref{main}) in the case $q>0$ can be written in the unified form (\ref%
{u/h}), and the estimate (\ref{main-negative}) -- in the form (\ref{u/h-}).

Consider first the case $q>0$. For any $\varepsilon >0$, set%
\begin{equation*}
u_{\varepsilon }=u+\varepsilon
\end{equation*}%
and define the function $v_{\varepsilon }$ in $\Omega $ via 
\begin{equation*}
v_{\varepsilon }=\phi ^{-1}\left( \frac{u_{\varepsilon }}{h}\right) ,
\end{equation*}%
where $\phi $ is the same as above. Since $u_{\varepsilon }$ and $h$ are
positive in $\Omega $, the function $v_{\varepsilon }$ is well-defined in $%
\Omega $ and belongs to $C^{2}(\Omega )$. Note also that $v_{\varepsilon
}\left( \Omega \right) \subset I_{q}$.

Applying identity (\ref{Lh-id}) to functions $h,v_{\varepsilon }\in
C^{2}\left( \Omega \right) $, we obtain%
\begin{equation*}
-\mathcal{L}\left( hv_{\varepsilon }\right) =-\frac{\mathcal{L}\left( h\phi
(v_{\varepsilon })\right) }{\phi ^{\prime }\left( v\right) }+\frac{\phi
^{\prime \prime }(v_{\varepsilon })}{\phi ^{\prime }\left( v_{\varepsilon
}\right) }|\nabla v_{\varepsilon }|^{2}h+\left( \frac{\phi (v_{\varepsilon })%
}{\phi ^{\prime }\left( v_{\varepsilon }\right) }-v_{\varepsilon }\right) 
\mathcal{L}h.
\end{equation*}%
Since%
\begin{equation*}
-\mathcal{L}\left( h\phi (v_{\varepsilon })\right) =-\mathcal{L}%
u_{\varepsilon }=-\mathcal{L}u,
\end{equation*}%
it follows%
\begin{equation}
-\mathcal{L}(hv_{\varepsilon })=\frac{-\mathcal{L}u}{\phi ^{\prime
}(v_{\varepsilon })}+\frac{\phi ^{\prime \prime }(v_{\varepsilon })}{\phi
^{\prime }\left( v_{\varepsilon }\right) }|\nabla v_{\varepsilon
}|^{2}h+\left( \frac{\phi (v_{\varepsilon })}{\phi ^{\prime }\left(
v_{\varepsilon }\right) }-v_{\varepsilon }\right) \mathcal{L}h.  \label{he}
\end{equation}%
Observe also that by (\ref{fi'})%
\begin{equation}
\phi ^{\prime }(v_{\varepsilon })=\phi (v_{\varepsilon })^{q}=\left( \frac{%
u_{\varepsilon }}{h}\right) ^{q}.  \label{fe}
\end{equation}%
Since $q>0$, we have by (\ref{semilinearA}) 
\begin{equation*}
-\mathcal{L}u\geq -Vu^{q}-\mathcal{L}h.
\end{equation*}%
Substituting this and (\ref{fe}) into (\ref{he}), we obtain 
\begin{equation*}
-\mathcal{L}(hv_{\varepsilon })\geq -h^{q}\left( \frac{u}{u_{\varepsilon }}%
\right) ^{q}V+\frac{\phi ^{\prime \prime }(v_{\varepsilon })}{\phi ^{\prime
}\left( v_{\varepsilon }\right) }|\nabla v_{\varepsilon }|^{2}h+\left( \frac{%
\phi (v_{\varepsilon })-1}{\phi ^{\prime }\left( v_{\varepsilon }\right) }%
-v_{\varepsilon }\right) \mathcal{L}h.
\end{equation*}%
Since $\phi $ satisfies (\ref{conv-incr}) and, hence, the last two terms on
the right-hand side of the preceding inequality are nonnegative (cf. the
proof of Lemma \ref{Cor}), we arrive at 
\begin{equation}
-\mathcal{L}(hv_{\varepsilon })\geq -h^{q}\left( \frac{u}{u_{\varepsilon }}%
\right) ^{q}V\ \ \text{in }\Omega .  \label{Lhv}
\end{equation}%
In the case $q\not=1$, $q>0$ we have by (\ref{phi})%
\begin{equation*}
\phi ^{-1}\left( s\right) =\frac{s^{1-q}-1}{1-q},\ \ s>0,
\end{equation*}%
and, hence, in $\Omega $ 
\begin{equation*}
hv_{\varepsilon }=h\phi ^{-1}\left( \frac{u_{\varepsilon }}{h}\right) =\frac{%
1}{1-q}\left( h^{q}u_{\varepsilon }^{1-q}-h\right) .
\end{equation*}%
It follows that, for all $y\in \partial \Omega $, 
\begin{equation*}
\lim_{x\rightarrow y,\,x\in \Omega }h(x)v_{\varepsilon }(x)=\frac{1}{1-q}%
\left( h^{q}(y)u_{\varepsilon }(y)^{1-q}-h(y)\right) \geq 0,
\end{equation*}%
since $u_{\varepsilon }(y)\geq h(y)+\varepsilon >h(y)$.

For $q=1$ we have $\phi ^{-1}\left( s\right) =\ln s$ and, hence, in $\Omega $%
\begin{equation}
hv_{\varepsilon }=h\ln \left( \frac{u_{\varepsilon }}{h}\right) .
\label{hve}
\end{equation}%
For any $y\in \partial \Omega $ such that $h(y)>0$, we obtain%
\begin{equation*}
\lim_{x\rightarrow y,\,x\in \Omega }h(x)v_{\varepsilon }(x)=h(y)\ln \left( 
\frac{u_{\varepsilon }(y)}{h(y)}\right) >0,
\end{equation*}%
and if $h(y)=0$, then, using $u_{\varepsilon }\geq \varepsilon $, we obtain
from (\ref{hve})%
\begin{equation}
\lim_{x\rightarrow y,\,x\in \Omega }h(x)v_{\varepsilon }(x)=0.  \label{hvd}
\end{equation}%
Hence, in the case $q>0$, we can extend $hv_{\varepsilon }$ by continuity to 
$\overline{\Omega }$ so that $hv_{\varepsilon }\in C(\overline{\Omega })\cap
C^{2}(\Omega )$ and 
\begin{equation*}
hv_{\varepsilon }\geq 0\ \ \text{on}\,\,\partial \Omega .
\end{equation*}%
Note that $h^{q}\left( \frac{u}{u_{\varepsilon }}\right) ^{q}V\in C(\Omega ) 
$ and $G^{\Omega }\left( h^{q}\left( \frac{u}{u_{\varepsilon }}\right)
^{q}V\right) $ is well-defined in $\Omega $, since 
\begin{equation*}
G^{\Omega }\left( h^{q}\left( \frac{u}{u_{\varepsilon }}\right) ^{q}V_{\pm
}\right) \leq G^{\Omega }\left( h^{q}V_{\pm }\right) ,
\end{equation*}%
and $G^{\Omega }\left( h^{q}V\right) $ is well-defined by hypothesis. Hence,
by the maximum principle of Lemma \ref{Lemmax}, we conclude from (\ref{Lhv})
and (\ref{hvd}) that%
\begin{equation*}
hv_{\varepsilon }\geq -G^{\Omega }\left( h^{q}\left( \frac{u}{u_{\varepsilon
}}\right) ^{q}V\right)
\end{equation*}%
and, hence, 
\begin{equation}
v_{\varepsilon }\geq -\frac{1}{h}G^{\Omega }\left( h^{q}\left( \frac{u}{%
u_{\varepsilon }}\right) ^{q}V\right) \text{ in }\Omega .  \label{eps-v}
\end{equation}

Assume now $q\geq 1$. Assume also that $G^{\Omega }\left( h^{q}V_{+}\right)
\not\equiv +\infty $ in $\Omega $, because otherwise, (\ref{main-exp}), (\ref%
{main-cond}) and (\ref{main1}) are trivially satisfied, and so there is
nothing to prove. Let us first show that under these assumptions $u>0$ in $%
\Omega $. Observe that if $G^{\Omega }\left( h^{q}V_{+}\right) \not\equiv
+\infty $ in $\Omega $, then $G^{\Omega }\left( h^{q}V_{+}\right)
(x)<+\infty $ for every $x\in \Omega $. Indeed, for an open set $\Omega
^{\prime }\Subset \Omega $ with smooth boundary, fix a function $\eta \in
C_{0}^{\infty }(\Omega )$ such that $\eta =1$ in $\Omega ^{\prime }$. Then
the function $G^{\Omega }\left( h^{q}V_{+}\right) -G^{\Omega }\left( \eta
h^{q}V_{+}\right) $ is harmonic in $\Omega ^{\prime }$, and $G^{\Omega
}\left( \eta h^{q}V_{+}\right) $ is bounded in $\Omega $ since $\eta
h^{q}V_{+}\in C(\overline{\Omega })$. Consequently, $G^{\Omega }\left(
h^{q}V_{+}\right) $ is finite in $\Omega ^{\prime }$, and hence in $\Omega $.

It follows from (\ref{eps-v}) and $u\leq u_{\varepsilon }$, that%
\begin{equation}
v_{\varepsilon }\geq -\frac{1}{h}G^{\Omega }\left( h^{q}V_{+}\right) .
\label{vI}
\end{equation}%
Since the value $v_{\varepsilon }=\phi ^{-1}\left(\frac{u_{\varepsilon }}{h}%
\right) $ belongs to $I_{q}$ and the value of the right hand side of (\ref%
{vI}) lies in $[-\infty ,0]$, which, in the present case $q\geq 1$, is
contained in $\overline{I}_{q}$, we can apply $\phi $ to both sides of this
inequality and obtain%
\begin{equation}
u_{\varepsilon }\geq h\phi \left( -\frac{G^{\Omega }\left( h^{q}V_{+}\right) 
}{h}\right) .  \label{est-l}
\end{equation}%
Letting $\varepsilon \rightarrow 0$ we obtain%
\begin{equation*}
u\geq h\phi \left( -\frac{G^{\Omega }\left( h^{q}V_{+}\right) }{h}\right)
\quad \text{in}\,\,\Omega .
\end{equation*}%
Since $G^{\Omega }\left( h^{q}V_{+}\right) <\infty $, it follows that $u>0$
in $\Omega $ as was claimed.

Let us return to (\ref{eps-v}). Since $v_{\varepsilon }\in I_{q}$ and,
hence, the right hand side of (\ref{eps-v}) lies in $\overline{I}_{q}$, we
can apply $\phi $ to the both sides of this inequality and obtain%
\begin{equation}
u_{\varepsilon }\geq h\phi \left( -\frac{G^{\Omega }\left( h^{q}\left( \frac{%
u}{u_{\varepsilon }}\right) ^{q}V\right) }{h}\right) \ \ \text{in }\Omega ,
\label{eps}
\end{equation}%
The positivity of $u$ in $\Omega $ implies $\frac{u}{u_{\varepsilon }}%
\uparrow 1$ in $\Omega $ as $\varepsilon \rightarrow 0$, whence by the
monotone convergence theorem, 
\begin{equation}
G^{\Omega }\left( h^{q}\left( \frac{u}{u_{\varepsilon }}\right) ^{q}V\right)
\rightarrow G^{\Omega }\left( h^{q}V\right) \ \ \text{as }\varepsilon
\rightarrow 0  \label{mon-conv}
\end{equation}%
pointwise in $\Omega $. In particular, we have, for any $x\in \Omega $, 
\begin{equation}
-\frac{G^{\Omega }\left( h^{q}V\right) \left( x\right) }{h\left( x\right) }%
\in \overline{I}_{q}.  \label{G}
\end{equation}%
Letting $\varepsilon \rightarrow 0$ in (\ref{eps}), we deduce, for $q\geq 1$%
, 
\begin{equation*}
u\geq h\phi \left( -\frac{G^{\Omega }\left( h^{q}V\right) }{h}\right) \ \ 
\text{in }\Omega \text{,}
\end{equation*}%
which proves (\ref{main-exp}) and (\ref{main1}). In the case $q>1$, it
follows that%
\begin{equation*}
\phi \left( -\frac{G^{\Omega }\left( h^{q}V\right) }{h}\right) \leq \frac{u}{%
h}<\infty
\end{equation*}%
and, hence,%
\begin{equation*}
-\frac{G^{\Omega }\left( h^{q}V\right) }{h}<\frac{1}{q-1},
\end{equation*}%
which proves (\ref{main-cond}).

Assume now $0<q<1$. We employ the same argument up to (\ref{eps-v}). The
extended function $\phi $ is defined in this case on $\left[ -\infty
,+\infty \right] $ by (\ref{fi+}). Applying $\phi $ to the both sides of (%
\ref{eps-v}) we obtain%
\begin{equation}
u_{\varepsilon }\geq h\phi \left( -\frac{1}{h}G^{\Omega }\left( h^{q}\left( 
\frac{u}{u_{\varepsilon }}\right) ^{q}V\right) \right) .  \label{uee}
\end{equation}
In this case $u$ can actually vanish inside $\Omega $. Letting $\varepsilon
\rightarrow 0$, we see that $\frac{u}{u_{\varepsilon }}\left( x\right)
\uparrow 1\ $if$\ u\left( x\right) >0$ and $\frac{u}{u_{\varepsilon }}=0\ $%
if $u\left( x\right) =0$, that is 
\begin{equation*}
\frac{u}{u_{\varepsilon }}\uparrow \chi _{u}\ \ \text{pointwise in }\Omega .
\end{equation*}%
Passing to the limit in (\ref{uee}) as $\varepsilon \rightarrow 0$ and using
the monotone convergence theorem gives 
\begin{equation}
u\geq h\phi \left( -\frac{1}{h}G^{\Omega }\left( \chi _{u}h^{q}V\right)
\right) \ \text{in }\Omega .  \label{huu}
\end{equation}%
which is equivalent to (\ref{main}).

Consider the last case $q<0$. We define for any $\varepsilon >0$ the
function $v_{\varepsilon }$ in a slightly different way as follows:%
\begin{equation*}
v_{\varepsilon }=\phi ^{-1}\left( \frac{u}{h_{\varepsilon }}\right) ,
\end{equation*}%
where $h_{\varepsilon }=h+\varepsilon $. Since $\frac{u}{h_{\varepsilon }}>0$
in $\Omega $, we obtain $v_{\varepsilon }\in C^{2}\left( \Omega \right) $.
The function%
\begin{equation}
\phi ^{-1}\left( s\right) =\frac{s^{1-q}-1}{1-q},  \label{fi-1}
\end{equation}%
initially defined for $s>0$, extends continuously to $s=0$ by setting $\phi
^{-1}\left( 0\right) =-\frac{1}{1-q}$. Since $\frac{u}{h_{\varepsilon }}$ is
continuous and nonnegative in $\overline{\Omega }$, we obtain $%
v_{\varepsilon }\in C\left( \overline{\Omega }\right) $. Moreover, since on
the boundary $\partial \Omega $ we have $u\leq h<h_{\varepsilon }$, it
follows that $v_{\varepsilon }\leq \phi ^{-1}\left( 1\right) =0$ and, hence, 
\begin{equation}
h_{\varepsilon }v_{\varepsilon }\leq 0\ \ \text{on\ \ }\partial \Omega .
\label{hee}
\end{equation}%
Since $\mathcal{L}h_{\varepsilon }\leq 0$ and $u=h_{\varepsilon }\phi \left(
v_{\varepsilon }\right) $ satisfies by (\ref{semilinearB}) 
\begin{equation*}
-\mathcal{L}u+Vu^{q}\leq -\mathcal{L}h_{\varepsilon },
\end{equation*}%
we obtain by inequality (\ref{Lh-}) of Lemma \ref{Cor} and (\ref{fi'}) that%
\begin{equation}
-\mathcal{L}(h_{\varepsilon }v_{\varepsilon })+h_{\varepsilon }^{q}\,V\leq
0\ \text{in }\Omega .  \label{Lhe}
\end{equation}%
Since $q<0$ and 
\begin{equation*}
G^{\Omega }(h_{\varepsilon }^{q}V_{\pm })\leq G^{\Omega }(h^{q}V_{\pm }),
\end{equation*}%
it follows that $G^{\Omega }(h_{\varepsilon }^{q}V)$ is well-defined. Hence,
we obtain from (\ref{Lhe}) and (\ref{hee}) by the maximum principle of Lemma %
\ref{Lemmax}, that 
\begin{equation*}
h_{\varepsilon }v_{\varepsilon }\leq -G^{\Omega }(h_{\varepsilon
}^{q}V)\quad \text{in}\,\,\Omega ,
\end{equation*}%
that is,%
\begin{equation}
v_{\varepsilon }\leq -\frac{G^{\Omega }(h_{\varepsilon }^{q}V)}{%
h_{\varepsilon }}\ \ \text{in }\Omega \text{.}  \label{est-q-neg2}
\end{equation}%
Since $v_{\varepsilon }\left( \Omega \right) \subset I_{q}=(-\frac{1}{1-q}%
,\infty )$, it follows that%
\begin{equation}
-\frac{G^{\Omega }(h_{\varepsilon }^{q}V)}{h_{\varepsilon }}\in (-\frac{1}{%
1-q},+\infty ]\subset \overline{I}_{q}.  \label{G0}
\end{equation}%
Applying $\phi $ to both sides of (\ref{est-q-neg2}), we obtain%
\begin{equation*}
\phi \left( v_{\varepsilon }\right) \leq \phi \left( -\frac{G^{\Omega
}(h_{\varepsilon }^{q}V)}{h_{\varepsilon }}\right) \ \ \text{in}\,\,\Omega ,
\end{equation*}%
which is equivalent to 
\begin{equation*}
u\leq h_{\varepsilon }\left[ 1-(1-q)\frac{G^{\Omega }(h_{\varepsilon }^{q}V)%
}{h_{\varepsilon }}\right] ^{\frac{1}{1-q}}\ \ \text{in }\Omega
\end{equation*}%
and, hence, to%
\begin{equation}
u\leq h_{\varepsilon }\left[ 1-(1-q)\,\frac{G^{\Omega }(h_{\varepsilon
}^{q}V_{+})}{h_{\varepsilon }}+(1-q)\,\frac{G^{\Omega }(h_{\varepsilon
}^{q}V_{-})}{h_{\varepsilon }}\right] ^{\frac{1}{1-q}}.  \label{uhe}
\end{equation}%
Note that the expression in the square brackets here belongs to $(0,+\infty
] $ by (\ref{G0}). In particular, we have $G^{\Omega }(h_{\varepsilon
}^{q}V_{+})<\infty $. Since $0<h<h_{\varepsilon }$ in $\Omega $ and $q<0$,
we see that in $\Omega $ 
\begin{equation}
\frac{G^{\Omega }(h_{\varepsilon }^{q}V_{-})}{h_{\varepsilon }}\leq \frac{%
G^{\Omega }(h^{q}V_{-})}{h}.  \label{VV}
\end{equation}
Since $h_{\varepsilon }^{q}\uparrow h^{q}$ as $\varepsilon \rightarrow 0$,
we obtain by the monotone convergence theorem, that 
\begin{equation}
G^{\Omega }(h_{\varepsilon }^{q}V_{+})\rightarrow G^{\Omega }(h^{q}V_{+})\ \ 
\text{pointwise in }\Omega \text{.}  \label{GG}
\end{equation}%
Since by hypothesis $G^{\Omega }(h^{q}V)$ is well-defined, we obtain as $%
\varepsilon \rightarrow 0$ from (\ref{uhe}), (\ref{VV}) and (\ref{GG}) that 
\begin{equation*}
u\leq h\left[ 1-(1-q)\,\frac{G^{\Omega }(h^{q}V)}{h}\right] ^{\frac{1}{1-q}%
}\ \text{in\ \ }\Omega .
\end{equation*}%
By construction the expression in the square brackets here belongs to $\left[
0,+\infty \right] $. Since by hypothesis $u>0$ in $\Omega $, we obtain that
this expression cannot vanish, which proves (\ref{main-cond}) in this case.
\end{proof}

\section{Proof of Theorem \protect\ref{T1}}

\label{SecT1}Consider first the case $q>0$. By hypothesis, the function $f$
is continuous and non-negative in $\Omega $. In the proof we need $f$ to be
locally H\"{o}lder continuous because in this case the function $G^{U}f$ is
of the class $C^{2}$ for any relatively compact domain $U\subset \Omega $.

Let us approximate a given continuous function $f$ in $\Omega $ from below
by a sequence $\left\{ f_{k}\right\} _{k=1}^{\infty }$ of $C^{1}$ functions $%
f_{k}$ so that 
\begin{equation}
f_{k}\uparrow f\ \ \text{as\ \ }k\rightarrow \infty  \label{muk}
\end{equation}%
pointwise. Replacing each $f_{k}$ by $\left( f_{k}\right) _{+}$, we obtain a
sequence $\left\{ f_{k}\right\} $ of nonnegative locally Lipschitz functions
satisfying (\ref{muk}).

Set $h_{k}=G^{\Omega }f_{k}$ and observe that $h_{k}\leq h<\infty $ and $%
h_{k}\uparrow h$ pointwise in $\Omega $ as $k\rightarrow \infty $. Since 
\begin{equation*}
G^{\Omega }\left( h_{k}^{q}V_{\pm }\right) \leq G^{\Omega }\left(
h^{q}V_{\pm }\right) ,
\end{equation*}%
we see that one of the values $G^{\Omega }\left( h_{k}^{q}V_{\pm }\right) $
is finite and, hence, $G^{\Omega }\left( h_{k}^{q}V\right) $ is
well-defined. Since 
\begin{equation*}
G^{\Omega }\left( h_{k}^{q}V_{\pm }\right) \rightarrow G^{\Omega }\left(
h^{q}V_{\pm }\right) ,
\end{equation*}%
we obtain that 
\begin{equation}
G^{\Omega }\left( h_{k}^{q}V\right) \rightarrow G^{\Omega }\left(
h^{q}V\right)  \label{Ghk}
\end{equation}%
pointwise in $\Omega $. The same is true for $G^{\Omega }\left( \chi
_{u}h_{k}^{q}V\right) $ in the case $\left( iii\right) $.

Since $f_{k}\leq f$, we obtain that $u$ satisfies $-\mathcal{L}u+Vu^{q}\geq
f_{k}$ in $\Omega $. Therefore, if statements $\left( i\right) ,\left(
ii\right) ,\left( iii\right) $ are already proved for locally Lipschitz
functions $f$, then we obtain the corresponding lower bounds (\ref{main-exp}%
), (\ref{main1}), (\ref{main}) of $u$ with $h_{k}$ in place of $h$. Letting $%
k\rightarrow \infty $ and using (\ref{Ghk}), we obtain the same estimates of 
$u$ via $h$ as claimed.

In the case $\left( ii\right) $ we still need to prove (\ref{main-cond}) for 
$h$ assuming that it is true with $h_{k}$ in place of $h$. Passing to the
limit as $k\rightarrow \infty $, we obtain a non-strict inequality%
\begin{equation}
-(q-1)G^{\Omega }(h^{q}V)(x)\leq h(x).  \label{mc}
\end{equation}%
However, estimate (\ref{main1}) implies that the expression in the square
brackets in (\ref{main1}) cannot vanish, which yields a strict inequality in
(\ref{mc}), that is, (\ref{main-cond}).

Continuing the proof in the case $q>0$, we can assume now that $f$ is
locally H\"{o}lder (even Lipschitz) continuous. Let $\left\{ \Omega
_{n}\right\} _{n=1}^{\infty }$ be an exhaustion of $\Omega $ by relatively
compact, connected, open sets $\Omega _{n}\Subset \Omega $ with smooth
boundaries. Set $h_{n}=G^{\Omega _{n}}f$. Since $f$ is locally H\"{o}lder
continuous and $\partial \Omega _{n}$ is regular, we have $h_{n}\in
C^{2}\left( \Omega _{n}\right) \cap C\left( \overline{\Omega }_{n}\right) $%
\textbf{\ }and%
\begin{equation*}
\begin{cases}
-\mathcal{L}h_{n}=f\quad & \text{in}\,\,\Omega _{n}, \\ 
h_{n}=0\quad & \text{on}\,\,\partial \Omega _{n}.%
\end{cases}%
\end{equation*}%
We can always take $n$ large enough so that $f\not\equiv 0$ in $\Omega _{n}$
and, hence, $0<h_{n}<\infty $ in $\Omega _{n}$.

Observe that by the monotone convergence theorem 
\begin{equation*}
h_{n}\uparrow h:=G^{\Omega }f\ \ \text{as }n\rightarrow \infty .
\end{equation*}%
Fix a point $x\in \Omega $ and let $n$ be so large that $x\in \Omega _{n}$.
Since $u$ satisfies (\ref{semilinear2A}) in $\Omega $, it follows that 
\begin{equation*}
\begin{cases}
-\mathcal{L}u+Vu^{q}\geq f=-\mathcal{L}h_{n}\quad & \text{in}\,\,\Omega _{n},
\\ 
u\geq 0=h_{n}\ \quad & \text{on}\,\,\partial \Omega _{n}.%
\end{cases}%
\end{equation*}%
Applying Theorem~\ref{T2} in $\Omega _{n}$ we obtain%
\begin{equation}
u(x)\geq 
\begin{cases}
\,\,h_{n}(x)e^{-\frac{G^{\Omega _{n}}(h_{n}V)(x)}{h_{n}(x)}},\quad & \text{if%
}\,\,q=1, \\ 
\,\,h_{n}(x)\left[ 1+(q-1)\frac{G^{\Omega _{n}}(h_{n}^{q}V)(x)}{h_{n}(x)}%
\right] ^{-\frac{1}{q-1}},\quad & \text{if}\,\,q>1, \\ 
h_{n}(x)\left[ 1+(q-1)\frac{G^{\Omega _{n}}(\chi _{{n}}h_{n}^{q}V)(x)}{%
h_{n}(x)}\right] _{+}^{-\frac{1}{q-1}}, & \text{if\ }0<q<1,%
\end{cases}
\label{omA}
\end{equation}%
where $\chi _{n}:=\chi _{u|_{\Omega _{n}}}$. Since $h_{n}^{q}\uparrow h^{q}$
as $n\rightarrow \infty $, we obtain by the monotone convergence theorem, 
\begin{equation}
\lim_{n\rightarrow \infty }G^{\Omega _{n}}(h_{n}^{q}V_{\pm })(x)=G^{\Omega
}(h^{q}V_{\pm })(x)  \label{mon-convthm}
\end{equation}%
(and a similar identity for the term with $\chi _{{n}}h_{n}^{q}V$). Passing
to the limit in (\ref{omA}) as $n\rightarrow \infty $, we arrive at 
\begin{equation}
u(x)\geq 
\begin{cases}
\,\,h(x)e^{-\frac{G^{\Omega }(hV)(x)}{h(x)}},\quad & \text{if}\,\,q=1, \\ 
\,\,h(x)\left[ 1+(q-1)\frac{G^{\Omega }(h^{q}V)(x)}{h(x)}\right] ^{-\frac{1}{%
q-1}},\quad & \text{if}\,\,q>1, \\ 
\,h(x)\left[ 1+(q-1)\frac{G^{\Omega }(\chi _{u}h^{q}V)(x)}{h(x)}\right]
_{+}^{-\frac{1}{q-1}}, & \text{if }0<q<1,%
\end{cases}
\label{omC}
\end{equation}%
which proves estimates (\ref{main1}), (\ref{main}), (\ref{main-negative}).

In the case $q>1$ the expression in square brackets in (\ref{omC}) is
non-negative as the limit of that of (\ref{omA}). However, since the
exponent $-\frac{1}{q-1}$ is in this case negative and $\frac{u\left(
x\right)}{h\left( x\right)} <\infty $, it actually has to be positive, which
proves (\ref{main-cond}).

Consider now the case $q<0.$ In this case we approximate $f$ from above by a
sequence of $C^{1}$ functions $f_{k}$ such that $f_{k}\downarrow f$ and set $%
h_{k}=G^{\Omega }f_{k}$. The function $f_{1}$ should be chosen so close to $%
f $ that $h_{1}<\infty $. Then $h_{k}\downarrow h$ pointwise in $\Omega $,
and, since $q<0$, we have $h_{k}^{q}\uparrow h^{q}$ as $k\rightarrow \infty $%
. The same argument as in the case $q>0$ shows that $G^{\Omega }\left(
h_{k}^{q}V\right) $ is well-defined and (\ref{Ghk}) holds. Since $f_{k}\geq
f $, the function $u$ satisfies in $\Omega $ the inequality $-\mathcal{L}%
u+Vu^{q}\leq f_{k}$. If $\left( iv\right) $ is already proved for locally H%
\"{o}lder continuous $f$, then we conclude that (\ref{main-negative}) holds
with $h_{k}$ instead of $h$. Letting $k\rightarrow \infty $, we complete the
proof (condition (\ref{main-cond}) is proved in the same way as in the case $%
q>0$).

Hence, we assume in what follows that $f$ is locally H\"{o}lder continuous.
In this case the proof goes the same way as in Theorem \ref{T2}. Observe
first that $G^{\Omega }f\in C^{2}\left( \Omega \right) $. Indeed, for any
relatively compact open set $\Omega ^{\prime }\subset \Omega $ with smooth
boundary it is known that $G^{\Omega ^{\prime }}f\in C^{2}\left( \Omega
^{\prime }\right) $. Since the difference $G^{\Omega }f-G^{\Omega ^{\prime
}}f$ is harmonic in $\Omega ^{\prime }$, it follows that it is smooth in $%
\Omega ^{\prime }$, which implies that $G^{\Omega }f\in C^{2}\left( \Omega
^{\prime }\right) $. By exhausting $\Omega $ with relatively compact open
subsets, we obtain $G^{\Omega }f\in C^{2}\left( \Omega \right) $ as claimed.

For any $\varepsilon >0$ set $h_{\varepsilon }=\varepsilon +G^{\Omega }f,$
so that $-\mathcal{L}h_{\varepsilon }=f$. Since $u,h_{\varepsilon }>0$ in $%
\Omega $, the function $v_{\varepsilon }=\phi ^{-1}\left( \frac{u}{%
h_{\varepsilon }}\right) $ belongs to $C^{2}\left( \Omega \right) $ and,
similarly to the proof of Theorem \ref{T2} (cf. (\ref{Lhe})), we obtain the
following inequality in $\Omega $%
\begin{equation*}
-\mathcal{L}\left( h_{\varepsilon }v_{\varepsilon }\right) +h_{\varepsilon
}^{q}V\leq 0.
\end{equation*}%
Note that in this case we have by (\ref{fi-1})%
\begin{equation*}
h_{\varepsilon }v_{\varepsilon }=h_{\varepsilon }\phi ^{-1}\left( \frac{u}{%
h_{\varepsilon }}\right) =h_{\varepsilon }^{q}\frac{u^{1-q}-h_{\varepsilon
}^{1-q}}{1-q}.
\end{equation*}%
Using the boundary condition in (\ref{semilinear2B}) and $h_{\varepsilon
}\geq \varepsilon $, we obtain%
\begin{equation*}
\limsup_{y\rightarrow \partial _{\infty }\Omega }\left( h_{\varepsilon
}v_{\varepsilon }\right) \left( y\right) \leq 0.
\end{equation*}%
Applying Lemma \ref{Lemmax} to $-h_{\varepsilon }v_{\varepsilon }$ we obtain%
\begin{equation*}
-h_{\varepsilon }v_{\varepsilon }\geq G^{\Omega }\left( h_{\varepsilon
}^{q}V\right) .
\end{equation*}%
Letting $\varepsilon \rightarrow 0$ and arguing as in the proof of Theorem %
\ref{T2}, we finish the proof.

\begin{remark}
\RM Note that (\ref{omA}) implies immediately the lower bounds of Theorem %
\ref{T1}$\left( i\right)$, $\left( ii\right)$, $\left( iii\right) $ by
passing to the limit as $n\rightarrow \infty $, provided we use a relaxed
definition of the expression $G^{\Omega }\left( h^{q}V\right) $ given by (%
\ref{improper}). A similar observation holds also for the upper estimate of $%
\left( iv\right) $.
\end{remark}

\section{Proof of Theorem \protect\ref{T3}}

\label{SecT3}The proof is similar to that of Theorem~\ref{T2}, but simpler.
Let $\left\{ \Omega _{n}\right\} $ be an exhaustion of $\Omega $ as above.

Assume first $q\geq 1$ and define for any $n$ a function $h_{n}\in
C^{2}\left( \Omega _{n}\right) \cap C\left( \overline{\Omega }\right) $ as
the solution of 
\begin{equation*}
\left\{ 
\begin{array}{ll}
\mathcal{L}h_{n}=0 & \text{in\ }\Omega _{n}, \\ 
h_{n}=u & \text{on }\partial \Omega _{n}.%
\end{array}%
\right.
\end{equation*}%
In cases $\left( i\right), \left( ii\right) $, we have $h_{n}>0$ in $\Omega
_{n}$ for large enough $n$ by (\ref{u=1-thm3}) and (\ref{u=infty-thm3})
respectively. \textbf{\ }By Theorem~\ref{T2} it follows that $u(x)>0$ for
all $x\in \Omega _{n}$. Consequently, $u(x)>0$ for all $x\in \Omega $.

In the case $q=1$, set $h\equiv 1$, $v=\ln u$. As in the proof of Theorem~%
\ref{T2} (cf. (\ref{ineq})), we obtain 
\begin{equation*}
-\mathcal{L}v+V\geq 0.
\end{equation*}%
Since by (\ref{u=1-thm3}) we have \thinspace $\liminf_{y\rightarrow \partial
_{\infty }\Omega }v\left( y\right) \geq 0$, we conclude by Lemma \ref{Lemmax}
that 
\begin{equation}
\ln u(x)=v(x)\geq -G^{\Omega }V(x),  \label{omega+}
\end{equation}%
which proves (\ref{est1-thm3}).

In the case $q>1$, we set $\nu _{n}=\inf_{\partial \Omega _{n}}u$, where by (%
\ref{u=infty-thm3}) we can assume $\lim_{n\rightarrow \infty }\nu
_{n}=+\infty $. Then by Theorem~\ref{T2} with $h\equiv \nu _{n}$, we obtain
in $\Omega _{n}$ 
\begin{eqnarray}
u &\geq &\nu _{n}\left[ 1+(q-1)\nu _{n}^{q-1}G^{\Omega _{n}}V\right] ^{-%
\frac{1}{q-1}}  \notag \\
&=&\left[ \nu _{n}^{-\left( q-1\right) }+(q-1)G^{\Omega _{n}}V\right] ^{-%
\frac{1}{q-1}}  \label{est5-thm3}
\end{eqnarray}%
where 
\begin{equation}
-(q-1)G^{\Omega _{n}}V<\nu _{n}^{-\left( q-1\right) }\quad \mathrm{in}%
\,\,\Omega _{n}.  \label{-q}
\end{equation}%
It follows from (\ref{-q}) that $G^{\Omega }V_{-}(x)\not=+\infty $, since
otherwise both $G^{\Omega }V_{\pm }(x)=+\infty $. Hence, by letting $%
n\rightarrow +\infty $ in (\ref{-q}), we see that $G^{\Omega }V\left(
x\right) \geq 0,$ and consequently by the monotone convergence theorem (\ref%
{est5-thm3}) yields 
\begin{equation*}
u\left( x\right) \geq \left[ (q-1)G^{\Omega }V\left( x\right) \right] ^{-%
\frac{1}{q-1}}.
\end{equation*}%
Since $u(x)<\infty $, we actually have a strict inequality $G^{\Omega
}V(x)>0 $.

Consider now the case $0<q<1$. We set 
\begin{equation*}
\phi (v)=\left[ (1-q)v\right] ^{\frac{1}{1-q}},\quad v\in I_{q}=(0,+\infty ).
\end{equation*}%
Then clearly 
\begin{equation*}
\phi ^{\prime }(v)=\left[ (1-q)v\right] ^{\frac{q}{1-q}}>0,\quad \phi
^{\prime \prime }(v)=q\left[ (1-q)v\right] ^{\frac{2q-1}{1-q}}>0,
\end{equation*}%
and (\ref{fi'}) holds. For a sequence $\varepsilon _{n}\downarrow 0$, we set 
$u_{n}=u+\varepsilon _{n}$, and define $v_{n}$ by 
\begin{equation*}
v_{n}=\phi ^{-1}(u_{n}),\quad n=1,2,\ldots .
\end{equation*}%
Using Lemma~\ref{Cor} in the case $h\equiv 1$ so that $\mathcal{L}h=0$ (in
this case the condition $\phi (0)=1$ in (\ref{conv-incr}) is not required,
see Remark after the proof of Lemma \ref{Cor}), we obtain as in the proof of
Theorem~\ref{T2}, 
\begin{equation*}
-\mathcal{L}v_{n}+\left( \frac{u}{u_{n}}\right) ^{q}V\geq 0.
\end{equation*}%
Since $v_{n}>0$ on $\partial \Omega _{n}$, it follows from the maximum
principle%
\begin{equation}
v_{n}\geq -G^{\Omega _{n}}\left( \left( \frac{u}{u_{n}}\right) ^{q}V\right)
\quad \text{in}\,\,\Omega _{n}.  \label{lower-thm3}
\end{equation}%
As $n\rightarrow \infty $ we obtain $v_{n}\rightarrow \phi ^{-1}(u)$, and 
\begin{equation*}
\lim_{n\rightarrow \infty }G^{\Omega _{n}}\left( \left( \frac{u}{u_{n}}%
\right) ^{q}V_{\pm }\right) =G^{\Omega }\left( \chi _{\Omega ^{+}}V_{\pm
}\right)
\end{equation*}%
by the monotone convergence theorem. Passing to the limit in (\ref%
{lower-thm3}) as $n\rightarrow \infty $ gives 
\begin{equation*}
\phi ^{-1}(u)\geq -G^{\Omega }\left( \chi _{\Omega ^{+}}V\right) ,
\end{equation*}%
which is equivalent to (\ref{est2-thm3}).

Finally, let $q<0$. We argue as in the case $q>1$, setting $\nu
_{n}=\inf_{\partial \Omega _{n}}u$ where in view of (\ref{u=0-thm3}) we can
assume $\lim_{n\rightarrow \infty }\nu _{n}=0$. Then by Theorem~\ref{T2}
with $h\equiv \nu _{n}$, 
\begin{equation}
u(x)\leq \left[ \nu _{n}^{1-q} - (1-q)G^{\Omega _{n}}V (x)\right] ^{\frac{1}{%
1-q}}\quad \mathrm{in}\,\,\Omega _{n},  \label{est7-thm3}
\end{equation}%
where 
\begin{equation*}
(1-q)\,G^{\Omega _{n}}V(x)<\nu _{n}^{1-q}\quad \mathrm{in}\,\,\Omega _{n}.
\end{equation*}%
It follows as in the case $q>1$ that $GV_{+}(x)\not=+\infty $, and $%
GV_{+}(x)\leq GV_{-}(x)$. Letting $n\rightarrow +\infty $ in (\ref{est7-thm3}%
), we deduce (\ref{est4-thm3}), which yields the strict inequality $%
GV_{+}(x)<GV_{-}(x)$, since $u(x)>0$.

\section{Proof of Theorems \protect\ref{C4} and \protect\ref{T4}}

\label{SecProofE}

\begin{proof}[Proof of Theorem~\protect\ref{C4}]
We prove only statement $(ii)$ (for $q<0$) since statement $(i)$ (for $q>1$)
is proved in a similar but simpler way. We use the method of sub- and
super-solutions, understood in the classical sense: if there exist $%
\underline{u}$, $\overline{u}\in C(\overline{\Omega })\cap C^{2}(\Omega )$
such that $0<\underline{u}\leq \overline{u}$ in $\Omega $, $\underline{u}=%
\overline{u}=0$ on $\partial \Omega $, and 
\begin{equation*}
-\mathcal{L}\underline{u}+V\underline{u}^{q}\leq f,\qquad -\mathcal{L}%
\overline{u}+V\overline{u}^{q}\geq f\ \ \text{in}\,\,\Omega ,
\end{equation*}%
then there exists a solution $u\in C(\overline{\Omega })\cap C^{2}(\Omega )$
to (\ref{diff}) such that $\underline{u}\leq u\leq \overline{u}$. (See \cite%
{DGR}, Theorem 1.2.3, in the case $M=\mathbb{R}^{n}$ and $\mathcal{L}=\Delta 
$; the same proof which relies on standard interior regularity estimates
works in the general case.)

Clearly, setting $\overline{u}=h=G^{\Omega }f\in C(\overline{\Omega })\cap
C^{2}(\Omega )$ gives a supersolution since $V\geq 0$, and consequently 
\begin{equation*}
-\mathcal{L}\overline{u}+V\overline{u}^{q}\geq -\mathcal{L}\overline{u}%
=f,\quad \overline{u}=0\,\,\text{on}\,\,\partial \Omega .
\end{equation*}

The main problem is to find a subsolution which we define by 
\begin{equation*}
\underline{u}=h-\lambda ^{q}\,G^{\Omega }(h^{q}V),
\end{equation*}%
where $\lambda >0$ is a constant to be determined later. Using (\ref%
{cond2-corr}) we see that $\underline{u}>0$ provided 
\begin{equation}
\left( 1-\frac{1}{q}\right) ^{q}\frac{1}{1-q}<\lambda ^{-q}.
\label{lambda-C4}
\end{equation}

Under the assumptions imposed on $f$ it follows that $h\in C(\overline{%
\Omega })\cap C^{2}(\Omega )$. We need to show that $\underline{u}\in C(%
\overline{\Omega })\cap C^{2}(\Omega )$. As in the proof of Theorem~\ref{T1} 
$(iv)$, let $\Omega ^{\prime }$ be an arbitrary relatively compact subset of 
$\Omega $ with smooth boundary. Then $G^{\Omega }(h^{q}V)-G^{\Omega^{\prime
}}(h^{q}V)$ is a harmonic function in $\Omega^{\prime }$. Since $h>0$ in $%
\Omega^{\prime }$, it follows that $h^{q}V\in C(\overline{\Omega })$ and is
locally H\"{o}lder-continuous. Hence, $G^{\Omega^{\prime }}(h^{q}V)\in
C^{2}(\Omega ^{\prime })$, and consequently $G^{\Omega }(h^{q}V)\in
C^{2}(\Omega ^{\prime })$ as well. To show that $G^{\Omega }(h^{q}V)\in C(%
\overline{\Omega })$, notice that $h$ vanishes continuously on $\partial
\Omega $. Using (\ref{cond2-corr}), we deduce that the same is true for $%
G^{\Omega }(h^{q}V)$.

It remains to show that $-\mathcal{L}\underline{u}+V\underline{u}^{q}\leq f$%
. Since $q<0$ and hence $\underline{u}^{q}\geq h^{q}$, it follows 
\begin{equation*}
-\mathcal{L}\underline{u}+V\underline{u}^{q}=f-\lambda ^{q}\,h^{q}V+%
\underline{u}^{q}V\leq f,
\end{equation*}%
provided 
\begin{equation*}
\lambda \,h\leq \underline{u}=h-\lambda ^{q}\,G^{\Omega }(h^{q}V),
\end{equation*}%
or equivalently, 
\begin{equation*}
G^{\Omega }(h^{q}V)\leq \lambda ^{-q}(1-\lambda )h.
\end{equation*}%
Optimizing over all $\lambda \in (0,1)$, we obtain that the maximum of the
right-hand side is obtained for $\lambda =\frac{1}{1-\frac{1}{q}}$, which
coincides with condition (\ref{cond2-corr}), 
\begin{equation*}
G^{\Omega }(h^{q}V)\leq \Big(1-\frac{1}{q}\Big)^{q}\frac{1}{1-q}\,h.
\end{equation*}%
Notice that (\ref{lambda-C4}) obviously holds with this choice of $\lambda $
as well. Thus, $\underline{u}$ is a classical subsolution which is positive
in $\Omega $, and $\underline{u}\leq \overline{u}$ as desired. Consequently,
there exists a classical solution $u$ such that $\underline{u}\leq u\leq 
\overline{u}$. Moreover, 
\begin{equation*}
u\geq \underline{u}=h-\lambda ^{q}G^{\Omega }(h^{q}V)=h-\Big(1-\frac{1}{q}%
\Big)^{-q}G^{\Omega }(h^{q}V)\geq \frac{1}{1-\frac{1}{q}}\,h,
\end{equation*}%
which proves the lower bound for $u$ in (\ref{uh-}).

The upper bound was obtained above in Theorem~\ref{T1}$(iv)$.
\end{proof}

\begin{proof}[Proof of Theorem~\protect\ref{T4}]
The case $q>1$, $V\leq 0$ is considered in \cite{KV} and \cite{BC} (see also 
\cite{V}), so we give a proof only in the case $q<0$, $V\geq 0$. Let us
assume that 
\begin{equation}
K(h^{q}V)(x)\leq a\,h(x)\quad dm-\mathrm{a.e.}\,\,\mathrm{in}\,\,\Omega ,
\label{condV++}
\end{equation}%
for some constant $a>0$, where $h$ satisfies (\ref{cond-h}).

Set $u_{0}=h$, and construct a sequence of consecutive iterations $u_{k}$ by 
\begin{equation*}
u_{k+1}+K(u_{k}^{q}V)=h,\quad k=0,1,2,\ldots .
\end{equation*}%
Clearly, by (\ref{condV++}), 
\begin{equation*}
(1-a)h(x)\leq u_{1}(x)=h(x)-K(h^{q}V)(x)\leq h(x)=u_{0}(x).
\end{equation*}%
We set $b_{0}=1$, $b_{1}=1-a$, and continue the argument by induction.
Suppose that for some $k=1,2,\ldots $ 
\begin{equation}
b_{k}\,h(x)\leq u_{k}(x)\leq u_{k-1}(x)\quad \mathrm{in}\,\,\Omega .
\label{induct}
\end{equation}%
Since $q<0$ and $V\geq 0$, we deduce using estimates (\ref{condV++}) and (%
\ref{induct}), 
\begin{equation*}
(1-a\,b_{k}^{q})\,h(x)\leq h(x)-b_{k}^{q}\,K(h^{q}V)(x)\leq
h(x)-K(u_{k}^{q}V)(x)=u_{k+1}(x).
\end{equation*}%
On the other hand, 
\begin{equation*}
u_{k+1}(x)=h(x)-K(u_{k}^{q}V)(x)\leq h(x)-K(u_{k-1}^{q}V)(x)=u_{k}(x).
\end{equation*}%
Hence, 
\begin{equation*}
b_{k+1}\,h(x)\leq u_{k+1}(x)\leq u_{k}(x),\quad \mathrm{where}%
\,\,b_{k+1}=1-a\,b_{k}^{q}.
\end{equation*}%
We need to pick $a>0$ small enough, so that $b_{k}\downarrow b$, where $b>0$%
, and $b=1-a\,b^{q}$.

In other words, we are solving the equation 
\begin{equation}
\frac{1-x}{a}=x^{q}  \label{element}
\end{equation}%
by consecutive iterations $b_{k+1}=1-ab_{k}^{q}$ starting from the initial
value $b_{0}=1$. Clearly, this equation has a solution $0<x<1$ if and only
if $0<a\leq a_{\ast }$, where $y=\frac{1-x}{a_{\ast }}$ is the tangent line
to the convex curve $y=x^{q}$. Here the optimal value $a_{\ast }$ is found
by equating the derivatives, and solving the system of equations 
\begin{equation*}
x_{\ast }^{q}=\frac{1-x_{\ast }}{a},\quad qx_{\ast }^{q-1}=-\frac{1}{a_{\ast
}},
\end{equation*}%
which gives 
\begin{equation*}
a_{\ast }=\Big (1-\frac{1}{q}\Big)^{q}\frac{1}{1-q},\quad x_{\ast }=\frac{1}{%
1-\frac{1}{q}}.
\end{equation*}

Letting $a=a_{\ast }$, we see that by the convexity of $y=x^{q}$, (\ref%
{element}) has a unique solution $x_{\ast }=\frac{1}{1-\frac{1}{q}}$, and by
induction, $x_{\ast }<b_{k+1}<b_{k}<1$, so that 
\begin{equation*}
b_{k}\downarrow b=x_{\ast }=\frac{1}{1-\frac{1}{q}}>0.
\end{equation*}%
%
%
%
%
%
%
%
%
%
%
%
%
%
%
%
%
%
%
%
%
%
%
%
%

From this it follows that (\ref{induct}) holds for all $k=1,2,\ldots $.
Passing to the limit as $k\rightarrow \infty $, and using the monotone
convergence theorem shows that $u=\lim_{k\rightarrow \infty }u_{k}$ is a
solution of (\ref{int}) such that 
\begin{equation*}
b\,h(x)\leq u(x)\leq u_{0}(x)=h(x).
\end{equation*}

Moreover, it is easy to see by construction that $u$ is a maximal solution,
that is, if $\tilde{u}$ is another nonnegative solution to (\ref{int}), then 
$\tilde{u}\leq u_{k}$ for every $k=0,1,2,\ldots $, and consequently $\tilde{u%
}\leq u$ in $\Omega $.
\end{proof}

\section{Examples}

\label{examples}In this section, we consider several examples which
demonstrate various phenomena that may affect behavior of solutions to the
equations considered above. In the liner case $q=1$ (Schr\"{o}dinger
equations), many examples concerning possible behavior of Green's functions
on domains and manifolds for $V\geq 0$ are given in \cite{GH}; the case $%
V\leq 0$ is considered in \cite{FV} and \cite{FNV} (see also \cite{CZ}, \cite%
{GH1}, \cite{Mu}, \cite{P}). In the superlinear case for $q>1$ and $V\geq 0$
we refer to \cite{BC} and \cite{KV} for existence results as well as
pointwise estimates of solutions, and many examples. The case $q>1$ and $%
V\leq 0$ (equations with absorption) is studied in \cite{MV}. In the
sublinear case $0<q<1$, existence of bounded positive solutions, along with
uniqueness, and pointwise estimates of bounded solutions on $\mathbb{R}^{n}$
were obtained in \cite{BK}. Recently, sharp existence results and matching
two-sided estimates for weak positive solutions (not necessarily bounded) in 
$\mathbb{R}^{n}$ were given in \cite{DV1}; see also \cite{DV2}\textbf{\ }for
a characterization of finite energy solutions.

Here we give an example involving a rapidly oscillating $V$ in the case $q=1$%
, and also illustrate various phenomena with regards to pointwise behavior
of solutions in the less studied case $q<0$, for both $V \ge 0$ and $V<0$.
(Related results for $q<0$ where obtained in \cite{AAC}, \cite{DGR}, \cite%
{Gh} and \cite{GhR}.)

\noindent \textbf{Example 1.} We consider first the linear case $q=1$ in
Theorem~\ref{T1}: 
\begin{equation}
-u^{\prime \prime }+V\,u=f\quad \text{in}\,\,\Omega ,  \label{ex-eq1}
\end{equation}%
for $\Omega =(0,1)$, $M=\mathbb{R}^{1}$. Let $f=1$, and $h=Gf=\frac{1}{2}%
x(1-x)$. The corresponding Green function is $G(x,y)=\min (x(1-y),y(1-x))$.

We start with a positive solution with zero boundary values to (\ref{ex-eq1}%
), 
\begin{equation}
u(x)=x(1-x)\Big (1+x\sin \Big(\frac{\pi }{x^{\alpha }}\Big)\Big),\qquad x\in
(0,1),\quad \alpha >0.  \label{ex-eq2}
\end{equation}%
Then 
\begin{equation}
u^{\prime }(x)=x(1-x)\Big (1+x\sin \Big(\frac{\pi }{x^{\alpha }}\Big)\Big)%
,\qquad x\in (0,1),\quad \alpha >0.  \label{ex-eq2a}
\end{equation}%
The corresponding $V=\frac{u^{\prime \prime }+1}{u}$ is found from (\ref%
{ex-eq1}), 
\begin{equation}
V=V_{1}+V_{2}+V_{3},  \label{ex-eq3}
\end{equation}%
where 
\begin{equation*}
\begin{aligned} V_{1}(x) &=-\frac{\alpha ^{2}\pi ^{2}x^{-2\alpha -1}\sin
(\frac{\pi }{x^{\alpha }})}{1+x\sin (\frac{\pi }{x^{\alpha }})}, \\ V_{2}(x)
&=\frac{\alpha (\alpha -1)(1-2x)\pi x^{-\alpha -1}\cos (\frac{\pi
}{x^{\alpha }})-\alpha \pi (1-2x)x^{-\alpha }\cos (\frac{\pi }{x^{\alpha
}})}{1+x\sin (\frac{\pi }{x^{\alpha }})}, \\ V_{3}(x) &=\frac{(1-2x)\sin
(\frac{\pi }{x^{\alpha }})}{1+x\sin (\frac{\pi }{x^{\alpha }})}-\frac{2\sin
(\frac{\pi }{x^{\alpha }})}{(1-x)(1+x\sin (\frac{\pi }{x^{\alpha }}))}.
\end{aligned}
\end{equation*}

Thus, $V$ has a highly oscillatory behavior at the end-point $x=0$, where $%
V_{1}$ is the leading term. Nevertheless, due to the cancellation
phenomenon, we have $u\simeq h$. %
%

For $0<\alpha <1$, $G(hV)$ is well-defined, and Theorem~\ref{T1} gives the
lower bound 
\begin{equation}
u\geq h\,e^{-\frac{G(hV)}{h}},  \label{ex-eq4}
\end{equation}%
which is sharp since $\frac{G(hV)}{h}$ is a bounded function on $\Omega $.
Indeed, it is easy to see that the term $G(hV_{3})$ is harmless since $%
hV_{3} $ is bounded in $\Omega $, and hence $G(hV_{3})\simeq h$ at the
endpoints. To estimate $G(hV_{2})$, notice that $|V_{2}(x)|\leq Cx^{-\alpha
-1}$, and consequently by direct estimates 
\begin{equation}
G(h|V_{2}|)(x)=O(x)\quad \text{as}\,\,x\rightarrow 0^{+}.  \label{ex-eq5}
\end{equation}%
It remains to notice that due to cancellation, for $0<\alpha <1$, 
\begin{equation}
G(hV_{1})(x)=O(x)\quad \text{as}\,\,x\rightarrow 0^{+}  \label{ex-eq6}
\end{equation}%
as well. This can be verified by looking at the asymptotics of the integrals
in the expression 
\begin{equation}
G(hV_{1})(x)=(1-x)\int_{0}^{x}\frac{y^{2}(1-y)}{2}V_{1}(y)dy+x\int_{x}^{1}%
\frac{y(1-y)^{2}}{2}V_{1}(y)dy.  \label{ex-eq7}
\end{equation}%
Clearly, $G(hV_{1})(x)\simeq 1-x$ as $x\rightarrow 1^{-}$. For $0<\alpha <1$%
, it is not difficult to see using integration by parts that $%
G(hV_{1})(x)\simeq x$ as $x\rightarrow 0^{+}$; we omit the details here.

If $\alpha =1$, then $G(hV)$ is not well-defined, and the first term on the
right-hand side of (\ref{ex-eq7}) has to be understood as an improper
integral which asymptotically behaves like $x$ as $x\rightarrow 0^{+}$.
However, the second term actually has an extra logarithmic factor, so that 
\begin{equation*}
G(hV)\simeq \,x\log \Big(\frac{1}{x}\Big)\quad \text{as}\,\,x\rightarrow
0^{+}.
\end{equation*}%
This shows that the lower bound $u(x)\geq h\,e^{-\frac{G(hV)}{h}}$ is not
sharp in this case.

\bigskip

\noindent \textbf{Example 2.} Let $q<0$, and let $\Omega $ be a bounded
domain with smooth boundary in $\mathbb{R}^{n}$. Consider inequality (\ref%
{semilinear2B}) with $\mathcal{L}=\Delta $, $f\equiv 1$, and 
\begin{equation*}
V(x)=\frac{\lambda }{d_{\Omega }(x)^{\beta }},\quad x\in \Omega ,\quad
\lambda >0,\,\,\beta >0,
\end{equation*}%
and the corresponding equation 
\begin{equation}
-\Delta u+\frac{\lambda }{d_{\Omega }(x)^{\beta }}u^{q}=1,\quad u>0\quad 
\mathrm{in}\,\,\Omega .  \label{ex-eq8b}
\end{equation}%
We set 
\begin{equation}
h(x)=G^{\Omega }f(x)\simeq d_{\Omega }(x),\quad x\in \Omega .
\label{ex-eq8a}
\end{equation}

Theorem~\ref{T1} $\left(iv\right)$ gives the following necessary condition, 
\begin{equation}  \label{bound-ex2}
(1-q) \frac{G^\Omega (h^q V)(x)}{h(x)} < 1,
\end{equation}
for the existence of a positive solution $u$ to (\ref{semilinear2B}) with
zero boundary values.

It is easy to see via direct estimates of the Green kernel that, for $\beta
\geq 2+q$, we have $G^{\Omega }(h^{q}V)\equiv +\infty $. For $1+q<\beta <2+q$%
, 
\begin{equation*}
\frac{G^{\Omega}(h^{q}V)(x)}{h(x)}\simeq d_{\Omega }(x)^{1+q-\beta },\quad
x\in \Omega ,
\end{equation*}%
For $\beta =1+q$, we have 
\begin{equation*}
\frac{G^{\Omega}(h^{q}V)(x)}{h(x)}\simeq \log \frac{A}{d_{\Omega }(x)},\quad
x\in \Omega ,
\end{equation*}%
where $A=2\,\mathrm{diam}(\Omega )$. Hence, for $\beta \ge 1+q$, condition (%
\ref{bound-ex2}) fails, and (\ref{semilinear2B}) has no positive solutions $%
u \in C^2(\Omega)\cap C(\overline{\Omega})$ with zero boundary values. This
non-existence result was proved earlier in \cite{DGR}, Theorem 2.1.

In the case $0 < \beta<1+q$, direct estimates give 
\begin{equation}  \label{c lambda}
(1-q) \frac{G^\Omega (h^q V)(x)}{h(x)} \le c \, \lambda,
\end{equation}
where $c=c(\Omega, q, \beta)$ is a positive constant.

Theorem~\ref{T1} $\left(iv\right) $ implies that if (\ref{semilinear2B}) has
a solution $u$ with zero boundary values, then actually (\ref{c lambda})
holds with $c \lambda < 1$, and 
\begin{equation*}
u(x)\leq h(x) \Big[1- (1-q) \frac{G^\Omega (h^q V)(x)}{h(x)} \Big]^{\frac{1}{%
1-q}},\quad x\in \Omega ,
\end{equation*}%
by estimate (\ref{main-negative}).

Moreover, if (\ref{c lambda}) holds with $c\,\lambda \leq \left( 1-\frac{1}{q%
}\right) ^{q}$, then by Theorem \ref{C4} there exists a solution $\widetilde{%
u}$ to (\ref{ex-eq8b}) with zero boundary values which satisfies the lower
bound 
\begin{equation*}
\widetilde{u}(x)\geq \frac{1}{1-\frac{1}{q}}h(x),\quad x\in \Omega .
\end{equation*}%
Hence, $\widetilde{u}(x)\simeq d_{\Omega }(x)$, and our general upper bound (%
\ref{main-negative}) is sharp in this case as well.

In Example 4, we will demonstrate that due to a non-uniqueness phenomenon,
equations of the type (\ref{ex-eq8b}) may have other solutions which violate
the lower bound $u (x)\ge c \, d_{\Omega}(x)$.

\bigskip \noindent \textbf{Example 3.} Let $q<0$, and let $\Omega $ be a
bounded smooth domain in $\mathbb{R}^{n}$. We consider (\ref{semilinear2B})
with $f\equiv 1$, and 
\begin{equation*}
V(x)=-\frac{1}{d_{\Omega }(x)^{\beta }},\qquad \beta >0,
\end{equation*}%
where $d_{\Omega }(x)=\limfunc{dist}(x,\partial \Omega )$, together with the
corresponding equation 
\begin{equation}
-\Delta u-\frac{1}{d_{\Omega }(x)^{\beta }}u^{q}=1,\quad u>0\quad \mathrm{in}%
\,\,\Omega .  \label{ex-eq9}
\end{equation}%
As in the previous example, set 
\begin{equation}
h(x)=G^{\Omega }f(x)\simeq d_{\Omega }(x),\quad x\in \Omega ,  \label{ex-eq8}
\end{equation}%
and $A=2\,\mathrm{diam}(\Omega )$.

Theorem~\ref{T1} $\left( iv\right) $ gives the following upper bounds for
all positive solutions $u\in C^{2}(\Omega )\cap C(\overline{\Omega })$ to (%
\ref{semilinear2B}) with zero boundary values: for all $x\in \Omega $,
\smallskip

$(a)$ $u(x)\leq C\,d_{\Omega }(x)$ if $0<\beta <1+q$;

$(b)$ $u(x)\leq C\,d_{\Omega }(x)\,\log ^{\frac{1}{1-q}}\!\Big(\frac{A}{%
d_{\Omega }(x)}\Big)$ if $\beta =1+q$;

$(c)$ $u(x)\leq C\,d_{\Omega }(x)^{\frac{2-\beta }{1-q}}$ if $1+q<\beta <2+q$%
. \smallskip

The corresponding lower bounds for positive \textit{super-solutions}, not
necessarily with zero boundary values, were established in \cite{Gh},
Proposition 2.6 (see also \cite{DGR}, Theorem 3.5): if $u \in
C^2(\Omega)\cap C(\overline{\Omega})$, and 
\begin{equation}  \label{ex-eq8c}
-\Delta u - \frac{1}{d_\Omega^{\beta}} u^q \ge 0, \quad u>0 \quad \mathrm{in}
\, \, \Omega,
\end{equation}
then, for all $x \in \Omega$, \smallskip

$(a^{\prime })$ $u(x)\geq c\,d_{\Omega }(x)$ if $0<\beta <1+q$;

$(b^{\prime })$ $u(x)\geq c\,d_{\Omega }(x)\,\log ^{\frac{1}{1-q}}\!\Big(%
\frac{A}{d_{\Omega }(x)}\Big)$ if $\beta =1+q$;

$(c^{\prime })$ $u(x)\geq c\,d_{\Omega }(x)^{\frac{2-\beta }{1-q}}$ if $%
1+q<\beta <2$.\smallskip

There are no positive solutions $u$ to (\ref{ex-eq8c}) in the case $\beta\ge
2$. For $0<\beta<2$, there exists a solution $u \in C^2(\Omega)\cap C(%
\overline{\Omega})$ with zero boundary values to equation (\ref{ex-eq9})
which satisfies both the upper and lower bounds given above.

Thus, our general upper bound (\ref{main-negative}) in Theorem~\ref{T1} $%
\left( iv\right) $ is sharp in all cases, except for $2+q \le \beta <2$,
where $G(h^q V)\equiv -\infty$, so that (\ref{main-negative}) becomes
trivial.

\bigskip \noindent \textbf{Example 4.} In this example, we encounter the
non-uniqueness phenomenon for classical solutions with zero boundary
conditions to semilinear equations with negative exponents $q<0$, where
obviously our estimates are not expected to be sharp for all solutions. For
simplicity, we consider the one-dimensional case, although similar examples
are easy to construct in higher dimensions, with coefficients $V$ depending
only on $d_{\Omega }(x)$.

Consider the following semilinear equation: 
\begin{equation}
-u^{\prime \prime }+V\,u^{q}=f\quad \text{in}\,\,\Omega ,  \label{ex4-eq1}
\end{equation}%
for $q<0$, $\Omega =(-1,1)$, with zero boundary conditions $u(\pm 1)=0$. Set 
$f\equiv 1$ and 
\begin{equation*}
h=Gf=\frac{1}{2}(1-x^{2}).
\end{equation*}%
The corresponding Green function is 
\begin{equation*}
G(x,y)=\min \Big((x+1)(1-y),(y+1)(1-x)\Big).
\end{equation*}

Consider a positive solution with zero boundary values to (\ref{ex4-eq1})
given by 
\begin{equation}  \label{ex4-eq2}
u(x) = \lambda \, (1-x^2)^{\gamma}, \qquad x \in (0, 1), \quad \lambda>0,
\quad \gamma>0.
\end{equation}
Then the corresponding $V=\frac{u^{\prime \prime }+1}{u^q}$ is found from (%
\ref{ex4-eq1}), 
\begin{equation*}
V = V_1+V_2 + V_3,
\end{equation*}
where 
\begin{equation*}
\begin{aligned} V_1(x) &= 4 \lambda^{1-q} \, \gamma (\gamma-1)
(1-x^2)^{\gamma-2-\gamma q}, \\ V_2(x) &= - 2 \lambda^{1-q} \, \gamma
(2\gamma-1) (1-x^2)^{\gamma-1-\gamma q},\\ V_3 (x) &=
\lambda^{-q}(1-x^2)^{-\gamma q}. \end{aligned}
\end{equation*}

In the case $\gamma =1$, clearly, $V_1\equiv 0$, and 
\begin{equation*}
V(x)=\lambda ^{-q} (1-2\lambda ) (1-x^{2})^{-q}.
\end{equation*}%
Then 
\begin{equation*}
\frac{G(h^{q}V)(x)}{h(x)}=(2\lambda )^{-q} (1-2\lambda ),\quad x\in \Omega .
\end{equation*}%
Our estimate (\ref{main-negative}) is sharp in both cases, $V\leq 0$ ($%
\lambda \ge \frac{1}{2}$), and $V\geq 0$ ($0<\lambda <\frac{1}{2}$): 
\begin{equation*}
u(x)\leq \frac{1-x^{2}}{2}\Big[1-(1-q)(2\lambda )^{-q}(1-2\lambda )\Big]^{%
\frac{1}{1-q}},
\end{equation*}%
where the constant in square brackets is positive for any choice of $%
\lambda>0$, $q<0$.

In the case $\gamma \not=1$ the situation is more complicated. Clearly, $V_1$
is now the most singular term.

For $\gamma >1$, the behavior of the solution $u$ given by (\ref{ex4-eq2})
at the end-points $x=\pm 1$ is too good to be captured by the upper estimate
(\ref{main-negative}); obviously, it is not sharp for this particular $u$.
On the other hand, notice that $V>0$ if $2\lambda \gamma <1$; for $\gamma>1$%
, it is easy to see by direct estimates that 
\begin{equation}  \label{C-lambda-q}
\frac{G(h^{q}V)(x)}{h(x)}\le C \lambda^{-q}, \quad x\in \Omega .
\end{equation}%
Since there exists a positive solution, Theorem~\ref{T1} ($iv$) implies that
actually (\ref{C-lambda-q}) holds with $C \lambda^{-q}< \frac{1}{1-q}$.

For $1<\gamma <\frac{1}{2\lambda }$, which ensures that $V>0$, every
positive solution $u$ with zero boundary values obviously satisfies the
upper bound 
\begin{equation*}
u\leq h\quad \mathrm{in}\,\,\Omega .
\end{equation*}%
Moreover, if (\ref{C-lambda-q}) holds with $C\lambda ^{-q}\leq \left( 1-%
\frac{1}{q}\right) ^{q}\frac{1}{1-q}$, then by Theorem~\ref{C4} equation (%
\ref{ex4-eq1}) has a solution $\widetilde{u}$ such that $\widetilde{u}\simeq
h$, for which the upper bound (\ref{main-negative}) is indeed sharp.

If $0<\gamma \leq -\frac{q}{1-q}$, then $V$ is too singular at the
end-points, so that $G(h^{q}V)\equiv+\infty$, and (\ref{main-negative})
trivializes.

In the remaining case $-\frac{q}{1-q}<\gamma <1$, it is easy to see that 
\begin{equation*}
\frac{G(h^{q}V)(x)}{h(x)}\simeq (1-x^{2})^{-q+\gamma -\gamma q-1},\quad x\in
\Omega ,
\end{equation*}%
which blows up as $x \to \pm1$. In this case, (\ref{main-negative}) gives $%
u(x)\leq c(1-x^{2})^{\gamma }$, which is again sharp.

\bigskip

\end{document}